\newcommand{\rank}{{\rm rank}}
\newcommand{\p}{p}%{{\bf p}}
\newcommand{\bu}{{u}}
\newcommand{\bbm}{m}%{{\bf m}} 
\newcommand{\C}{\mathcal C}
\newcommand{\gs}{\mathcal G_{\C}} %gain space
\newcommand{\pogp}{\langle G', \bbm' \rangle}
\newcommand{\Tor}{\mathcal T_0} %Fixed Torus
\newcommand{\pofw}{(\langle G, \bbm \rangle,\p)} %Periodic Orbit FrameWork
\newcommand{\pog}{\langle G, \bbm \rangle} %Periodic Orbit Graph
\newcommand{\dpfw}{( G^{\bbm}, \p^{\bbm})} %Derived Periodic FrameWork
\newcommand{\bbofw}{(\langle H, \bbm \rangle,q)} %Bar Body Orbit FrameWork
\newcommand{\bbog}{\langle H, \bbm \rangle} %Bar Body Orbit Graph
\newcommand{\pogind}{\langle G_H, \bbm_H \rangle} %Induced Periodic Orbit Graph
\newcommand{\pofwind}{(\langle G_H, \bbm_H \rangle, p_H)} %Induced Periodic Orbit Framework
\newcommand{\bbogp}{\langle H', \bbm' \rangle} %Bar-Body Orbit Graph Prime
\newcommand{\R}{\mathbf R} %Rigidity Matrix
\newcommand{\bbgood}{$[6,3]$-$\Tor^3$-graph}
\definecolor{desk}{rgb}{.345, .306, .216}
\definecolor{vancouver}{rgb}{.412, .412,.412}
\definecolor{beetle}{rgb}{.180, .161, .102}
\definecolor{bluey}{rgb}{.235, .380, .415}
\definecolor{melon}{rgb}{1, .259, .259}
\definecolor{vneck}{rgb}{.596, .282, .376}
\definecolor{pink}{rgb}{.918, .122, .545}
\definecolor{mango}{rgb}{1, .8, .267}
\definecolor{lips}{rgb}{.541, .074, .239}
\definecolor{sage}{rgb}{.522, .604, .247}
\definecolor{moss}{rgb}{.184, .224, .129}
\definecolor{cumin}{rgb}{.6, .580, 0}
\definecolor{lichen}{rgb}{.745, .998, .729}
\definecolor{rain}{rgb}{.780, .812, .706 }
\definecolor{cloud}{rgb}{.961, .976, .870}
\definecolor{couch}{rgb}{.8, 1, .2}
\definecolor{cement}{rgb}{.678, .682, .549}
\definecolor{sky}{rgb}{.278, .514, 1}
\newtheorem{thm}{Theorem}[section]
\newtheorem{lem}[thm]{Lemma}
\newtheorem{prop}[thm]{Proposition}
\newtheorem{conj}[thm]{Conjecture}
\newtheorem*{thma}{Theorem}
\theoremstyle{remark}
\newtheorem{rem}[thm]{Remark}
\theoremstyle{remark}
\newtheorem{ex}[thm]{Example}
\theoremstyle{remark}
\begin{document}

\title{The rigidity of periodic body-bar frameworks on the three-dimensional fixed torus}
\author{
{Elissa  Ross
\thanks{Department of Mathematics and Statistics, York University, Toronto, Canada.  elissa@mathstat.yorku.ca}}
}
\date{\today}
\maketitle

\begin{abstract} 
We present necessary and sufficient conditions for the generic rigidity of body-bar frameworks on the three-dimensional fixed torus. These frameworks correspond to infinite periodic body-bar frameworks in $\mathbb R^3$ with a fixed periodic lattice. \\	

\noindent
{\bf MSC:} 
52C25 \\%rigidity and flexibility of structures

\noindent 
{\bf Key words:} body-bar frameworks, infinitesimal rigidity, generic rigidity, periodic frameworks, gain graphs, inductive constructions
\end{abstract}

%\tableofcontents

%INTRODUCTION
\section{Introduction}

The study of periodic structures from the perspective of rigidity theory has received considerable attention in recent years, due in part to questions raised about the material properties of zeolites. Zeolites are a type of micro-porous mineral whose molecular structure is periodic in nature. Since the properties of zeolites are related to their structural properties, it has been a topic of interest to determine theoretically when such a material is rigid or flexible \cite{flexibilityWindow}. 

Toward this end, there has been a surge of interest in the study of {\it periodic frameworks}. A periodic framework is composed of rigid bars linked together periodically to form an infinite repetitive bar-joint framework, that is, rigid bars connected by flexible joints. This can be described by an infinite graph $\tilde G$, together with a group, say $\Gamma$, which describes the periodicity of the graph $\tilde G$. Together with a periodic placement $\tilde p$ of the vertices of $\tilde G$ in $\mathbb R^d$, we have a periodic framework $(\tilde G, \tilde p)$ \cite{BS1}. The periodicity of the framework determines the {\it periodic lattice}, which we may consider as either fixed or variable. The periodic lattice is generated by the $d$ translations under which $(\tilde G, \tilde p)$ is invariant.

In the thesis of the author, a combinatorial characterization for the {\it generic rigidity} of two-dimensional periodic frameworks with a fixed periodic lattice was presented. That is, we found combinatorial conditions on a {\it periodic orbit graph} which are sufficient to guarantee the rigidity of {\it almost all} realizations of the graph as periodic frameworks. 
Subsequent work by Malestein and Theran \cite{theran} characterized two dimensional periodic frameworks with a variable periodic lattice. Unfortunately, like the study of the rigidity of finite graphs, we lack combinatorial tools to predict the generic rigidity of periodic frameworks for $d >2$. 

It is natural, then, that we turn our attention to {\it periodic body-bar frameworks}, which are formed of rigid bodies linked periodically together by bars (two bodies can be linked by multiple bars, hence body-bar frameworks are captured by multigraphs, in contrast to bar-joint frameworks which are always described by simple graphs). In their finite incarnations, body-bar frameworks admit neat combinatorial characterizations for all $d$ \cite{Tay2}. The question then becomes, do periodic body-bar frameworks also have a nice combinatorial characterization? 

In \cite{myThesis}, a complete characterization of periodic body-bar frameworks with a fixed lattice was conjectured by the author. In this paper, we prove the result for $d=3$ (with $d = 1,2$ following from the previous bar-joint characterizations \cite{myThesis}). In particular, we show:
\begin{thma}
$\bbog$ is a periodic orbit graph corresponding to a generically minimally rigid body-bar periodic framework in $\mathbb R^3$ if and only if 
\begin{enumerate}
	\item $|E(H)| = 6|V(H)| - 3$
	\item for all non-empty subsets $Y \subset E(H)$ of edges
		\begin{equation*}
			|Y| \leq 6|V(Y)| - 6 + \sum_{i=1}^{|\gs(Y)|}(3-i).
		\end{equation*}	
\end{enumerate}
\end{thma}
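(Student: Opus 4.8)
The plan is to treat the two directions separately: necessity of the two conditions by a rank argument on the periodic body-bar rigidity matrix, and sufficiency by an inductive construction on gain graphs.

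For necessity, recall that $\bbog$ is generically (minimally) rigid on the fixed $3$-torus exactly when its rigidity matrix $R=R(\bbog,\p)$ --- with $|E(H)|$ rows and $6|V(H)|$ columns (six degrees of freedom per body, and no columns for the lattice since it is fixed) --- has rank $6|V(H)|-3$ at a generic $\p$, together with linearly independent rows in the minimal case. The kernel of $R$ always contains the $3$-dimensional space of infinitesimal translations, so $|E(H)|=\rank R=6|V(H)|-3$, which is condition (1). For condition (2), fix a nonempty $Y\subseteq E(H)$ and let $R_Y$ be the submatrix on the rows of $Y$ and the columns of the bodies in $V(Y)$; this is the rigidity matrix of the sub-orbit-framework carried by $Y$. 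I would show that, at \emph{every} placement, $\ker R_Y$ contains the space of infinitesimal rigid motions $x\mapsto Sx+t$ ($S$ skew) compatible with the gains appearing in $Y$, namely those with $SL\gamma=0$ for every gain $\gamma$ of a closed walk in $Y$ ($L$ the fixed lattice matrix); since $S$ is skew, writing $k(Y)=|\gs(Y)|$ for the rank of the group these gains generate, this space has dimension $3+\binom{3-k(Y)}{2}=6-\sum_{i=1}^{k(Y)}(3-i)$. Hence $\rank R_Y\le 6|V(Y)|-6+\sum_{i=1}^{k(Y)}(3-i)$ generically, and linear independence of the rows of $R$ forces $|Y|\le\rank R_Y$, which is exactly condition (2).

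For sufficiency I would show that every gain graph $\bbog$ satisfying (1) and (2) is generically minimally rigid, via an inductive construction. First the base case: the single-vertex gain graph with three loops whose gains form a basis of $\Z^3$ satisfies (1)--(2) and is generically minimally rigid (its $3\times 6$ rigidity matrix has rank $3$ at generic body position and generic bar attachment points). Next, identify a finite list of reduction moves --- a $0$-reduction deleting a body of degree $6$ with its six bar-ends, and $\br$-reductions ($\br\ge 1$) deleting a body of degree $6+\br$ and reinserting $\br$ suitably routed bars, with suitably chosen gains, among its former neighbours, together with whatever additional gain-altering moves prove necessary --- chosen so that (a) the inverse of each move is a rigidity-preserving extension, and (b) each produces a graph of the class on one more vertex. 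For (a) one verifies that the corresponding extension (adding a body joined by $6+\br$ bars, $\br$ of which replace or subdivide existing bars) carries a generically rigid gain framework to a generically rigid one; this is a rigidity-matrix computation carried out at a carefully degenerated configuration (placing the new body ``at a point'' relative to its neighbours, or a limiting argument) so that the new rows are seen to be independent of the old. For the completeness of the list, condition (1) gives average degree below $12$, hence a low-degree body, and a deficiency/counting argument in the spirit of the Laman and Tay proofs --- now tracking the gain-rank term $\sum(3-i)$ --- shows that for some low-degree body one of the moves stays inside the class.

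The main obstacle, as usual for Laman-type theorems, is exactly this last combinatorial step: proving that a reduction within the class is \emph{always} available. The gain structure is what makes it delicate, since one must choose not only which bars to reinsert but also which elements of $\Z^3$ to assign as their gains so that the hereditary inequality (2) is preserved for \emph{every} subset, and there may be configurations (for instance when the slack in $\sum(3-i)$ is concentrated in a single subgraph) for which the naive $0$- and $1$-reductions all fail and a less obvious move is required; pinning down the exact list of moves is part of the difficulty. An alternative, likely cleaner, route for both halves of sufficiency is to recast conditions (1)--(2) as characterizing the decompositions of $\bbog$ into three edge-disjoint spanning trees and three edge-disjoint spanning near-trees whose extra cycles carry independent gains --- a periodic analogue of Tay's tree-decomposition --- so that the rigidity equivalence follows from a Tay-type stacking argument and the combinatorial equivalence from a matroid-union / Nash--Williams--Tutte count; I expect the write-up to follow whichever of these routes keeps the bookkeeping with gains most transparent.
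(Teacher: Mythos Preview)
Your strategy is essentially the one the paper carries out: necessity via a kernel/rank bound on the periodic rigidity matrix (the paper quotes this from the bar-joint setting, Theorem~\ref{thm:dNecessary}, after passing to the induced bar-joint framework $\pogind$), and sufficiency via an inductive construction starting from the single-body graph with three loops.

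Two points of comparison are worth recording. First, where you leave open ``the exact list of moves,'' the paper commits to the edge-pinch operations $K(6,n,j)$ of Fekete and Szeg\H{o}, gain-modified so that a pinched edge $\{B_\alpha,B_\beta;m_e\}$ becomes $\{B_\alpha,B_0;m_{e_1}\}$, $\{B_0,B_\beta;m_{e_2}\}$ with $m_{e_1}+m_{e_2}=m_e$; rigidity-preservation of the inverse move is proved not by a degeneration of the body-bar matrix but by converting to the induced bar-joint framework and using periodic vertex additions and edge splits (Section~\ref{sec:barJointInductions}). Second, the combinatorial step you flag as the main obstacle is handled by a direct adaptation of the Fekete--Szeg\H{o} splitting-off argument, whose engine is the (case-by-case) lemma that if $b_H(X)=b_H(Y)=0$ for edge sets $X,Y$ then $b_H(X\cup Y)=0$ as well (Proposition~\ref{prop:union}); the gain-space term forces one to track $|\gs(X)|,|\gs(Y)|,|\gs(X\cap Y)|,|\gs(X\cup Y)|$ simultaneously, and the proof works through the six possible rank combinations. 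Your alternative Tay-style tree-decomposition route is not pursued here. One small correction: the base graph is $\langle P_3,m^*\rangle$ with $|\gs(P_3)|\ge 2$, not necessarily a basis of $\Z^3$; the reduction may terminate at a rank-$2$ gain space, so you should allow that.
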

The sparsity condition described by condition {\it 2} depends on the dimension of the {\it gain space}  $\gs(Y)$ of a set of edges $Y$, which, roughly speaking, can be thought of as part of the connectivity information about the periodic framework. Alternatively, $\gs(Y)$ describes the way in which the set of edges $Y$ ``wrap" around a torus (as a model of periodic space), and can thus be seen as information about the homotopy type of the edge set $Y$. 

We mention some very recent related work due to Borcea, Streinu and Tanigawa \cite{PeriodicBarBody}.  In that paper, the authors find necessary and sufficient combinatorial conditions for rigidity on {\it unlabelled} quotient graphs of infinite periodic body-bar frameworks. The work in the present paper is concerned with {\it labelled} quotient graphs of infinite periodic frameworks. Given a particular infinite periodic body-bar framework, this defines a unique (up to size of unit cell and translation) labeled quotient graph, which we will call a {\it periodic orbit framework}. The analysis of labelled quotient graphs is a harder problem than the analysis of unlabelled ones.  The approach of \cite{PeriodicBarBody} will only tell us that a good labelling (``lifting") of the quotient graph exists, but cannot predict whether a particular labelling will be good. That said, the paper \cite{PeriodicBarBody} considers $d$-dimensional frameworks, and does so with a variable periodic lattice. That is, strictly speaking, a harder problem to characterize than the fixed lattice considered here.  To further differentiate the present work from the recent paper \cite{PeriodicBarBody}, the methods used here are inductive methods, while the approach of Borcea, Streinu and Tanigawa is non-inductive. 

We conclude this brief introduction by mentioning some reasons why it may be valuable to consider periodic frameworks with a fixed periodic lattice. It may, at first glance, seem like an unnatural restriction or simplification of the problem of periodic frameworks (with a variable lattice). However, for frameworks with a variable periodic lattice which are flexible, the vertices which are ``far away" from the ``centre" (arbitrarily chosen) are moving arbitrarily quickly. Some materials scientists have suggested that the time scales of lattice movement are several orders of magnitude slower than molecular deformation within the lattice \cite{ThorpePrivate}. Finally, we may also view the fixed lattice as a stepping stone or preview of a full characterization of frameworks with a variable lattice. In particular, it is possible to view characterizations of frameworks with a variable lattice as consisting of two parts: a component which ``rigidifies" the variability of the lattice, and a second component which is rigid within the resulting fixed lattice. 

\subsection{Outline of paper}
We provide an extensive background section (Section \ref{sec:background}) which outlines several types of frameworks, and their rigidity. In particular, we discuss finite bar-joint frameworks, bar-joint periodic orbit frameworks and their corresponding derived periodic (bar-joint) frameworks, finite body-bar frameworks, body-bar periodic orbit frameworks and their corresponding derived periodic (body-bar) frameworks. We also describe induced bar-joint frameworks from body-bar frameworks. Section \ref{sec:barJointInductions} provides information about rigidity-preserving inductive constructions for three-dimensional bar-joint periodic orbit frameworks. In Section \ref{sec:mainResult} we state and motivate our main result, and establish the necessity of the characterization. Section \ref{sec:sufficient} is the main work of the paper, and details the proof of the sufficiency of the characterization. We conclude with some conjectures and ideas for further work. 

%BACKGROUND
\section{Background}
\label{sec:background}
In this section we outline the basic notions of finite and periodic bar-joint frameworks, and finite and periodic body-bar frameworks. 

%FINITE BAR-JOINT
\subsection{Finite bar-joint frameworks}
\label{sec:finiteBarJoint}
A {\it bar-joint framework} $(G, p)$ is a finite simple graph $G$ together with a position of the vertices of $G$ in Euclidean space, $p:V(G) \rightarrow \mathbb R^d$, with $p(v_i) \neq p(v_j)$ for all $\{v_i, v_j\} \in E(G)$. We write $p(v_i) = p_i$, $p_i \in \mathbb R^d$. An {\it infinitesimal motion} of $(G, p)$ is a function $u: V(G) \rightarrow \mathbb R^d$ (we write $u(v_i) = u_i$) such that 
\[(u_i - u_j) \cdot (p_i - p_j) = 0 \ \textrm{for all edges} \ \ e = \{v_i, v_j\} \in E(G).\]
In other words, an infinitesimal motion instantaneously preserves the lengths of the bars of the framework. 
An infinitesimal motion of $(G, p)$ is {\it trivial} if it corresponds to an infinitesimal rigid motion (e.g. a rotation or translation). If the only infinitesimal motions of $(G, p)$ are trivial, we say that $(G,p)$ is {\it infinitesimally rigid}, otherwise $(G,p)$ is {\it infinitesimally flexible}. 

It is clear that if the graph $G$ is disconnected, then $(G, p)$ is always infinitesimally flexible, hence we assume throughout that $G$ is connected. 

In this paper we focus our attention on infinitesimal rigidity, which is sufficient to guarantee the continuous rigidity of a framework in $\mathbb R^d$, although the converse is false. The focus on infinitesimal rigidity is thus a typical way to attack the problem of continuous rigidity, which is computationally challenging. See any basic reference on rigidity theory for details \cite{CombRigidity,CountingFrameworks,SomeMatroids,SceneAnalysis}.

The vector space of infinitesimal motions of a framework $(G, p)$ is given by the kernel of the {\it rigidity matrix}, $\R(G,p)$. This is the $|E| \times d|V|$ matrix with $d$ columns for each vertex, and one row corresponding to each edge $e = \{v_i, v_j\} \in E(G)$, as follows: 
\[\left(\begin{array}{ccccc}0 \dots 0 & \p_i - \p_j & 0 \dots 0 & \p_j-\p_i & 0\dots 0\end{array}\right),\]
where the non-zero entries occur in the columns corresponding to vertices $v_i$ and $v_j$ respectively. 
Note that there will always be a ${d+1 \choose 2}$-dimensional space of trivial solutions of $\R(G, p)$, corresponding to infinitesimal rigid motions of $\mathbb R^d$ (translations, rotations). The following basic theorem follows: 
\begin{thm}[see for example \cite{SceneAnalysis}]
A framework $(G, p)$ with $|V(G)|>d$ is infinitesimally rigid in $\mathbb R^d$ if and only if 
\[\rank \R(G, p) = d|V(G)| - {d+1 \choose 2}.\]
\label{thm:finiteMatrix}
\end{thm}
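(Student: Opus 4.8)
The engine of the proof is the rank-nullity theorem applied to $\R(G,p)$, together with the observation that, by construction, $\ker \R(G,p)$ is exactly the space of infinitesimal motions of $(G,p)$. I would proceed in three steps. First, record the dimension count: since $\R(G,p)$ has $d|V(G)|$ columns, rank-nullity gives
\[
\dim \ker \R(G,p) = d|V(G)| - \rank \R(G,p),
\]
and a function $u\colon V(G)\to\mathbb{R}^d$ lies in $\ker \R(G,p)$ precisely when $(u_i-u_j)\cdot(p_i-p_j)=0$ for every edge $\{v_i,v_j\}$, i.e.\ precisely when $u$ is an infinitesimal motion.

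Second, locate the trivial motions inside this kernel. Any infinitesimal rigid motion of $\mathbb{R}^d$ has the form $u(x)=Sx+t$ with $t\in\mathbb{R}^d$ and $S$ a $d\times d$ skew-symmetric matrix; restricting to the vertices, $u_i = Sp_i + t$. Then
\[
(u_i-u_j)\cdot(p_i-p_j) = \big(S(p_i-p_j)\big)\cdot(p_i-p_j) = 0
\]
because $S$ is skew-symmetric. Hence the space $\T$ of trivial motions is a subspace of $\ker\R(G,p)$, and, as noted just before the statement (this is where $|V(G)|>d$ is invoked), $\dim\T = {d+1\choose 2}$.

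Third, conclude. By definition $(G,p)$ is infinitesimally rigid exactly when $\ker\R(G,p)$ contains no motions beyond the trivial ones, i.e.\ when $\ker\R(G,p)=\T$; since $\T\subseteq\ker\R(G,p)$ always holds, this is equivalent to $\dim\ker\R(G,p) = {d+1\choose 2}$. Substituting the rank-nullity identity from the first step and rearranging gives $\rank\R(G,p) = d|V(G)| - {d+1\choose 2}$, and every step in this chain is an equivalence, so the converse direction is immediate.

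The one genuinely delicate point is the assertion that $\dim\T$ equals ${d+1\choose 2}$ rather than something larger: the $d$ translations and ${d\choose 2}$ independent infinitesimal rotations restrict to linearly independent vector fields on the configuration $p$ only when $p$ is not confined to a proper affine subspace of $\mathbb{R}^d$, which is the reason for the hypothesis $|V(G)|>d$ (and the standing assumption that $G$ is connected with distinct endpoints on edges). Granting that fact, everything else is a formal consequence of rank-nullity and a one-line skew-symmetry computation.
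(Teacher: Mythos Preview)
The paper does not actually prove this theorem; it is stated with a citation to \cite{SceneAnalysis} as a standard result from the rigidity literature and no argument is given. Your proof is correct and is the standard one: rank--nullity identifies $\ker\R(G,p)$ with the space of infinitesimal motions, the trivial motions form a $\binom{d+1}{2}$-dimensional subspace, and infinitesimal rigidity is precisely the equality of these two spaces.

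One small sharpening of your closing caveat: the hypothesis $|V(G)|>d$ by itself does not guarantee $\dim\T=\binom{d+1}{2}$; what is actually needed is that the points $p_i$ affinely span a subspace of dimension at least $d-1$ (so that the restriction map from infinitesimal isometries of $\mathbb{R}^d$ to vector fields on the configuration is injective). The condition $|V(G)|>d$ makes this possible and is necessary for the rank formula to be meaningful, but strictly speaking the theorem as usually stated either assumes the affine span condition or is understood generically. You identify the issue correctly; just be aware that connectedness of $G$ plays no role in this particular point.
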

If $(G,p)$ is infinitesimally rigid and satisfies $|E(G)| = d|V(G)| - {d+1 \choose 2}$, we say that $(G,p)$ is {\it isostatic}, meaning that it is minimally rigid. The notion of minimal rigidity also corresponds to the idea of a framework which is {\it independent}, meaning that the rows of the rigidity matrix are linearly independent. We elaborate on this idea in the subsequent section. 

We remark finally that the rigidity of frameworks is a generic property: for almost all configurations $p$ of a graph $G$, the framework $(G,p)$ will be either infinitesimally rigid or infinitesimally flexible. In particular, we say that a {\it generic} position $p$ of the graph $G$ is one such that all minors of $\R(G, p)$ that have determinants which are not identically zero have non-zero determinant. This yields an open dense set of generic positions for a given graph $G$.  We say that a graph $G$ is {\it generically rigid} if it is infinitesimally rigid at all generic positions $p$. There are other ways to define generic, for example we may demand that the coordinates of the vertices of $G$ be algebraically independent over the rationals. In this case, the set of generic configurations is dense but not open.

The preceding definitions and notions are entirely standard. For further details see for example \cite{CombRigidity,CountingFrameworks,SomeMatroids} or \cite{SceneAnalysis}. 

%BAR JOINT ORBIT FRAMEWORKS
\subsection{Bar-joint periodic orbit frameworks on the fixed torus $\Tor^d$}
\label{sec:barJointOrbit}
To describe periodic frameworks, we use the language of gain graphs (also known as {\it voltage graphs} \cite{TopologicalGraphTheory}). A {\it gain graph} is a finite multigraph $G$ whose edges are labeled invertibly by the elements of a group. Suppose the edges of $G$ are labeled by elements of $\mathbb Z^d$, with the function $m:E(G)^+ \rightarrow \mathbb Z^d$. We say that the pair $\pog$ is a (bar-joint) {\it periodic orbit graph}, for reasons that will soon become clear (Figure \ref{fig:gainGraph} (a)). The vertices of $\pog$ are simply the vertices of the graph $G$. The edges of $\pog$ are recorded $e= \{v_i, v_j; m_e\}$, where $v_i, v_j \in V(G),  m_e \in \mathbb Z^d$. Since the edges are labeled invertibly by elements of $\mathbb Z^d$, it follows that the edge $e$ can equivalently be written: 
\[e= \{v_i, v_j; m_e\} = \{v_j, v_i; -m_e\}.\]

From the periodic orbit graph $\pog$, we may define the {\it derived periodic graph} $G^m$ which is the (infinite) graph whose vertex and edge sets are given by $V(G) \times \mathbb Z^d$ and $E(G) \times \mathbb Z^d$ respectively ( Figure \ref{fig:gainGraph}(b)) . If $v_i$ is a vertex of $\pog$, we say that $(v_i, z), z \in \mathbb Z^d$ is the {\it orbit} of $v_i$ in $G^m$.  Similarly, if $e = \{v_i, v_j; m_e\}$ is an edge of $\pog$, then the {\it orbit} of edges in $G^m$ corresponding to $e$ is given by:
\[\{(v_i, z_1), (v_j, z_2+m_e)\}, \ \textrm{where} \ v_i, v_j \in V(G), z_1, z_2, m_e \in \mathbb Z^d.\]

\begin{figure}[h!]
\begin{center}
\subfigure[$\pog$]{\label{fig:periodicFramework}\includegraphics[width=1.5in]{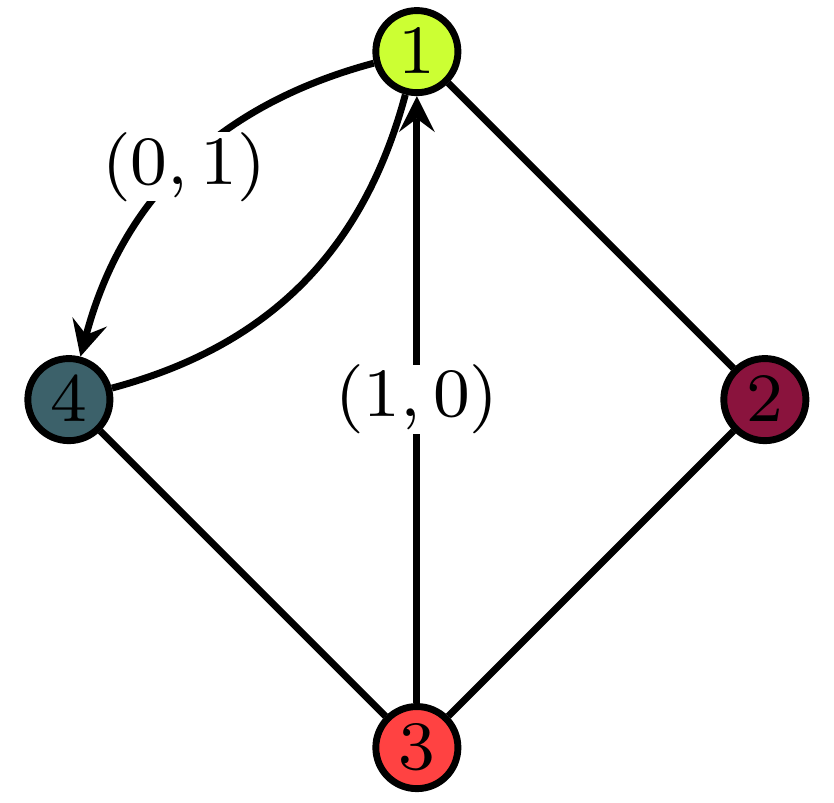}}
\hspace{.5in}%
\subfigure[$G^m$]{\label{fig:periodicFramework}\includegraphics[width=1.5in]{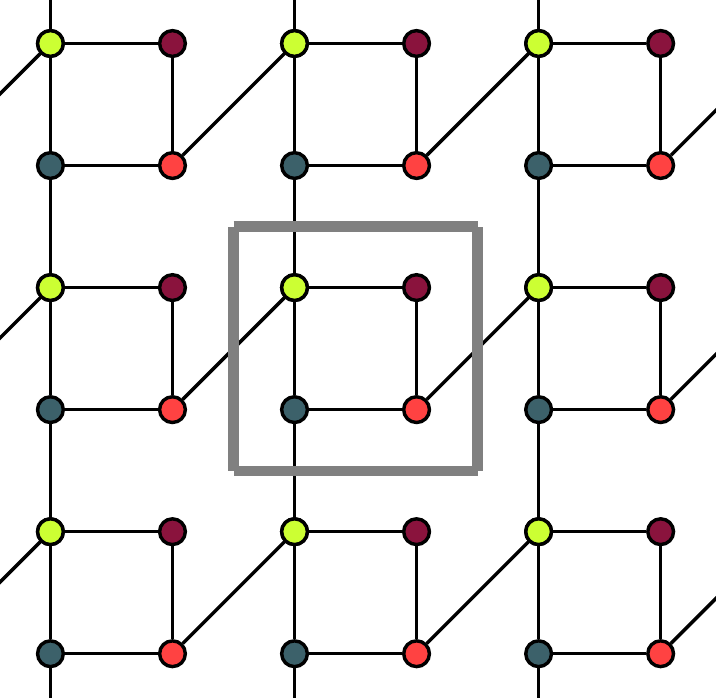}}
\caption{A gain graph $\pog$, where $m:E \rightarrow \mathbb Z^2$, and its derived graph $G^m$.   \label{fig:gainGraph}}
\end{center}
\end{figure}

Throughout this paper, we are concerned with periodic frameworks with a fixed periodic lattice, or equivalently, orbit frameworks on a {\it fixed torus}, which we denote $\Tor^d$. By this we mean the quotient space $\Tor^d = \mathbb R^d/ L \mathbb Z^d$, where $L$ is a $d \times d$ matrix we call the {\it lattice matrix}. The lattice matrix can be viewed as a set of translations under which a periodic framework is invariant. In fact, it has been demonstrated that infinitesimal rigidity is invariant under affine transformations \cite{BS1,myThesis}, and as a result it is sufficient to consider frameworks on the unit lattice ($L = I_{d \times d}$) as representatives of all periodic orbit frameworks. We henceforth use $\Tor^d = \mathbb R^d/\mathbb Z^d = [0,1)^d$ as the fixed torus. 

When we assign specific geometric positions on the fixed torus $\Tor^d$ to the vertices of $\pog$, we call the resulting object a {\it periodic orbit framework (on $\Tor^d$)}, which we denote $\pofw$, with $p: V(G) \rightarrow \Tor^d$, and $p(v_i) \neq p(v_j)$ if $e = \{v_i, v_j; m_e\} \in E\pog$. This in turn corresponds to the {\it derived periodic framework}, $\dpfw$, where the positions of the vertices are given by 
\[p^m(v_i, z) = p(v_i) + z, \ \textrm{where }\ v_i \in V(G), z \in \mathbb Z^d.\]

In \cite{ThesisPaper1} we showed that every infinite periodic framework in the sense of Borcea and Streinu \cite{BS1} can be represented as the derived periodic framework corresponding to a periodic orbit framework on the torus. The approach differs in terms of motions, which we shall now describe.

 An {\it infinitesimal motion} of a periodic orbit framework $\pofw$ on $\Tor^d$ is an assignment of velocities to each of the vertices, $\bu: V(G) \rightarrow \mathbb{R}^d$,  such that 
 \begin{equation*}
(\bu_i - \bu_j)\cdot(\p_i - \p_j-\bbm_e) = 0 
\label{eqn:fixTorMot}
\end{equation*}
for each edge $e = \{v_i, v_j;\bbm_e\} \in E\pog$.
 Such an infinitesimal motion instantaneously preserves the lengths of all of the bars of the framework. An infinitesimal motion is called  {\it trivial} if it in fact satisfies:
\begin{equation*}
(\bu_i - \bu_j)\cdot(\p_i - \p_j-\bbm_e) = 0
\label{eqn:fixTorTrivMot}
\end{equation*}
for {\it all} triples  $\{v_i, v_j;\bbm_e\}$, $m_e \in \mathbb Z^d$.  For any periodic orbit framework $\pofw$ on $\Tor^d$, there will always be a $d$-dimensional space of  trivial infinitesimal motions of the whole framework, namely the space of infinitesimal translations.  Rotations are no longer trivial motions, since we have fixed a representation of the lattice (and therefore of the torus $\Tor^d$).

Suppose $u$ is an infinitesimal motion of $\pofw$ on $\Tor^d$. We can translate this to an infinitesimal motion of $\dpfw$ in $\mathbb R^d$ in the following way. Let $u^m: V(G) \times \mathbb Z^d \rightarrow \mathbb R^d$ be given by
\[u^m(v_i, z) = u(v_i).\]
In other words, all vertices in the orbit corresponding to the vertex $v_i \in V(G)$ have the same infinitesimal motion. This is in contrast to motions of $\pofw$ on a variable torus. In that setting, the motion assignment on $(i, z)$ will depend on $z \in \mathbb Z^d$. In particular, the velocities of the vertices of $\dpfw$ are allowed to be infinitely large. 

The vector space of all infinitesimal motions of $\pofw$ on $\Tor^d$ is given by the kernel of the {\it periodic rigidity matrix}, $\R(\pofw)$. This is the $|E| \times d|V|$ matrix with one row for each edge $e = \{v_i, v_j; m_e\} \in E\pog$ as follows: 
\[\left(\begin{array}{ccccc}0 \dots 0 & \p_i - (\p_j+m_e) & 0 \dots 0 & (\p_j+m_e)-\p_i & 0\dots 0\end{array}\right),\]
where the non-zero vector entries occur in the columns corresponding to vertices $v_i$ and $v_j$ respectively. 

There is always a $d$-dimensional space of trivial solutions to this matrix, namely the infinitesimal translations.  We have the following version of Theorem \ref{thm:finiteMatrix} for periodic orbit frameworks: 
\begin{thm}[\cite{ThesisPaper1}]
A periodic orbit framework $\pofw$ is infinitesimally rigid on the fixed torus $\Tor^d$ if and only if the rigidity matrix $\R\pofw$ has rank $d|V(G)| - d$.
\label{thm:periodicMatrix}
\end{thm}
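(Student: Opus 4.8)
The plan is to follow the template of Theorem \ref{thm:finiteMatrix}: apply rank--nullity to $\R\pofw$ after pinning down exactly the space of trivial infinitesimal motions. First I would note that, directly from the construction of the periodic rigidity matrix, a map $u : V(G) \to \mathbb{R}^d$ lies in $\ker \R\pofw$ if and only if $(u_i - u_j)\cdot(\p_i - \p_j - m_e) = 0$ for every edge $e = \{v_i, v_j; m_e\}$ of $\pog$; in other words $\ker \R\pofw$ is precisely the vector space of infinitesimal motions of $\pofw$ on $\Tor^d$. Hence $\pofw$ is infinitesimally rigid exactly when this kernel coincides with the space of trivial infinitesimal motions, and the theorem reduces to the claim that the latter is always a $d$-dimensional subspace, independent of the configuration $\p$.

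For that, I would check both inclusions. Any infinitesimal translation $u \equiv c$ with $c \in \mathbb{R}^d$ satisfies $(u_i - u_j)\cdot(\p_i - \p_j - m_e) = 0$ for every triple $\{v_i, v_j; m_e\}$, $m_e \in \mathbb{Z}^d$, since $u_i - u_j = 0$; so the translations span a $d$-dimensional subspace of the trivial motions. Conversely, suppose $u$ is trivial. For any pair $v_i, v_j$, subtracting the defining equations for $\{v_i, v_j; m_e\}$ and $\{v_i, v_j; 0\}$ gives $(u_i - u_j)\cdot m_e = 0$ for every $m_e \in \mathbb{Z}^d$, and since $\mathbb{Z}^d$ spans $\mathbb{R}^d$ this forces $u_i = u_j$. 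Thus $u$ is constant, i.e. an infinitesimal translation, so the trivial motion space is exactly the $d$-dimensional translation space. (This holds for every configuration $\p$, which is why --- in contrast with Theorem \ref{thm:finiteMatrix} --- no lower bound on $|V(G)|$ is required, and why rotations, which are configuration-dependent, do not appear.)

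Finally I would assemble the pieces. The matrix $\R\pofw$ has $d|V(G)|$ columns, and its kernel always contains the $d$-dimensional translation space, so $\dim \ker \R\pofw \ge d$ and therefore $\rank \R\pofw \le d|V(G)| - d$ in every case. By the previous paragraph, $\pofw$ is infinitesimally rigid if and only if $\ker \R\pofw$ equals the translation space, if and only if $\dim \ker \R\pofw = d$, if and only if (by rank--nullity) $\rank \R\pofw = d|V(G)| - d$.

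I do not expect a genuine obstacle here: the only content is the elementary identification of the trivial motions with the translations, and the only point that takes a moment of care is the bookkeeping of self-loops $\{v, v; m_e\}$, whose rows in $\R\pofw$ vanish identically on the fixed torus and hence affect neither the kernel nor the rank. The result is in any case established in \cite{ThesisPaper1}; the above is simply how I would reconstruct the argument.
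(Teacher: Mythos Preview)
Your argument is correct and is the standard one: identify the kernel of $\R\pofw$ with the space of infinitesimal motions, show directly from the definition that the trivial motions are exactly the $d$-dimensional translation space, and finish with rank--nullity. The subtraction trick $(u_i-u_j)\cdot m_e = 0$ for all $m_e\in\mathbb Z^d$ is precisely the right way to extract $u_i=u_j$, and your aside on loops matches the paper's own remark that loop edges contribute identically zero rows.

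There is nothing to compare against, however: the paper does not supply a proof of this theorem. It is quoted from \cite{ThesisPaper1} as background (the analogue of Theorem~\ref{thm:finiteMatrix} for the fixed torus), and the paper simply states it and moves on. Your reconstruction is exactly the kind of argument one would expect to find in the cited source.
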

If $\pofw$ is infinitesimally rigid on $\Tor^d$, and satisfies $|E(G)| = d|V(G)| - d$, we say that $\pofw$ is {\it minimally rigid on $\Tor^d$}.

If a set of rows of the rigidity matrix are linearly independent, we say that the corresponding edges are {\it independent}. Note then that minimal rigidity corresponds to the situation where the framework is infinitesimally rigid, {\it and} all edges of the framework are independent. For finite frameworks this combination is known as isostatic, however, we avoid the term isostatic in the periodic context, for the reasons discussed in \cite{DeterminancyRepetitive}. 

If a framework $\pofw$ contains loop edges, then those edges are automatically dependent. For this reason we usually assume that periodic bar-joint orbit graphs $\pog$ do not possess loops, since they are always {\it redundant}, meaning dependent in $\R(\pofw)$. 

As for finite frameworks, if $G$ is disconnected then $\pog$ is automatically infinitesimally flexible on $\Tor^d$. Note, however, that $G$ may be connected, but the derived periodic framework $G^m$ may be disconnected, as in the case of the two-vertex, two-edge graph where the edges are labelled by $(0,0)$ and $(1,0)$ respectively. Despite the disconnection of $G^m$, the periodic orbit graph $\pog$ is nevertheless infinitesimally rigid on the two-dimensional fixed torus $\Tor^2$, which provides an indication of the structure forced by working on the fixed torus. 

Similar to the situation for finite bar-joint frameworks, we may define a notion of a {\it generic} position of the vertices of $\pofw$ on $\Tor^d$. That is, we say that $p$ is generic if all minors of $\R(\pofw)$ that have determinants which are not identically zero have non-zero determinant. We say that the periodic orbit graph $\pog$ is {\it generically rigid}  if it is infinitesimally rigid at all generic positions $p$ on $\Tor^d$. 

We remark that to record a rigidity matrix for frameworks with a variable lattice, one must add extra columns. See \cite{BS1}, \cite{theran} or \cite{myThesis} for details. 

% FINITE BODY BAR FRAMEWORKS
\subsection{Finite body-bar frameworks}

Roughly speaking, a (finite) body-bar framework is a collection of bodies linked together with rigid bars. In defining body-bar frameworks more precisely, we follow the approach of Connelly, Jord\'{a}n and Whiteley \cite{genericGlobal}. The bodies of a body-bar framework consist of collections of vertices, with each body joined to some of the other bodies by disjoint bars. Each body is assumed to be an isostatic finite bar-joint framework, although the details of the specific isostatic frameworks are unimportant, provided that the framework spans an affine space of dimension $d-1$ or greater. A {\it generic} body-bar framework is one where all of the vertices of all of the bodies are generic, in the sense of Section \ref{sec:finiteBarJoint}. A multigraph $H$ records the connections between the bodies ($H$ is permitted multiple edges, but no loops), and each vertex of $H$ represents a body. 

When we wish to explicitly refer to the vertices of a particular body, we use the graph $G_H$, which is the multigraph $H$ in which each vertex has been replaced with an isostatic graph, creating an isostatic bar-joint framework. $G_H$ is indeed a simple graph with no multiple edges, because any two edges connecting a pair of bodies must have distinct vertices. Since every finite body in $\mathbb R^d$ has ${d+1 \choose 2}$ degrees of freedom, and there is a ${d+1 \choose 2}$-dimensional space of trivial motions of any body-bar framework, a body-bar framework is called {\it minimally rigid} if it satisfies $|E(H)| = {d+1 \choose 2}|V(H)| - {d+1 \choose 2}$. The following result of Tay characterizes the generic rigidity of body-bar orbit frameworks in $d$-dimensions:
\begin{thm}[\cite{Tay2}]
A multigraph $H$ is generically rigid as a body-bar framework if and only if the edges of $H$ admit a decomposition into ${d+1 \choose 2}$ edge-disjoint spanning trees. 
\end{thm}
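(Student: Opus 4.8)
The plan is to reduce the geometric statement to a purely matroidal one. First I would write down the body-bar rigidity matrix. Each body in $\mathbb R^d$ carries a $\binom{d+1}{2}$-dimensional space of infinitesimal rigid motions (the Lie algebra of rigid motions of $\mathbb R^d$), and a bar between bodies $i$ and $j$ imposes a single linear equation on the pair of body-motions: the two bodies must move compatibly along the line $\ell_e$ carrying the bar, an equation encoded by the Pl\"ucker coordinate vector $S_e \in \mathbb R^{\binom{d+1}{2}}$ of $\ell_e$. Hence the rigidity matrix $\mathbf R(H,p)$ is the $|E(H)| \times \binom{d+1}{2}|V(H)|$ matrix whose row for $e=\{i,j\}$ carries $S_e$ in the column-block of body $i$, $-S_e$ in the column-block of body $j$, and zeros elsewhere; structurally it is an oriented incidence matrix of $H$ with each nonzero entry $\pm 1$ replaced by the vector $\pm S_e$. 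There is always a $\binom{d+1}{2}$-dimensional kernel (the global rigid motions of $\mathbb R^d$), so the framework is infinitesimally rigid exactly when $\rank \mathbf R(H,p) = \binom{d+1}{2}(|V(H)|-1)$; and, as for the rigidity matrices appearing earlier in the paper, generic independence of a set of rows is a configuration-independent property, so it defines a matroid $\mathcal R_d(H)$ on $E(H)$, the body-bar rigidity matroid.

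The heart of the argument is the identity $\mathcal R_d(H) = M(H)^{\vee \binom{d+1}{2}}$, where $M(H)$ is the cycle matroid of $H$ and the right-hand side is the union of $\binom{d+1}{2}$ copies of it. The inclusion $\mathcal R_d(H) \subseteq M(H)^{\vee \binom{d+1}{2}}$ holds at every configuration by a dimension count: for any $Z \subseteq E(H)$ the rows indexed by $Z$ are supported on the column-blocks of the vertices met by $Z$, and within each connected component of the subgraph $(V(Z),Z)$ the rigid motions of that sub-configuration span a $\binom{d+1}{2}$-dimensional space orthogonal to all those rows; summing over components gives $\rank(\textrm{rows of } Z) \le \binom{d+1}{2}\, r_{M(H)}(Z)$ for every $Z$, and since independence in $M(H)^{\vee \binom{d+1}{2}}$ means exactly that no subset $Z$ violates $|Z| \le \binom{d+1}{2}\, r_{M(H)}(Z)$, any independent set of $\mathcal R_d(H)$ is independent in the union. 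For the reverse inclusion I would take $Y \subseteq E(H)$ independent in $M(H)^{\vee \binom{d+1}{2}}$, so that $Y$ decomposes into forests $F_1,\dots,F_{\binom{d+1}{2}}$, and exhibit a single configuration at which the rows of $Y$ are independent: arrange the bodies and bars so that every bar of $F_t$ has Pl\"ucker vector equal to the $t$-th standard basis vector $e_t$ of $\mathbb R^{\binom{d+1}{2}}$. Then the rows coming from $F_t$ are supported on the single $t$-th coordinate of each body-block and form an oriented incidence matrix of the forest $F_t$, whose rows are linearly independent; distinct values of $t$ use disjoint coordinates, so all rows of $Y$ are independent at this configuration, hence at a generic one.

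I expect this last step to be the only real obstacle, for the geometric reason that the Pl\"ucker coordinates of genuine lines in $\mathbb R^d$ fill out the Grassmannian (the Klein quadric when $d=3$), a proper subvariety of $\mathbb R^{\binom{d+1}{2}}$, so in general no single bar has Pl\"ucker vector exactly $e_t$. This is handled by a limiting argument: each $e_t$ is a limit of Pl\"ucker vectors of honest bars (for instance bars becoming long and nearly parallel to a fixed coordinate direction, or receding to infinity), and linear independence of a set of rows is an open condition, so it persists for a nearby, genuinely realizable configuration; equivalently, the relevant maximal minor of $\mathbf R(H,p)$, read as a polynomial in the entries of the $S_e$, is not identically zero on the product of Grassmannians, which is precisely what genericity requires. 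Granting the matroid identity, the theorem follows immediately: $H$ is generically rigid iff $\rank \mathcal R_d(H) = \binom{d+1}{2}(|V(H)|-1)$, iff $M(H)^{\vee \binom{d+1}{2}}$ has that rank, iff --- by the matroid union theorem of Nash-Williams and Edmonds specialised to the graphic matroid (equivalently, the Nash-Williams--Tutte tree-packing theorem) --- $E(H)$ contains $\binom{d+1}{2}$ edge-disjoint spanning trees; when $|E(H)| = \binom{d+1}{2}(|V(H)|-1)$ these spanning trees necessarily partition $E(H)$, yielding the decomposition in the statement.
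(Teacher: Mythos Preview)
The paper does not supply a proof of this statement; it is quoted from Tay \cite{Tay2} purely as background for the body-bar setting, so there is no ``paper's own proof'' to compare against. Your outline is essentially Tay's original argument, in the extensor-algebra formulation later given by White and Whiteley \cite{PureCondition}: identify the generic body-bar rigidity matroid on $E(H)$ with the $\binom{d+1}{2}$-fold union of the cycle matroid of $H$, then invoke Nash-Williams--Tutte tree-packing. Both inclusions are handled correctly. One small sharpening of the step you flag as the obstacle: every standard basis $2$-extensor $e_i \wedge e_j$ of $\bigwedge^2 \mathbb R^{d+1}$ is already decomposable and hence lies on the Grassmannian, so the specialization you write down is a genuine point of the product of Grassmannians and the relevant minor is nonzero there; the limiting or Zariski-density argument is needed only to pass from projective lines (some of which are at infinity when $i,j \geq 1$) to honest affine bars, and your openness-of-independence remark covers that. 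The proof is correct.
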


%BAR BODY ORBIT FRAMEWORKS
\subsection{Body-bar periodic orbit frameworks}

We now define our main object of interest, body-bar periodic orbit graphs, which provide a recipe for periodic body-bar frameworks. Let $H$ be a multigraph, which is now permitted to have both multiple edges and loops. Let $m: E(H)^+ \rightarrow \mathbb Z^d$ be a map on the directed edges of $H$. The pair $\bbog$ is called a {\it body-bar periodic orbit graph}, where the labeled edges provide a description of how the edges of $H$ ``wrap" around the $d$-dimensional fixed torus $\Tor^d$ (see Figure \ref{fig:bodyBarEx}(a)). The vertices of $\bbog$ are denoted $B_1, B_2, \dots$ and the edges are given by $e=\{B_{\alpha}, B_{\beta}; m_e\}$.

\begin{figure}[htbp]
\begin{center}
\subfigure[$\bbog$]{\includegraphics[width=1.5in]{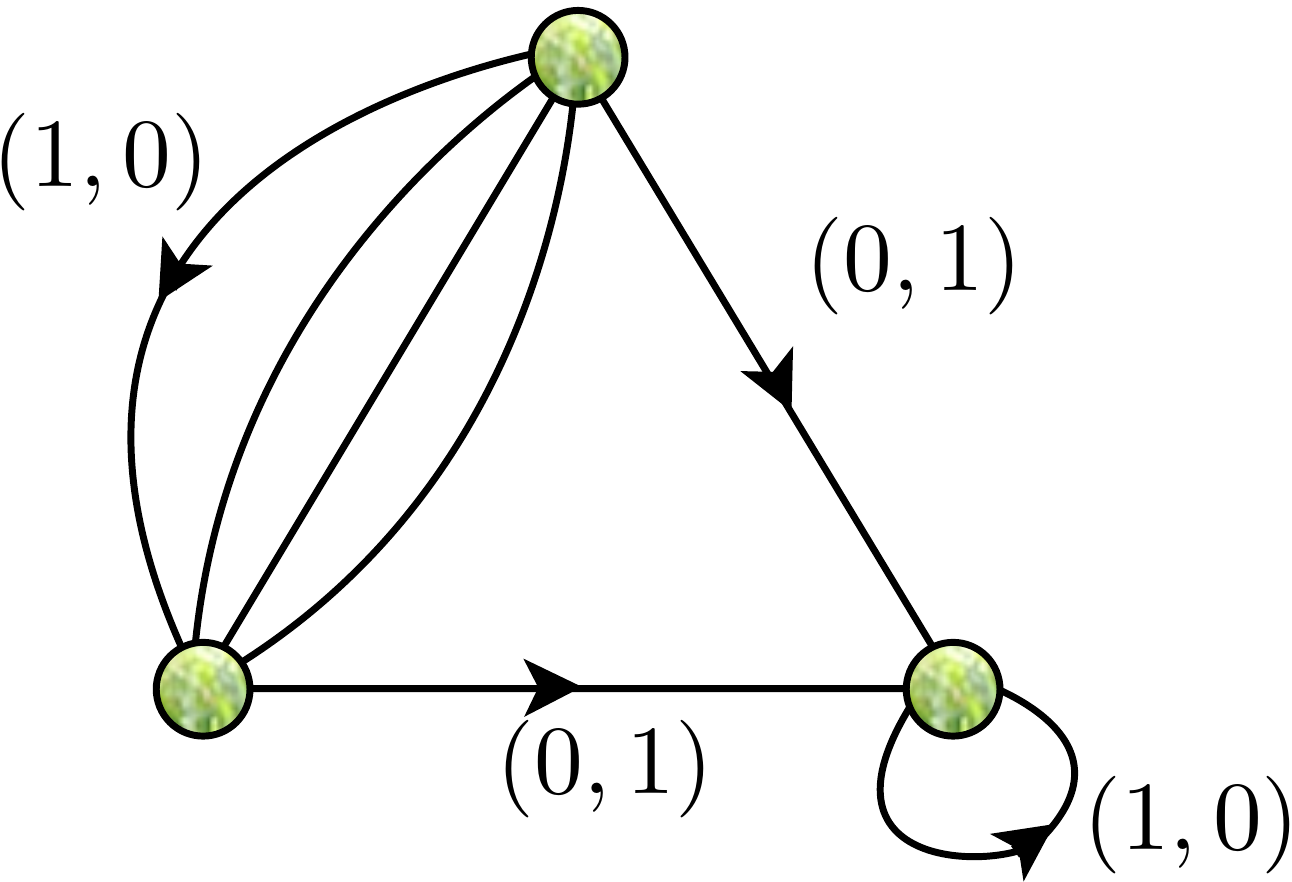}}\hspace{.25in}
\subfigure[$(H^m, q^m)$ in $\mathbb R^2$]{\includegraphics[width=2in]{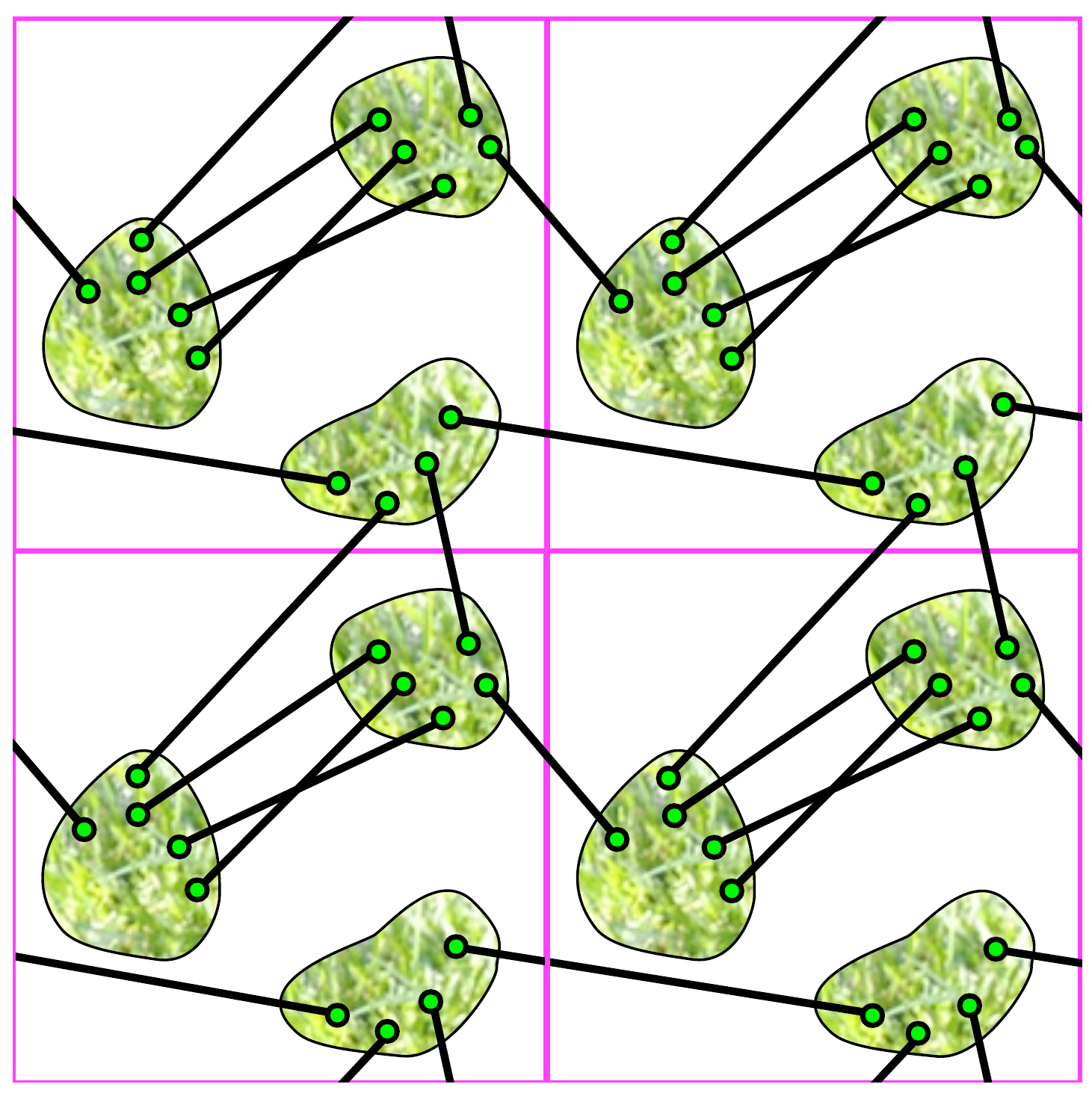}}
\caption{A two-dimensional body-bar periodic orbit graph, $m: E(H)^+ \rightarrow \mathbb Z^2$ (a), and a fragment of the corresponding derived periodic body-bar framework (b).}
\label{fig:bodyBarEx}
\end{center}
\end{figure}

Just as for bar-joint periodic orbit graphs, for a periodic body-bar orbit graph $\bbog$ we may define a derived periodic body-bar graph $H^m$ which is an infinite periodic structure (Figure \ref{fig:bodyBarEx}(b)). Suppose $\bbog$ is a periodic body-bar orbit graph. We define the {\it derived periodic body-bar graph} $H^m$ to be the infinite graph with vertex set $V(H) \times \mathbb Z^d$ and edge set $E(H) \times \mathbb Z^d$. If $e= \{B_{\alpha}, B_{\beta}; m_e\}$ is an edge of $E\bbog$, then the countably infinite set of edges
\[\{(B_{\alpha}, z), (B_{\beta}, z+m_e)\}, z \in \mathbb Z^d\]
forms an {\it orbit} of edges of $H^m$, connecting the orbit of the body $B_{\alpha}$ (denoted $(B_\alpha, z), z \in \mathbb Z^d$) with the appropriate copy (depending on $m_e$) of the orbit of the body $B_{\beta}$.

When we wish to explicitly refer to the underlying bar-joint framework, we denote by $\langle G_H, m_H \rangle$ the (bar-joint) periodic orbit framework obtained by replacing every vertex of $H$ with a finite (i.e. not periodic) isostatic bar-joint framework. The map $m_H$ will simply be $m$ on the edges of $H$, and will map all other edges to the zero vector (these are the edges of the isostatic frameworks making up each body (vertex) of $H$).  

{\it Body-bar periodic orbit frameworks} $\bbofw$ are body-bar orbit graphs, together with a map $q$, which maps the vertices of $\bbog$ to {\it bodies} on $\Tor^d$, and the edges of $\bbog$ to bars on $\Tor^d$. We assume that each body spans an affine subspace of $\mathbb R^d$ of dimension at least $d-1$, and furthermore that, up to translation, the body lies completely within the interval $[0,1)^d (= \Tor^d)$. This can also be expressed as a map $p_H$ directly on the vertices of the induced bar-joint framework $\pogind$. 

An {\it infinitesimal motion} of the body-bar periodic orbit framework $\bbofw$ can be defined as an infinitesimal motion of the underlying bar-joint framework $\pofwind$. That is, it is a map $u: V(G_H) \rightarrow \mathbb R^d$ such that the lengths of all edges $E\pogind$ are infinitesimally preserved:
		\[(u_{\alpha, i}- u_{\beta, j})\cdot(p_{\alpha, i}- p_{\beta, j} -m_{e})=0,\]
		for all edges $e = \{v_{\alpha, i}, v_{\beta, j}; m_{e}\}$ in $E\pogind$ (which corresponds in turn to the edge $\{B_{\alpha}, B_{\beta}; m_{e}\}$ in $E\bbog$).

An infinitesimal motion of $\bbofw$ on $\Tor^d$ is {\it trivial} if it is an infinitesimal translation. As for bar-joint periodic orbit frameworks, rotations are again not considered trivial motions in this conception, as described in Section \ref{sec:barJointOrbit}. $\bbofw$ is {\it infinitesimally rigid} on $\Tor^d$ if $(\pogind, p_H)$ is infinitesimally rigid on $\Tor^d$, that is, if its only infinitesimal motions are trivial (translations). 

Every body in $\mathbb R^d$ has ${d+1 \choose 2}$ degrees of freedom. Similar to the situation for bar-joint periodic orbit frameworks on $\Tor^d$, there is always a $d$-dimensional space of trivial motions (corresponding to the unit translations in $d$ directions). It follows that for minimal rigidity of a body-bar periodic orbit framework on $\Tor^d$, we require exactly $|E(H)| = {d+1 \choose 2}|V(H)| - d$ edges. 

\begin{prop}
Let $\bbofw$ be a $d$-periodic body-bar framework, with $|E(H)| = {d+1 \choose 2}|V(H)| - d$. Then the induced bar-joint framework $\pofwind$ will have $|E(G_H)| = d|V(G_H)| - d$. 
\label{prop:bbToBarJointComb}
\end{prop}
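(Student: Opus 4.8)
The plan is a direct edge count, separating the edges of the induced bar‑joint graph into two disjoint classes: those internal to the bodies, and those coming from the bars of $H$. Write $v = |V(H)|$ and suppose each body $B_\alpha$ ($\alpha = 1, \dots, v$) is realized, as in the definition of a body‑bar framework, by a finite isostatic bar‑joint framework on $n_\alpha$ vertices. Since distinct bodies use disjoint vertex sets, $|V(G_H)| = \sum_{\alpha=1}^{v} n_\alpha$, and every edge of $G_H$ either lies inside a single body or is one of the $|E(H)|$ edges obtained by lifting an edge of $H$ (loop or not) to $\langle G_H, m_H\rangle$. These two classes are disjoint because the bars of $H$ attach to distinct vertices and, together with the internal edges, keep $G_H$ simple, exactly as observed when $G_H$ was defined.

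Next I would count the internal edges. By hypothesis each body $B_\alpha$ is an isostatic finite framework in $\mathbb R^d$ spanning an affine subspace of dimension at least $d-1$; in particular $n_\alpha \ge d$, and by the definition of isostatic (equivalently, by Theorem~\ref{thm:finiteMatrix}) the body $B_\alpha$ has exactly $d n_\alpha - {d+1 \choose 2}$ edges. Summing over the $v$ bodies, the number of internal edges of $G_H$ is
\[
\sum_{\alpha=1}^{v}\left( d n_\alpha - {d+1 \choose 2}\right) = d\,|V(G_H)| - {d+1 \choose 2} v .
\]

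Finally I would assemble the two counts. Adding the $|E(H)|$ bar edges and substituting the hypothesis $|E(H)| = {d+1 \choose 2} v - d$ yields
\[
|E(G_H)| = d\,|V(G_H)| - {d+1 \choose 2} v + |E(H)| = d\,|V(G_H)| - {d+1 \choose 2} v + {d+1 \choose 2} v - d = d\,|V(G_H)| - d,
\]
which is the claimed identity. I do not expect any genuine obstacle here — the argument is pure bookkeeping. The only point that needs a word of care is that the per‑body edge count $d n_\alpha - {d+1 \choose 2}$ is legitimate, and this is guaranteed by the standing assumption (imposed when body‑bar frameworks were introduced) that every body is isostatic and spans a subspace of dimension at least $d-1$, so that the formula is valid and non‑negative.
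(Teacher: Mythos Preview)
Your argument is correct and is exactly the elementary bookkeeping the paper has in mind: it states only ``The proof is elementary'' and omits the details. Your separation into internal (isostatic-body) edges and bar edges, together with the per-body count $dn_\alpha - \binom{d+1}{2}$ and the hypothesis on $|E(H)|$, is the intended computation.
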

The proof is elementary.

%\begin{proof}
%For each body $B \in V$ we insert an isostatic framework $(V_B, E_B)$ satisfying $|E_B| = d|V_B| - {d+1 \choose 2}$. For ease of notation we write $G = G_H$. Then 
%\[|V(G)| = \sum_{i=1}^{|V(H)|} |V_i|, \textrm{\ and\ }\]
%\begin{eqnarray*}
%|E(G)| & = & |E(H)| + \sum_{i=1}^{|V(H)|}|E_i|\\
%& = & {d+1 \choose 2}|V(H)| - d + \sum_{i=1}^{|V(H)|}|E_i|\\
%& = & {d+1 \choose 2}|V(H)| - d + \sum_{i=1}^{|V(H)|}(d|V_i| - {d+1 \choose 2})\\
%& = & {d+1 \choose 2}|V(H)| - d  + d\sum_{i=1}^{|V(H)|}|V_i|  - {d+1 \choose 2}|V(H)|\\
%& = & d\sum_{i=1}^{|V(H)|}|V_i|  - d\\
%& = & d|V(G)|  - d.\\
%\end{eqnarray*}
%\end{proof}

\begin{rem}
Instead of replacing each vertex of $H$ by an isostatic finite framework, we could alternatively replace each vertex of $H$ with a minimally rigid periodic orbit framework. In this case, the new gain assignment $\bbm_H$ would inherit the gains on the periodic orbit frameworks corresponding to each body. We do not pursue this variation in the present work.
\end{rem}

We say that the body-bar orbit framework $\bbofw$ is {\it generic} on $\Tor^d$ if the induced bar-joint framework $(\pogind, p_H)$ is generic. That is, the attachment vertices (of the endpoints of the edges of $H$) and all other vertices added as part of the isostatic frameworks replacing the vertices of $H$ are generic in the sense of bar-joint frameworks. 

We say that the edges of a body-bar orbit graph $\bbog$ are (generically) {\it independent} if the corresponding edges of $\pogind$ are independent as rows in the bar-joint periodic orbit matrix $\R(\pogind, p_H)$ for generic positions $p_H$. Note that the edges of any of the isostatic frameworks which make up the bodies will automatically be independent, since they correspond to generic isostatic frameworks. 

In contrast to the situation for bar-joint periodic orbit frameworks, loops may be permitted to be independent in body-bar periodic orbit frameworks. The endpoints of the loop must correspond to distinct vertices in the underlying bar-joint framework (see Figure \ref{fig:bodyBarEx}(b)). Each body may have up to ${d+1 \choose 2} - d$ independent loops on $\Tor^d$. Note also that for bar-joint periodic orbit frameworks on a variable torus, loops are also possibly independent, but with more restrictions \cite{theran,myThesis}.

\begin{rem}
It may be desirable to define a rigidity matrix for periodic body-bar frameworks directly, without reverting to a bar-joint framework. One possible version uses the language of extensors and exterior algebra, as in White and Whiteley \cite{PureCondition}, or in the coordinatized presentation in \cite{JJBarBody}. A rigidity matrix for body-bar frameworks with a flexible lattice was recently described in \cite{PeriodicBarBody}, and a matrix for the fixed lattice could be deduced from this presentation if desired. 
\end{rem}

%INDUCTIVE CONSTRUCTIONS ON BAR JOINT
\subsection{Inductive constructions on 3-dimensional bar-joint periodic orbit frameworks}
\label{sec:barJointInductions}

In \cite{GeneratingIsostaticFrameworks}, Tay and Whiteley outline several methods for generating generically rigid bar-joint frameworks (finite frameworks). Some of the original ideas and definitions date back to Henneberg \cite{Henneberg}, who proved that all generically rigid frameworks in $\mathbb R^2$ can be generated by a sequence of vertex additions and edge splits. Building on the work of Tay and Whiteley, we now define periodic versions of these two moves, for $3$-dimensional bar-joint periodic orbit frameworks. \\

%vertex addition
\noindent {\bf Vertex addition.}

Let $\pog$ be a (bar-joint) periodic orbit graph. A {\it periodic vertex addition} (Figure \ref{fig:vertexAdd}) is the addition of a single new vertex $v_0$ to $V\pog$, and the edges 
\[Y = \{v_0, v_{i_1}; m_{01}\}, \{v_0, v_{i_2}; m_{02}\}, \{v_0, v_{i_3}; m_{03}\}\] 
to $E\pog$, subject to the following restrictions:
\begin{enumerate}[(i)]
	\item $m_{0j} \neq m_{0k}$ whenever $v_{i_j} = v_{i_k}$ (i.e. if $v_{i_j} = v_{i_k}$, then $|\gs(Y)| \geq 1$),
	\item if $v_{i_1}=v_{i_2}=v_{i_3}$, then $m_{i_j} \neq m_{i_k}$, and $|\gs(Y)| \geq 2$. 
\end{enumerate}
\begin{figure}[htbp]
\begin{center}
\includegraphics[width=2.5in]{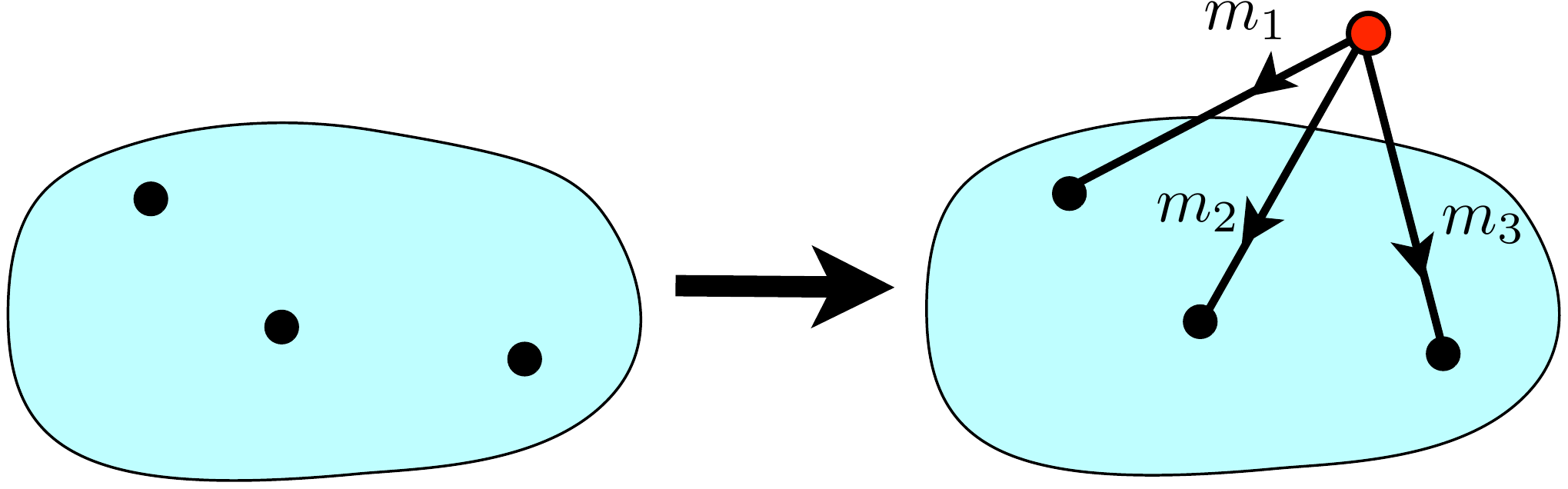}
\includegraphics[width=1in]{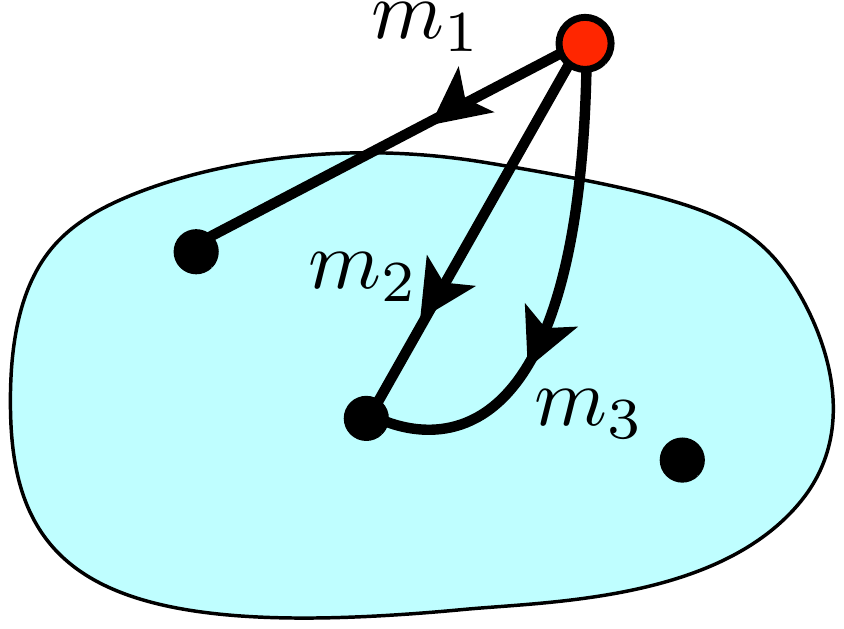}
\includegraphics[width=1in]{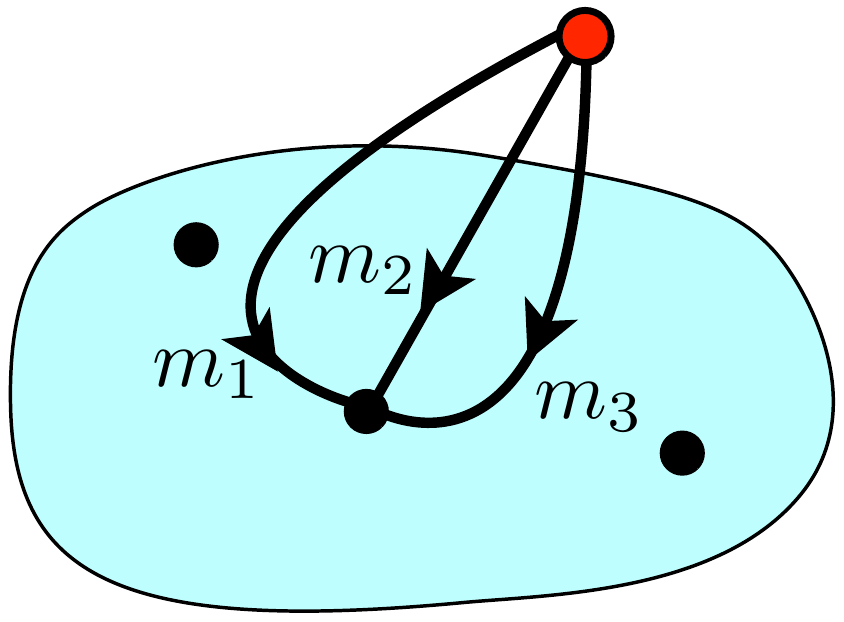}
\caption{Vertex addition on a bar-joint framework in $\mathbb R^3$. The three new edges may be connected to one, two or three distinct vertices in $G$.}
\label{fig:vertexAdd}
\end{center}
\end{figure}

Note that condition (i) simply ensures that no two edges connecting a single pair of vertices have the same gains, which would correspond to a pair of identical edges in $G^m$. The second restriction ensures that if all three edges connect vertex $v_0$ with a single other vertex in $\pog$, then the vertex addition does not produce a framework with degenerate geometry. For example, if the three gains $m_1, m_2, m_3$ are $(1,0,0), (2,0,0)$ and $(3,0,0)$, then the vertex addition will always add a vertex which is incident to a line, and therefore has a non-trivial motion. However, $(1,0,0), (2,0,0)$ and $(0,1,0)$ would be an example of a valid gain assignment. 

\begin{prop}
Let $\pog$ be a (bar-joint) periodic orbit graph, with $m:E^+ \rightarrow \mathbb Z^3$, and let $\pogp$ be the orbit graph created by performing a vertex addition on $\pog$, adding the vertex $v_0$. If $\pog$ is generically rigid on $\Tor^3$ then so is $\pogp$. 
\end{prop}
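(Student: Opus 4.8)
The plan is to show that a single vertex addition raises the rank of the periodic rigidity matrix by exactly $3$, which is precisely the amount by which the target rank $3|V|-3$ grows when one vertex is added. Fix an arbitrary generic position $\p'$ of $\pogp$ on $\Tor^3$, and let $\p$ be its restriction to $V(G)$. Since every non-identically-zero minor of $\R\pofw$ remains a non-identically-zero minor of $\R\pofwpr$ (such a minor uses only old edges and old columns, and the entries agree because the old edges are not incident to $v_0$), the position $\p$ is generic for $\pog$, so by hypothesis and Theorem \ref{thm:periodicMatrix} the matrix $\R\pofw$ has rank $3|V(G)| - 3$. Now order the rows of $\R\pofwpr$ with the three new edges of $Y$ last and its columns with the three columns of $v_0$ last. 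Because the old edges are not incident to $v_0$, this puts $\R\pofwpr$ into the block form
\[
\R\pofwpr = \begin{pmatrix} \R\pofw & 0 \\ C & B \end{pmatrix},
\]
where $B$ is the $3 \times 3$ matrix whose $j$-th row is $\pm(\p_0 - \p_{i_j} - m_{0j})$ for $j=1,2,3$. For any matrix in this block form one has $\rank \R\pofwpr \ge \rank \R\pofw + \rank B$, while $\rank \R\pofwpr \le 3|V(G')| - 3$ always holds because the $3$-dimensional space of infinitesimal translations lies in the kernel. Hence it suffices to prove that $B$ is invertible; then $\rank \R\pofwpr = (3|V(G)|-3)+3 = 3|V(G')|-3$, so $\pogp$ is infinitesimally rigid on $\Tor^3$ by Theorem \ref{thm:periodicMatrix}. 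As $\p'$ was an arbitrary generic position, this yields generic rigidity.

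Since $\p'$ is generic and $\det B$ is, up to sign, a minor of $\R\pofwpr$, it is enough to check that $\det B$ is not the zero polynomial in the coordinates of the relevant vertices (the gains being fixed integer vectors). This splits into three cases according to how many of $v_{i_1},v_{i_2},v_{i_3}$ coincide. If they are distinct, then $\p_{i_1},\p_{i_2},\p_{i_3}$ vary independently of one another and of $\p_0$, and an evaluation making the three rows of $B$ equal to the standard basis vectors shows $\det B \not\equiv 0$. If exactly two coincide, say $v_{i_1}=v_{i_2}$, then restriction (i) forces $m_{01}\neq m_{02}$, so rows $1$ and $2$ of $B$ differ by the fixed nonzero vector $m_{02}-m_{01}$; as the still-free parameter $\p_{i_3}$ varies, row $3$ leaves the plane spanned by rows $1$ and $2$, so $\det B \not\equiv 0$. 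If all three coincide, write $w := v_{i_1}=v_{i_2}=v_{i_3}$ and subtract row $3$ from rows $1$ and $2$: the determinant becomes $\det\bigl(m_{03}-m_{01},\,m_{03}-m_{02},\,\p_0-\p_w-m_{03}\bigr)$, an affine function of $\p_0$ that is non-constant --- hence not identically zero --- exactly when $m_{03}-m_{01}$ and $m_{03}-m_{02}$ are linearly independent, which is precisely restriction (ii), namely $|\gs(Y)| \ge 2$.

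The routine part is the rank bookkeeping. The one place where care is needed is the translation of the combinatorial restrictions (i) and (ii) on the vertex addition into the linear-algebraic statements that make $\det B$ not identically zero --- in particular, verifying that $|\gs(Y)| \ge 2$ really is equivalent to $m_{03}-m_{01}$ and $m_{03}-m_{02}$ being linearly independent (and $|\gs(Y)| \ge 1$ to $m_{01}\neq m_{02}$), by unwinding the definition of the gain space through spanning trees of $Y$. Everything else follows from Theorem \ref{thm:periodicMatrix}, the block structure of the rigidity matrix, and the fact that genericity is inherited by induced sub-configurations.
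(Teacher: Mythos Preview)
Your proof is correct and is the standard argument for vertex additions in rigidity theory: block-triangularize the rigidity matrix with the new vertex last, and show the $3\times 3$ corner block $B$ has nonvanishing determinant as a polynomial in the vertex coordinates. The case analysis translating restrictions (i) and (ii) into linear-algebraic conditions on the gain differences is accurate; in particular your observation that for the two-vertex, three-edge graph $Y$ the gain space is generated by the cycle gains $m_{0j}-m_{0k}$, so that $|\gs(Y)|\ge 2$ is exactly the linear independence of $m_{03}-m_{01}$ and $m_{03}-m_{02}$, is the right unwinding. One cosmetic point: in Case~2 your phrase ``the plane spanned by rows $1$ and $2$'' presupposes those rows are independent, which need not hold for every $(\p_0,\p_w)$; but since you only need \emph{some} specialization with $\det B\neq 0$, first choose $\p_0-\p_w$ so that row~$1$ is not parallel to the fixed nonzero vector $m_{02}-m_{01}$, and then vary $\p_{i_3}$ as you describe.

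As for comparison with the paper: there is nothing to compare, since the paper omits the proof entirely, citing it as a straightforward extension of the two-dimensional version in \cite{myThesis}. Your argument is precisely the expected extension --- the block decomposition and the genericity-of-restrictions step are dimension-independent, and the only place where $d=3$ enters is the three-way case split on how many of the neighbours $v_{i_1},v_{i_2},v_{i_3}$ coincide, which is the direct analogue of the two-way split in $d=2$.
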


The proof of this fact is a straightforward extension of the proof of the two-dimensional version presented in \cite{myThesis}, and we omit it.\\

%edge splitting
\noindent {\bf Edge splitting.}\\
Let $\pog$ be a periodic orbit graph, and let $e = \{v_{1}, v_{2}; m_e\}, m_e \in \mathbb Z^3$ be an edge in $E\pog$. A {\it periodic edge split} $\pogp$ of $\pog$ is a graph with vertex set $V\pog \cup \{v_0\}$ and edge set consisting of all of the edges of $E\pog$ except $e$, together with the four additional edges
\[\{v_1, v_0; (0,0,0)\}, \{v_0, v_2; m_e\}, \{v_0, v_{i_1}; m_{01}\}, \{v_0, v_{i_2}; m_{02}\},\]
where the gains are subject to the same restrictions as for vertex additions (Figure \ref{fig:edgeSplit}). Precisely:
\begin{enumerate}[(i)]
	\item if any two of the added edges have the same endpoints, their gains are distinct;
	\item if any three of the added edges have the same endpoints, then each pair of edges satisfies (i), and the gain space of the three edge, two vertex graph has dimension at least 2. 
\end{enumerate}
\begin{figure}[htbp]
\begin{center}
\includegraphics[width=3.5in]{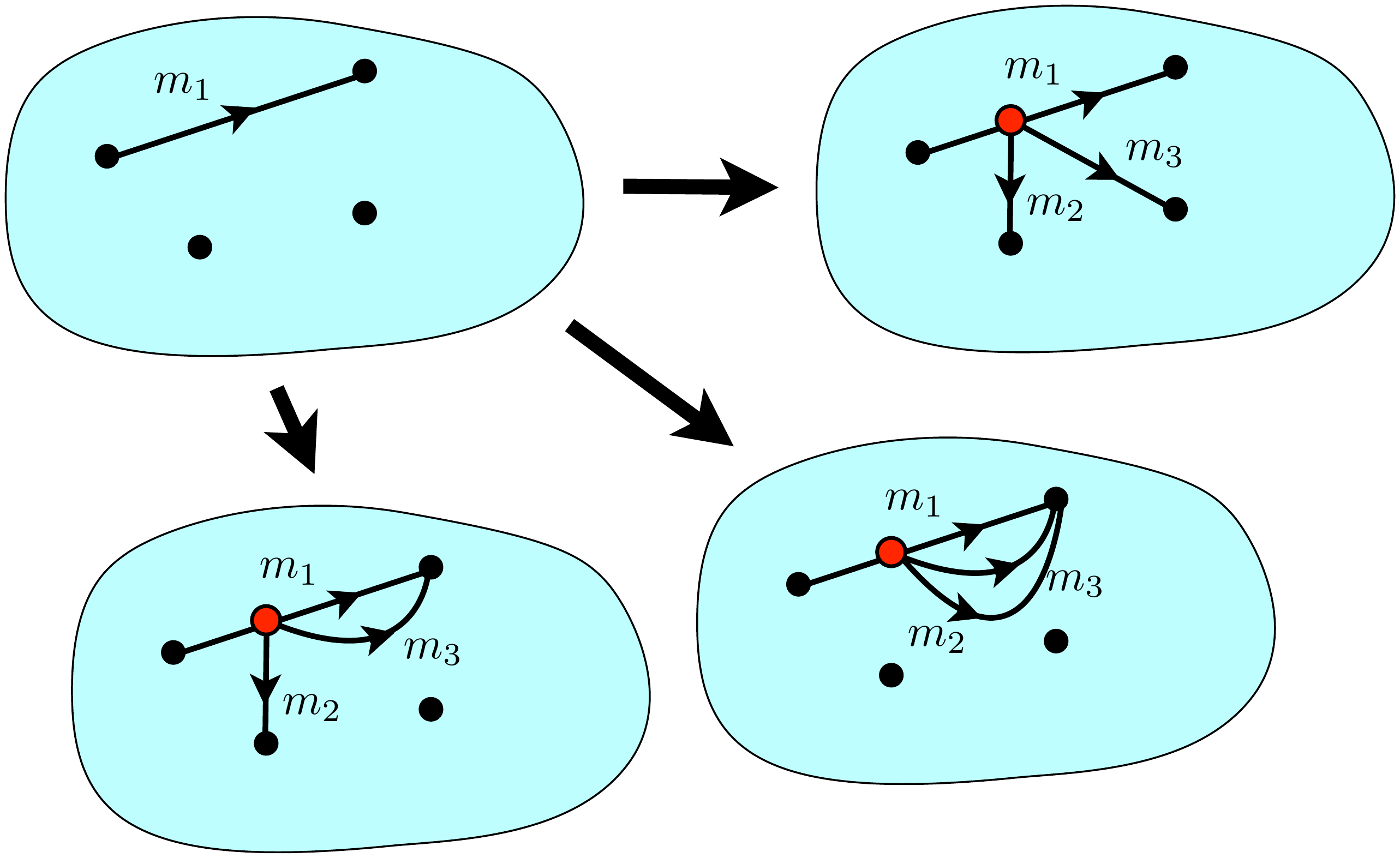}
\caption{Edge split on a single edge of a bar-joint framework in $\mathbb R^3$. The added edges may connect with distinct vertices, or the end-points of the split edge, provided that we do not introduce redundant edges.  }
\label{fig:edgeSplit}
\end{center}
\end{figure}

\begin{prop}
Let $\pog$ be a periodic orbit graph, with $m:E^+ \rightarrow \mathbb Z^3$, and let $\pogp$ be the orbit graph created by performing an edge split on $\pog$, adding the vertex $v_0$. If $\pog$ is generically rigid on $\Tor^3$ then so is $\pogp$. 
\end{prop}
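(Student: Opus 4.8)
The plan is to adapt to the fixed torus $\Tor^3$ the classical ``edge-split'' (Henneberg II) argument of Tay and Whiteley: place the new vertex at a carefully chosen degenerate position at which the two ``halves'' of the split edge become collinear, and read off that the rigidity matrix already has full rank there. It is convenient to first reduce to the case in which $\pog$ is minimally rigid on $\Tor^3$. Since adding edges cannot destroy generic rigidity, let $\pogna \subseteq \pog$ be a spanning minimally rigid subgraph. If the split edge $e$ lies in $\pogna$, then performing the same edge split on $\pogna$ produces a spanning subgraph of $\pogp$, and we are done once the minimally rigid case is settled. If $e \notin \pogna$, then $\pogp$ contains $\pogna$ together with the new vertex $v_0$ joined to $\pogna$ by three of the four new edges, say $\{v_1, v_0; (0,0,0)\}$, $\{v_0, v_2; m_e\}$, $\{v_0, v_{i_1}; m_{01}\}$; conditions (i) and (ii) for the edge split imply that these three edges satisfy the hypotheses of the vertex-addition proposition, so $\pogp$ is generically rigid by that proposition. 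Hence from now on assume $|E(\pog)| = 3|V| - 3$. A direct count gives $|E(\pogp)| = 3|V'| - 3$, where $V' = V(\pog) \cup \{v_0\}$, so by Theorem~\ref{thm:periodicMatrix} it suffices to produce a single position at which $\R(\pogp)$ has rank $3|V'| - 3$, equivalently at which $\pogp$ is infinitesimally rigid.

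For the degenerate position I would choose a generic configuration $p$ of the original vertices (so that $\pog$ is infinitesimally rigid) and place $v_0$ strictly between $p_{v_1}$ and $p_{v_2} + m_e$ on the line through those two points in $\mathbb R^3$, otherwise generically along that line. Let $u$ be an infinitesimal motion of $\pogp$ at this position. The equations of the two subdivision edges $\{v_1, v_0; (0,0,0)\}$ and $\{v_0, v_2; m_e\}$ say that $(u_{v_1} - u_{v_0})$ and $(u_{v_0} - u_{v_2})$ are each orthogonal to the common direction $d := p_{v_1} - p_{v_2} - m_e$ of the two collinear bars; adding them yields $(u_{v_1} - u_{v_2}) \cdot d = 0$, which is precisely the equation of the removed edge $e$. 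Therefore the restriction of $u$ to $V(\pog)$ is an infinitesimal motion of $\pog$ and hence a translation; subtracting it, we may assume $u|_{V(\pog)} = 0$. The four edges incident to $v_0$ then force $u_{v_0}$ to be orthogonal to the three vectors $d$, $\;p_{v_0} - p_{v_{i_1}} - m_{01}$, and $p_{v_0} - p_{v_{i_2}} - m_{02}$, so the proof reduces to showing that these three vectors span $\mathbb R^3$.

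I expect this spanning claim to be the main obstacle, and its proof requires a case analysis on the coincidences among $v_{i_1}, v_{i_2}, v_1, v_2$. When the four vertices are distinct the three vectors involve three independent generic points and span $\mathbb R^3$ automatically. When, say, $v_{i_1}$ coincides with $v_1$ or $v_2$, the corresponding vector becomes a generic vector in the direction $d$ translated by an integer gain ($-m_{01}$), and conditions (i) and (ii) --- which force the relevant gains to be nonzero, pairwise distinct, or to span a $2$-plane --- are exactly what prevents a forced dependence, once we also use the remaining freedom in $p$ and in the position of $v_0$ along the splitting line. The genuinely delicate situation is when both free edges return to $\{v_1, v_2\}$ with gains lying in a single parallel class: there the degenerate position above need not pin $v_0$, and I would instead argue directly in the generic rigidity matroid, writing $\pogp = \pogst - e + r$, where $\pogst$ is obtained from $\pog$ by a valid vertex addition using three of the new edges and $r$ is the fourth, and showing that the matroid exchange of $e$ for $r$ is non-degenerate at generic configurations. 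Carrying out this exchange argument (or, equivalently, finding a good degenerate position in this last case) is where the real work lies; the remaining cases are routine manipulations of the rigidity matrix.
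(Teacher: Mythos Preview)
Your approach and the paper's are built on the same idea --- place $v_0$ on the line of $e$ and exploit collinearity --- but the paper organizes it more modularly. Instead of analysing the kernel of $\R(\pogp)$ directly at the degenerate position, the paper first performs a periodic \emph{vertex addition} on $\pog$, adjoining $v_0$ with the three edges $\{v_1,v_0;(0,0,0)\}$, $\{v_0,v_{i_1};m_{01}\}$, $\{v_0,v_{i_2};m_{02}\}$, and cites the already-established vertex-addition proposition to conclude that the intermediate orbit graph $\pogst$ (which still contains $e$) is generically rigid. It then chooses $p_0$ on the line of $e$ and invokes a separate triangle-exchange proposition: since the rows for $e$, $\{v_1,v_0;0\}$ and $\{v_0,v_2;m_e\}$ are linearly dependent there, one may swap $e$ for $\{v_0,v_2;m_e\}$ to obtain $\pogp$. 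This exchange $\pogp = \pogst - e + r$ is precisely the matroid-exchange fallback you outline for your delicate case; the paper simply uses it as the main device throughout, thereby pushing your case analysis on coincidences among $v_1,v_2,v_{i_1},v_{i_2}$ into the vertex-addition black box and rendering your preliminary reduction to the minimally rigid case unnecessary. What your direct computation makes explicit --- and the paper's modular packaging leaves implicit --- is the one configuration (both free edges returning to $\{v_1,v_2\}$ with the two relevant gain differences parallel) at which the collinear position fails to pin $v_0$; that subtlety sits beneath both arguments.
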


\begin{proof}

Let $p$ be a generic position of $\pog$, and let $(G^m, p^m)$ be the derived periodic framework in $\mathbb R^3$. Suppose we wish to perform the edge split on the edge $e = \{v_{1}, v_{2}; m_e\}$, which would add the edges listed above. 

We begin by performing a vertex addition on $\pog$, adding vertex $v_0$ and the three edges  
\[Y= \{v_1, v_0; (0,0,0)\}, \{v_0, v_{i_1}; m_{01}\}, \{v_0, v_{i_2}; m_{02}\}.\]
We select a position $p_0$ for the vertex $v_0$ on $\Tor^3$ so that $p_0$ lies on the line of the edge $e$ on $\Tor^3$. Equivalently, the orbit of vertices $(v_0, z), z \in \mathbb Z^3$ in $V(G^m, p^m)$ lie along the lines of the orbit of edges $(e, z)$ in $E(G^m, p^m)$. 

We now use Proposition \ref{prop:triangleReplacement} to replace the pair of edges 
\[e = \{v_{1}, v_{2}; m_e\}, \{v_1, v_0; (0,0,0)\}\]
with the pair
\[\{v_{0}, v_{2}; m_e\}, \{v_1, v_0; (0,0,0)\},\]
which maintains generic rigidity and completes the proof. 
\end{proof}

The following proposition is a restricted version of the Isostatic Substitution Principle, see \cite{GeneratingIsostaticFrameworks} for example. It is sometimes referred to as the ``Triangle Exchange," from the operation on a collinear triangle. 

\begin{prop}
Let $\pofw$ be a generically rigid periodic orbit framework on $\Tor^3$, with edge $e = \{v_{1}, v_{2}; m_e\}$. Let $p_0$ be a point on the line connecting the positions $p_1$ and $p_2+m_e$. Then replacing the pair of edges 
\[e = \{v_{1}, v_{2}; m_e\}, \{v_1, v_0; (0,0,0)\}\]
with the pair
\[\{v_{0}, v_{2}; m_e\}, \{v_1, v_0; (0,0,0)\},\]
will result in another generically rigid periodic orbit framework on $\Tor^3$
\label{prop:triangleReplacement}
\end{prop}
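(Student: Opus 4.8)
The plan is to reduce the statement to a single linear‑algebra identity among three rows of the periodic rigidity matrix, and then to read off the conclusion from Theorem~\ref{thm:periodicMatrix}. The key observation is that the exchange changes nothing in $\pofw$ except that the edge $e=\{v_1,v_2;m_e\}$ is deleted and $e'=\{v_0,v_2;m_e\}$ is inserted, while $f=\{v_1,v_0;(0,0,0)\}$ belongs to both frameworks. So it suffices to show that, at a position $p$ in which $p_0$, $p_1$ and $p_2+m_e$ are distinct and collinear (which we may assume, since otherwise one of these edges is degenerate), the rows of $\R\pofw$ indexed by $e$ and by $e'$ span the same subspace once the row of $f$ is adjoined. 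Then $\R\pofw$ and $\R(\pofwpr)$ have the same row space, hence the same rank, and because they act on the same vertex set the two frameworks are simultaneously infinitesimally rigid on $\Tor^3$ or not.

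For the identity, put $d=p_1-(p_2+m_e)\neq 0$ and, using collinearity, write $p_0=(1-\mu)p_1+\mu(p_2+m_e)$ with $\mu\notin\{0,1\}$ (the excluded values being exactly $p_0=p_1$ and $p_0=p_2+m_e$). Then $p_1-p_0=\mu d$ and $p_0-(p_2+m_e)=(1-\mu)d$ are both multiples of $d$, and a block‑by‑block comparison of the corresponding rows $r_e,r_f,r_{e'}$ of the periodic rigidity matrix yields
\[
r_e=\tfrac{1}{\mu}\,r_f+\tfrac{1}{1-\mu}\,r_{e'}.
\]
All three coefficients in this relation are nonzero, and $r_e,r_f$ are themselves independent because $v_0\neq v_2$, so $r_e$ has a nonzero entry in the block of $v_2$ while $r_f$ does not. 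Hence $\mathrm{span}\{r_e,r_f\}=\mathrm{span}\{r_{e'},r_f\}$, which is precisely what is needed. It follows that $\R(\pofwpr)$ and $\R\pofw$ have identical row spaces; since $\pofw$ is infinitesimally rigid, Theorem~\ref{thm:periodicMatrix} gives $\rank\R\pofw=3|V(G)|-3$, so $\rank\R(\pofwpr)=3|V(G)|-3=3|V(G')|-3$ and the exchanged framework is infinitesimally rigid on $\Tor^3$. As $3|V(G')|-3$ is the largest rank any realization of $G'$ can achieve (there is always a $3$‑dimensional space of trivial translations), this maximum is attained at $p$, hence at every generic position of $G'$, so $G'$ is generically rigid on $\Tor^3$.

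The one point deserving care — the natural place to locate the ``main obstacle'' — is the genericity bookkeeping: one must know that placing $p_0$ on the line $\ell$ through $p_1$ and $p_2+m_e$, rather than at a fully generic point, still leaves the starting framework infinitesimally rigid, so that the rank above really equals $3|V(G)|-3$. In the edge‑split application this is settled by a short computation: with the remaining vertices generic, $v_0$ is pinned by its three incident edges $f,\{v_0,v_{i_1};m_{01}\},\{v_0,v_{i_2};m_{02}\}$ exactly when the determinant of their three direction vectors is nonzero, and parametrizing $\ell$ as $p_0=p_1+t\,d$ one sees that this determinant is independent of $t$ (the $t\,d$ contributions in the last two columns lie along the first column $d$) and equals a polynomial in the generic coordinates of $v_1,v_{i_1},v_{i_2}$ that does not vanish identically. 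Thus $v_0$ remains pinned along all of $\ell$, the base framework is infinitesimally rigid at the specialized position, and the argument of the previous paragraph applies; if one instead reads the hypothesis of the Proposition as literally positing infinitesimal rigidity of $\pofw$ at the given position, this step is vacuous.
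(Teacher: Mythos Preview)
Your argument is correct and follows essentially the same idea as the paper: the three rows $r_e,r_f,r_{e'}$ are linearly dependent because $p_0,p_1,p_2+m_e$ are collinear, so the pair $\{r_e,r_f\}$ may be exchanged for $\{r_{e'},r_f\}$ without changing the row space. You simply make this explicit by writing down the relation $r_e=\tfrac{1}{\mu}r_f+\tfrac{1}{1-\mu}r_{e'}$, whereas the paper just asserts the dependence; your additional paragraph on the genericity bookkeeping (ensuring the framework is still infinitesimally rigid with $p_0$ constrained to the line) is a point the paper leaves implicit, and your treatment of it is sound, though note that the $3\times 3$ determinant you compute is proportional to $t$ rather than literally independent of it---what is $t$-independent is its vanishing/nonvanishing for $t\neq 0$, which is all you need.
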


\begin{proof}
Since we have chosen the point $p_0$ so that the triple of edges 
\[e = \{v_{1}, v_{2}; m_e\}, \{v_1, v_0; (0,0,0)\}, \{v_{0}, v_{2}; m_e\}\]
is collinear, these edges therefore form a dependent set of rows in the rigidity matrix. Hence we may replace any pair for any other pair. 
\end{proof}

\begin{rem}
The preceeding definitions of vertex addition and edge split are natural in the following sense: they correspond to ``classical" vertex additions and edge splits on the (infinite) derived periodic framework $(G^m, p^m)$, in which each move adds/modifies whole orbits of vertices and edges. In addition, Proposition \ref{prop:triangleReplacement} corresponds to replacing whole orbits of edges with other (equivalent) orbits in $(G^m, p^m)$. 
\end{rem}

%MAIN RESULT
\section{Statement of main result}
\label{sec:mainResult}

Before we state our main result, we require one final definition, namely that of the gain space of a gain graph. 

The {\it cycle space} $\C = \C(G)$ of $G$ is the subspace of the edge space of a graph $\mathcal E(G)$ (see \cite{Diestel}) spanned by the (edge sets of the) cycles of $G$. Suppose $\pog$ is a gain graph where $\C(G)$ is the cycle space of the (undirected) graph $G$. The {\it net gain} on a cycle of $G$ is determined by summing the gains on each edge of a participating edge, taking the positive or negative gain depending on the direction of traversal. The {\it gain space} $\gs(G)$ is the vector space (over $\mathbb Z$) spanned by the net gains on the cycles of $\C(G)$. Depending on the context, we will either write $\gs(G)$ or $\gs(Y)$, where $Y \subseteq E(G)$ is a subset of the edge set of $G$. 

In contrast to cycles in directed graphs, we permit re-direction of the edges of a gain graph provided that they are accompanied by a relabelling of the gains on the edges as well. In this way, we should think of cycles in the gain graph as corresponding one-to-one with cycles in the base graph.

We may now state our main result.
\begin{thm}
$\bbog$ is a generically minimally rigid body-bar periodic orbit graph on $\Tor^3$ if and only if 
\begin{enumerate}
	\item $|E(H)| = 6|V(H)| - 3$
	\item for all non-empty subsets $Y \subset E(H)$ of edges
		\begin{equation}
			|Y| \leq 6|V(Y)| - 6 + \sum_{i=1}^{|\gs(Y)|}(3-i). \tag{$\ast$}
		\label{eq:gainSparse}
		\end{equation}	
\end{enumerate}
\label{thm:main}
\end{thm}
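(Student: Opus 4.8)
The plan is to prove the two implications separately; necessity is short, and the sufficiency direction carries almost all of the weight.

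\emph{Necessity.} Assume $\bbofw$ is generically minimally rigid on $\Tor^3$. Condition~1 is forced by minimal rigidity together with Theorem~\ref{thm:periodicMatrix} (via Proposition~\ref{prop:bbToBarJointComb}): the induced bar-joint rigidity matrix has rank $3|V(G_H)|-3$, and independence of all its rows forces $|E(G_H)| = 3|V(G_H)|-3$, equivalently $|E(H)| = 6|V(H)|-3$. For ($\ast$), fix a nonempty $Y \subseteq E(H)$ and consider the sub-body-bar framework carried by the bodies $V(Y)$ and the bars of $Y$ only; since the rows of $Y$ must be independent, $|Y|$ is at most the rank of this matrix, so it suffices to bound that rank above by the right-hand side of ($\ast$). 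I would do this by exhibiting a space of motions of the $|V(Y)|$ unconstrained bodies detected by no bar of $Y$: the $3$ common infinitesimal translations, together with the common infinitesimal rotations whose axis is parallel to every element of $\gs(Y)$. After a gain switching that puts a spanning forest of $Y$ in zero gauge, every edge gain of $Y$ lies in $\gs(Y)$, and one checks that a rotation with axis parallel to $\gs(Y)$ annihilates each such constraint; the dimension of this rotation space is $3,1,0,0$ according as $|\gs(Y)| = 0,1,\ge 2$, and $3 + (\text{that number}) = 6 - \sum_{i=1}^{|\gs(Y)|}(3-i)$. Hence the rank is at most $6|V(Y)| - 6 + \sum_{i=1}^{|\gs(Y)|}(3-i)$, giving ($\ast$). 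Only the inequality is needed, so it is irrelevant whether these are all the undetected motions.

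\emph{Sufficiency.} Call a body-bar orbit graph satisfying~1 and~($\ast$) \emph{$(6,3)$-gain-tight}. As orientation, the count $6|V(H)|-3 = 3(|V(H)|-1) + 3|V(H)|$ is exactly that of a union of three spanning trees and three spanning ``gain-unicyclic'' subgraphs, the combinatorial shadow of the body-bar rigidity matroid on $\Tor^3$ (compare Tay's theorem~\cite{Tay2} and the two-dimensional periodic characterization~\cite{myThesis}). Following the paper's inductive philosophy, and using that rigidity is a generic property (so it suffices to produce one generic realization at each stage), I would prove sufficiency by a Henneberg-type argument in four steps. (S1) Introduce a \emph{body addition} — add one new body joined to existing bodies by six bars, subject to gain conditions generalizing conditions (i)--(ii) for the bar-joint vertex addition and edge split of Section~\ref{sec:barJointInductions}, which guarantee the new body's three rotational freedoms are pinned — and a family of \emph{edge-split}-type operations that delete a bar and insert a new body incident to it and to further bars; each such operation raises $|V(H)|$ by one and $|E(H)|$ by six, so Condition~1 is preserved. (S2) Prove the \emph{reduction lemma}: every $(6,3)$-gain-tight graph with at least two vertices arises from a smaller $(6,3)$-gain-tight graph by one of these operations. (S3) Prove each forward operation preserves generic minimal rigidity on $\Tor^3$ by passing to the induced bar-joint periodic orbit framework $\pogind$, where a body addition unfolds as a sequence of periodic vertex additions and an edge-split operation unfolds as periodic vertex additions, periodic edge splits and triangle replacements (Proposition~\ref{prop:triangleReplacement}), each rigidity-preserving by Section~\ref{sec:barJointInductions}; edge count and independence are preserved by construction. (S4) Check the base cases — a single body with three loops whose gains have gain space of dimension at least two (e.g.\ a $\Z$-basis of $\Z^3$) — are generically minimally rigid on $\Tor^3$: three bars in linearly independent directions impose three generically independent constraints on the rotational velocity of one generic body, forcing it to zero. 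Induction on $|V(H)|$ then finishes sufficiency, which together with necessity proves Theorem~\ref{thm:main}.

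\emph{Main obstacle.} The crux is the reduction lemma (S2), which is genuinely a gain-graph refinement of the $(k,\ell)$-sparse / pebble-game theory. One must (a) locate a removable body: a count on $\sum_v \deg(v) = 12|V(H)|-6$ produces a low-degree body (loops counted twice), but because the bound in ($\ast$) depends on $\gs$, the usual degree dichotomy has to be refined to control how deleting a body and its incident bars changes the gain spaces of the various subsets; (b) when the removed body has degree larger than six, reattach the correct number of bars among its former neighbours and show ($\ast$) guarantees a legal choice of new bars \emph{and} of their gains — a matroid-exchange argument inside the gain matroid; and (c) carry out the gain bookkeeping so that the smaller graph is again $(6,3)$-gain-tight and, run forwards, the operation meets the gain conditions required in (S3). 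The most delicate cases are loops and multiple edges, and subsets that are exactly tight in ($\ast$) with one- or two-dimensional gain space, where the interaction between the combinatorial count and the gain space is subtle; this is where I expect most of the work to concentrate.
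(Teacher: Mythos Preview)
Your proposal is correct and takes essentially the same approach as the paper: necessity via the bar-joint count (the paper's Theorem~\ref{thm:dNecessary} is exactly your rotation-axis argument), and sufficiency by a Henneberg-type induction whose forward moves are shown rigidity-preserving by unfolding into periodic bar-joint vertex additions, edge splits and triangle replacements, with the combinatorial reduction lemma as the crux. The paper's operations are the gain-modified Fekete--Szeg\H{o} edge pinches $K^*(6,n,j)$ (pinching up to five bars simultaneously, so your ``delete a bar'' should read ``delete up to five bars''), and the heart of the reduction (S2) is a case-by-case verification over the possible gain-space dimensions (Propositions~\ref{prop:union} and~\ref{prop:unionNeg}) that unions of $(\ast)$-tight edge sets remain $(\ast)$-tight --- precisely the obstacle you anticipate.
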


We remark that we have chosen to state this result in terms of edge-induced subgraphs to distinguish it from the usual approach in rigidity theory of using vertex-induced subgraphs. 

Observe that the first two terms of the sparsity expression (\ref{eq:gainSparse}) correspond to necessary conditions for the independence of a finite body-bar framework. Recall that for such a framework to be independent we must have that $|Y| \leq 6|V(Y)| - 6$ for all $Y \subseteq E(G)$. In this way, (\ref{eq:gainSparse}) indicates that we may add additional edges (given by the term $\sum(3-i)$), depending on the dimension of the gain space, $|\gs(Y)|$. Furthermore, it is clear that any graph satisfying conditions 1 and 2 will have the property that for all vertex-induced subgraphs, for $X \subseteq V(H)$, will satisfy $|E(X)| \leq 6|X| - 3$. That is, $H$ is a $[6,3]$-graph (see Section \ref{sec:FS}). 

To further motivate this result, we consider an example. 
\begin{ex}
For finite body-bar frameworks, there may be at most six edges between any two bodies (see Figure \ref{fig:twoBodyExample}(a)). For body-bar periodic orbit frameworks, there may be at most {\it nine} edges between any two bodies (see Figure \ref{fig:twoBodyExample}(b)). By (\ref{eq:gainSparse}), labels on these edges must have the property that $|\langle m_1, m_2, m_3 \rangle | \geq 2$, and $|\langle m_i, m_j \rangle| \geq 1$. For example, $(1,0,0), (1,0,0), (0,1,0)$ is a valid labelling of $m_1, m_2$ and $m_3$ respectively. In this case, any set $Y$ of seven or eight edges will have $|\gs(Y)|\geq 1$, and the full set of nine edges has $|\gs(E)| = 2$. 
\begin{figure}[htbp]
\begin{center}
\subfigure[finite body-bar framework]{\includegraphics[width=1in]{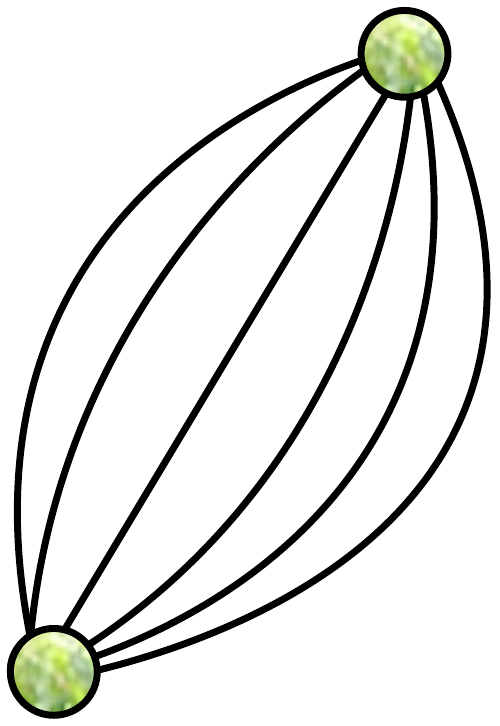}}\hspace{.5in}
\subfigure[body-bar periodic orbit framework]{\includegraphics[width=1.5in]{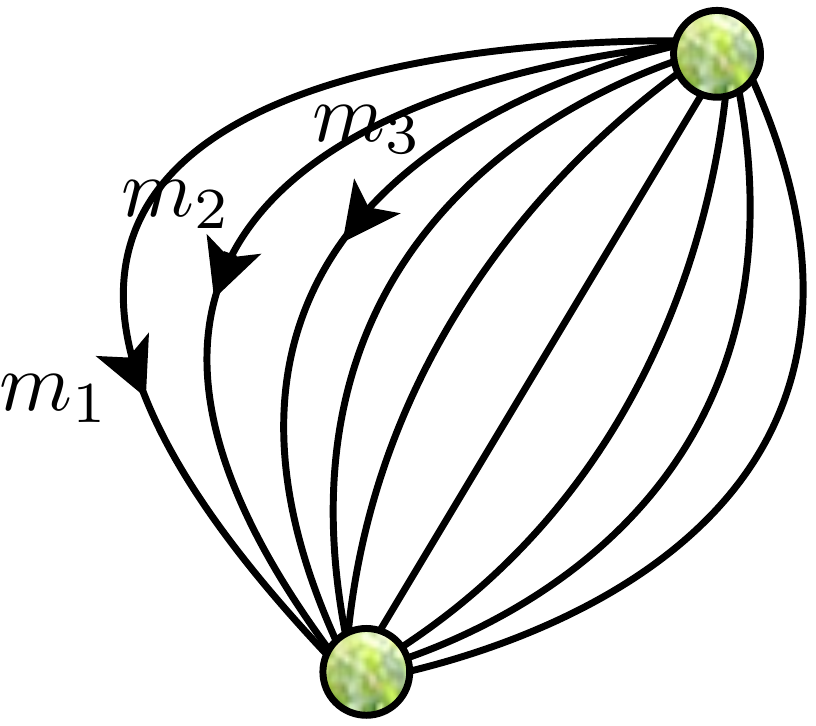}}
\caption{For finite body-bar frameworks in three dimensions, we may have at most six edges between any two bodies (a). For periodic orbit body-bar frameworks, we may have at most nine edges between any two bodies (b), provided the edges satisfy the sparsity condition (\ref{eq:gainSparse}).}
\label{fig:twoBodyExample}
\end{center}
\end{figure}
\qed
\end{ex}
\begin{rem} Counter-intuitively, we do not require that $|\gs(G)|=3$ for rigidity. In other words, it is possible for a disconnected derived periodic body-bar framework $(H^m, q^m)$ to be infinitesimally rigid in $\mathbb R^d$. This is due to the extra constraints imposed by the fixed lattice or torus. \end{rem}

%Necessary conditions
\subsection{Necessary conditions for generic rigidity on $\Tor^3$}
To see why the conditions of Theorem \ref{thm:main} are necessary, we use an earlier result from the rigidity of bar-joint periodic orbit frameworks on $\Tor^d$, as outlined by Theorem \ref{thm:dNecessary}. Recall that the minimal number of edges for a periodic orbit bar-joint framework to be infinitesimally rigid on $\Tor^d$ is $d|V| - d$. 
\begin{thm}[\cite{ThesisPaper1}]
Let $\pog$ be a minimally rigid bar-joint framework on $\Tor^d$. Then for all subsets of edges $Y \subseteq E$, 
\begin{equation}
|Y| \leq d|V(Y)| - {d+1 \choose 2} + \sum_{i=1}^{|\gs(Y)|} (d-i).
\label{eq:dNecessary}
\end{equation}
\label{thm:dNecessary}
\end{thm}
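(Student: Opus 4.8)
The plan is to read the sparsity count off the linear independence of the rows of the periodic rigidity matrix. Since $\pog$ is minimally rigid on $\Tor^d$ we have $|E(G)| = d|V(G)| - d$, so by Theorem \ref{thm:periodicMatrix} the rank of $\R\pofw$ equals $d|V(G)| - d = |E(G)|$, and hence all of its rows are linearly independent. In particular, for every non-empty $Y \subseteq E(G)$ the rows indexed by $Y$ are independent; as these rows are supported on the $d|V(Y)|$ columns associated with $V(Y)$, we get $|Y| = \rank \R_Y$, where $\R_Y$ denotes the corresponding $|Y| \times d|V(Y)|$ submatrix. So it suffices to exhibit a subspace of $\ker \R_Y$ of dimension $\binom{d+1}{2} - \sum_{i=1}^{|\gs(Y)|}(d-i)$. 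Writing $k = |\gs(Y)|$, the elementary identity $\binom{d+1}{2} - \sum_{i=1}^{k}(d-i) = d + \binom{d-k}{2}$ (valid for $0 \le k \le d$) says that we should look for the $d$ infinitesimal translations together with a $\binom{d-k}{2}$-dimensional space of infinitesimal rotations.

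First I would carry out a gauge (switching) reduction. Replacing the gain $m_e$ of each edge $e = \{v_i, v_j; m_e\}$ by $m_e + \theta(v_i) - \theta(v_j)$ for a potential $\theta : V(G) \to \mathbb Z^d$, and simultaneously translating positions by $p_i \mapsto p_i + \theta(v_i)$, leaves $\R$ unchanged entry by entry, hence preserves $\rank \R_Y$ and preserves genericity of $p$. Choosing a spanning forest of the subgraph $(V(Y), Y)$ and switching so that every forest edge receives gain $0$, each remaining edge of $Y$ acquires as its gain the net gain of its fundamental cycle; thus after switching every edge of $Y$ has gain lying in $W$, where $W \subseteq \mathbb R^d$ is the real span of $\gs(Y)$, a subspace of dimension $k$.

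Now I would exhibit the motions. Let $t \in \mathbb R^d$ be constant and let $S$ be a skew-symmetric $d \times d$ matrix with $S(\mathbb R^d) \subseteq W^\perp$, equivalently $SW = 0$; such matrices form a space of dimension $\binom{d-k}{2}$ (the skew-symmetric operators on $W^\perp$, which has dimension $d-k$). Set $u_i = S p_i + t$ for $v_i \in V(Y)$. For an edge $e = \{v_i, v_j; m_e\} \in Y$ we have $u_i - u_j = S(p_i - p_j)$, and skew-symmetry of $S$ gives $S(p_i - p_j)\cdot(p_i - p_j) = 0$ while $S(p_i - p_j) \in W^\perp$ and $m_e \in W$ give $S(p_i - p_j)\cdot m_e = 0$; hence
\[(u_i - u_j)\cdot(p_i - p_j - m_e) = 0,\]
so $u \in \ker \R_Y$. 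For generic $p$ the differences $\{p_i - p_j : v_i, v_j \in V(Y)\}$ span a subspace large enough that the map $(S, t) \mapsto (u_i)_{v_i \in V(Y)}$ is injective, so the $d$ translations and the $\binom{d-k}{2}$ rotations together span a subspace of $\ker \R_Y$ of dimension $d + \binom{d-k}{2}$. Combining this with the first paragraph gives $|Y| = \rank \R_Y \le d|V(Y)| - d - \binom{d-k}{2} = d|V(Y)| - \binom{d+1}{2} + \sum_{i=1}^{|\gs(Y)|}(d-i)$, which is exactly (\ref{eq:dNecessary}).

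I expect the main difficulty to lie not in the idea but in the bookkeeping around the gauge reduction and the genericity: one must treat $Y$ whose induced subgraph is disconnected (switching each component along its own spanning tree), verify carefully that the translation accompanying a switching genuinely fixes $\R_Y$, and pin down precisely the genericity hypothesis on $p$ under which the exhibited $d + \binom{d-k}{2}$ motions are linearly independent — this is the one delicate point, and it degrades (harmlessly, since $\ker \R_Y$ only grows) when $V(Y)$ is very small. It is worth recording the extreme cases as a sanity check: $k = 0$ forces $W = \{0\}$, so $\ker \R_Y$ contains the full $\binom{d+1}{2}$-dimensional space of rigid-motion velocities, exactly as for a finite framework, whereas $k \ge d-1$ leaves only the $d$ translations, reflecting that a framework on $\Tor^d$ has no rotational trivial motions.
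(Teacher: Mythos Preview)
The paper does not actually prove this theorem; it cites \cite{ThesisPaper1} and only remarks on the extreme values of $|\gs(Y)|$.  Your approach---switch so that every edge of $Y$ carries a gain in the real span $W$ of $\gs(Y)$, then exhibit in $\ker \R_Y$ the $d$ infinitesimal translations together with the $\binom{d-k}{2}$-dimensional family $u_i = Sp_i$ for skew-symmetric $S$ with $SW = 0$---is the standard one and is correct in its essentials; the identity $\binom{d+1}{2} - \sum_{i=1}^{k}(d-i) = d + \binom{d-k}{2}$ is exactly the right bookkeeping, and the verification that switching leaves $\R_Y$ literally unchanged is accurate.

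There is one genuine slip.  Your parenthetical that the small-$|V(Y)|$ case is ``harmless, since $\ker \R_Y$ only grows'' has the direction wrong.  When the affine span of $\{p_i : v_i \in V(Y)\}$ has dimension below $d-1$, the map $(S,t)\mapsto (Sp_i+t)_i$ acquires a nontrivial kernel (rotations fixing that span act trivially), so its image in $\ker \R_Y$ has dimension \emph{strictly less} than $d+\binom{d-k}{2}$, and $\ker\R_Y$ need not be any larger.  Indeed (\ref{eq:dNecessary}) as literally stated is false in this regime: a single edge in $d=3$ has $|Y|=1$, $|V(Y)|=2$, $k=0$, yet $d|V(Y)|-\binom{d+1}{2}=0$.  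This is the familiar imprecision in Maxwell--Laman counts (the finite condition $|E'|\le 3|V'|-6$ already fails for $|V'|=2$), so the statement should be read with the customary caveat on $|V(Y)|$; once that is in place, your argument goes through as written.
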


After converting our body-bar orbit graph $\bbog$ to a bar-joint orbit graph $\pogind$, we see that the necessary conditions for bar-joint and body-bar orbit graphs on $\Tor^d$ are equivalent. We remark that the first two terms in (\ref{eq:dNecessary}) are simply the necessary conditions for the independence of a finite periodic bar-joint framework in $d$-dimensions: $|Y| \leq d|V(Y)| - {d+1 \choose 2}$. When $|\gs(Y)| \geq d-1$, (\ref{eq:dNecessary}) becomes $|Y| \leq d|V(Y)| - d$.  See \cite{ThesisPaper1} for a detailed proof.

%SUFFICIENT CONDITIONS
\section{Sufficient conditions for generic rigidity on $\Tor^3$}
\label{sec:sufficient}

The proof of the sufficiency of Theorem \ref{thm:main} is more challenging. We use the following approach. Section \ref{sec:FS} describes a result of Fekete and Szeg\H{o} which provides an inductive characterization of the combinatorial class of graphs in which we are interested. Section \ref{sec:modEdgePinches} shows that modifying these inductive moves (edge pinches) to incorporate gains preserves the generic rigidity of the corresponding body-bar orbit frameworks on $\Tor^3$. Finally, section \ref{sec:combinatorialStep} provides the main combinatorial step, in proving that all gain graphs which satisfy the sparsity condition (\ref{eq:gainSparse}) can be generated through a sequence of gain-modified edge pinches from an appropriate starting graph, which is a direct modification of the techniques of Fekete and Szeg\H{o}. 

%Fekete Szego result
\subsection{Inductive constructions for $[6,3]$-graphs}
\label{sec:FS}

Let $H$ be a multigraph. For a subset of vertices $X \subset V(H)$, let $i_H(X)$ denote the number of induced edges on the vertex set $X$. We say that $H$ is $[k,\ell]$-sparse if $|i_H(X)| \leq k|X| - \ell$ holds for every $X \subset V(H)$. $H$ will be called a $[k, \ell]$-graph if, in addition to being $[k, \ell]$-sparse, $H$ also satisfies $|E(H)| = k|V(H)| - \ell$. In some terminology $[k, \ell]$-graphs are known as $[k, \ell]$-tight graphs, see \cite{LeeStreinu} for example. 

We summarize a result of Fekete and Szeg\H{o} \cite{fekete}, first by introducing inductive graph constructions called {\it edge pinches}. Let $0 \leq j \leq n \leq k$. An {\it edge pinch}, denoted $K(k,n,j)$, consists of the following: choose $j$ edges of $H$, and pinch them into a new vertex, $z$. Link $z$ with other vertices of $H$ with $k-n$ new edges, and place $n-j$ loops on the vertex $z$ (see Figure \ref{fig:edgePinch}). Following the edge pinch, the new graph has $k$ more edges than the original, and the new vertex has degree $k+n$. 

\begin{figure}[htbp]
\begin{center}
\includegraphics[width=4in]{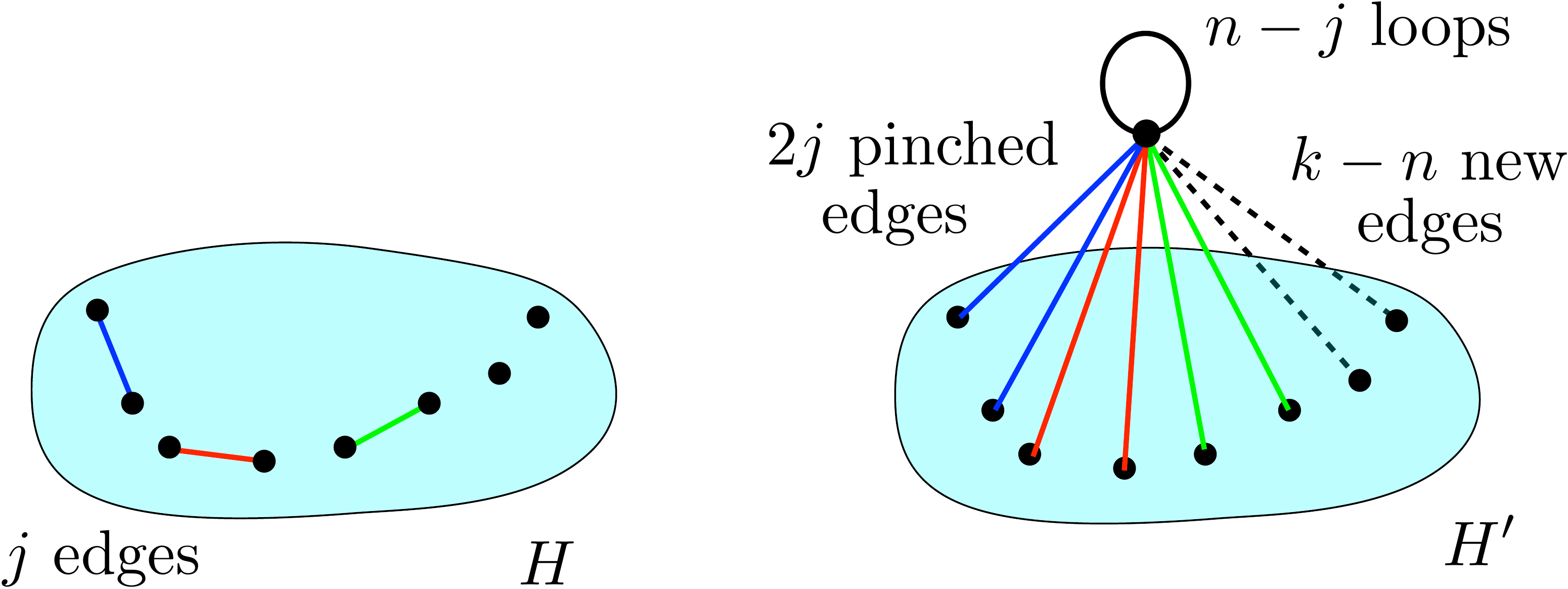}
\caption{An edge pinch $K(k, n, j)$ where $k=7$, $n=5$, $j=3$. The new vertex has degree $12$ (the loop contributes $2$ to the degree).}
\label{fig:edgePinch}
\end{center}
\end{figure}

The single vertex graph with $x$ loops will be denoted $P_x$. 

The main result of \cite{fekete} is the following: 
\begin{thm}[\cite{fekete}]
Let $H$ be a multigraph and let $1 \leq \ell \leq k$. Then $H$ is a $[k, \ell]$-graph if and only if $H$ can be created from $P_{k-\ell}$ with edge pinches $K(k, n, j)$, where $j \leq n \leq k-1$, and $n-j \leq k-\ell$. 
\label{thm:fekete}
\end{thm}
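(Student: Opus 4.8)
To prove Theorem~\ref{thm:fekete}, the plan is to induct on $|V(H)|$. The base case is the one-vertex graph: a $[k,\ell]$-graph on a single vertex has exactly $k-\ell$ loops and nothing else, so it equals $P_{k-\ell}$; conversely $P_{k-\ell}$ is a $[k,\ell]$-graph. Thus it suffices to establish two things: (i) an edge pinch $K(k,n,j)$ with $j\le n\le k-1$ and $n-j\le k-\ell$ sends a $[k,\ell]$-graph to a $[k,\ell]$-graph, and (ii) every $[k,\ell]$-graph on at least two vertices is obtained from a smaller $[k,\ell]$-graph by such a pinch.

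For (i): a pinch $K(k,n,j)$ adds one vertex and, on balance, exactly $k$ edges (the $j$ split edges contribute a net $+j$, plus $k-n$ ordinary edges, plus $n-j$ loops, giving $j+(k-n)+(n-j)=k$), so $|E(H')|=k|V(H')|-\ell$ follows from $|E(H)|=k|V(H)|-\ell$. For sparsity let $H'$ be the result, $z$ the new vertex, and $X\subseteq V(H')$ nonempty. If $z\notin X$ then $i_{H'}(X)\le i_H(X)$, since any edge induced on $X$ in $H'$ was already induced on $X$ in $H$ (a split edge with both ends in $X$ now runs through $z$ and is lost). If $z\in X$ and $X':=X\setminus\{z\}\ne\emptyset$, I would compare with $i_H(X')$: the split edges meeting $X'$ add at most $j$ to the count, the $k-n$ ordinary edges at $z$ add at most $k-n$, and the $n-j$ loops at $z$ add exactly $n-j$, so $i_{H'}(X)\le i_H(X')+\bigl(j+(k-n)+(n-j)\bigr)=i_H(X')+k\le(k|X'|-\ell)+k=k|X|-\ell$. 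The remaining case $X=\{z\}$ gives $i_{H'}(\{z\})=n-j\le k-\ell=k\cdot 1-\ell$, which uses exactly the hypothesis $n-j\le k-\ell$; the hypothesis $n\le k-1$ is what makes the ``$+k$'' bound above valid.

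For (ii): let $H$ be a $[k,\ell]$-graph with $|V(H)|\ge 2$. Since $\sum_u\deg u=2(k|V(H)|-\ell)<2k|V(H)|$, some vertex $z$ has $\deg z\le 2k-1$. Let $s$ and $c$ be the numbers of loops and of non-loop edges at $z$, so $\deg z=2s+c$. Sparsity applied to $V(H)\setminus\{z\}$ reads $|E(H)|-(s+c)\le k(|V(H)|-1)-\ell$, which (since $|E(H)|=k|V(H)|-\ell$) gives $s+c\ge k$; hence $\deg z\ge k$, so $\deg z=k+n$ with $0\le n\le k-1$, and moreover $s\le n$ (because $n=2s+c-k$). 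Sparsity on $\{z\}$ gives $s\le k-\ell$. The move to reverse is then essentially forced: delete $z$, delete its $s$ loops, delete a suitably chosen set of $k-n$ of its non-loop edges, and pair up the remaining $2(n-s)$ non-loop edges at $z$ into $n-s$ new edges (a pair $\{a,z\},\{z,b\}$ becoming the edge between $a$ and $b$, a loop at $a$ if $a=b$). This is precisely the inverse of a pinch $K(k,n,n-s)$, whose side conditions $n-s\le n\le k-1$ and $n-(n-s)=s\le k-\ell$ hold automatically.

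The hard part is showing, in (ii), that the deleted and paired edges can be chosen so that $H'$ is again $[k,\ell]$-sparse. Deletion can never violate sparsity, so the only danger is that merging $\{a,z\},\{z,b\}$ into an edge between $a$ and $b$ over-fills some \emph{tight} set $X\subseteq V(H)\setminus\{z\}$ (one with $i_H(X)=k|X|-\ell$) that contains both $a$ and $b$. I would exploit that $X\mapsto k|X|-i_H(X)$ is submodular, so the tight sets containing a fixed vertex form a lattice under intersection and union; together with $\deg z\le 2k-1$, which caps the number of edge-ends at $z$ that any maximal tight set can ``trap,'' a standard uncrossing/exchange argument produces an admissible pairing (and, when a particular low-degree $z$ admits none, reveals the tight structure that lets one pass to a better vertex). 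Making this uncrossing argument precise, and disposing of the small and degenerate configurations, is the essential work, and is carried out by Fekete and Szeg\H{o}~\cite{fekete}.
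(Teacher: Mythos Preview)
The paper does not give its own proof of this theorem; it is quoted as the main result of Fekete and Szeg\H{o}~\cite{fekete}, and only the gain-graph analogues (Theorems~\ref{thm:22mod} and~\ref{thm:16mod}) are argued in the paper, explicitly ``following verbatim'' the Fekete--Szeg\H{o} proof. Your outline is exactly that structure: induction on $|V(H)|$, the base case $P_{k-\ell}$, the easy forward direction, and the reverse direction via a minimum-degree vertex $z$ with $\deg z=k+n$, $n\le k-1$, followed by an admissible splitting-off of $j=n-s$ pairs (where $s=i_H(z)$). Your identification of the hard step---finding an admissible pairing, handled by the submodularity/uncrossing of tight sets---and your deferral of its details to~\cite{fekete} matches the paper's own treatment.

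Two small corrections. First, your remark that ``the hypothesis $n\le k-1$ is what makes the $+k$ bound above valid'' is not right: the identity $j+(k-n)+(n-j)=k$ holds for all $n$, so your sparsity check in~(i) goes through regardless. The restriction $n\le k-1$ is really a feature of the \emph{reverse} direction (it falls out of $\deg z\le 2k-1$), not something needed to show that pinches preserve $[k,\ell]$-tightness. Second, your parenthetical ``when a particular low-degree $z$ admits none, \dots\ pass to a better vertex'' overstates the difficulty: in the Fekete--Szeg\H{o} argument one does not switch vertices. Once a vertex $s$ of degree at most $2k-1$ is fixed, the splitting-off lemma (the analogue of Theorem~\ref{thm:22mod}) guarantees $j$ admissible split-offs at $s$; the contradiction is obtained by taking a \emph{maximal} sequence of admissible split-offs and a \emph{maximal} blocking tight set, then exhibiting a previously split edge outside it. No change of base vertex is required.
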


In Section \ref{sec:combinatorialStep}, we will state and prove a modified version of this result for $[k, \ell] = [6,3]$ that follows the proof of Fekete and Szeg\H{o}. We first describe gain-modified versions of edge pinches, and prove that they preserve the generic rigidity of body-bar periodic orbit frameworks on $\Tor^3$. 

We remark that Lee and Streinu independently characterized the same range of $[k,\ell]$-graphs, and also have a Henneberg-type result \cite{LeeStreinu}. We do not use their methods here. 

%Modified edge pinches preserve generic rigidity (Inductive step)
\subsection{Modified edge pinches preserve generic rigidity of body-bar periodic orbit frameworks in 3 dimensions}
\label{sec:modEdgePinches}

We now describe gain-modified versions of edge pinches in three-dimensions, which are edge pinches on body-bar orbit graphs $\bbog$ ($m:E^+ \rightarrow \mathbb Z^3$). A {\it gain-modified edge pinch} $K^*(6,n,j)$ on $\bbog$ is an edge pinch $K(6,n,j)$ on $H$ to form the graph $H^*$, together with an extension $m^*$ of the gain assignment $m$ to the new edges, such that 
\begin{enumerate}[(i)]
	\item any pinched edge $e = \{B_{\alpha}, B_{\beta}; m_e\}$ becomes the two edges $\{B_{\alpha}, B_0; m_{e_1}\}$ and $\{B_0, B_{\beta}; m_{e_2}\}$, where $m_{e_1} + m_{e_2} = m_e$;
	\item any subset of the new edges (including the pinched edges) satisfies (\ref{eq:gainSparse}). 
\end{enumerate}
Condition (ii) ensures that, for example, loops on the new vertex $z$ have gain assignments which satisfy the necessary conditions of Theorem \ref{thm:main}. 

\begin{thm}
Let $\bbog$ be a periodic body-bar orbit graph, and suppose $\bbogp$ is the orbit graph resulting from performing the edge pinch $K^*(6,n,j)$ on $\bbog$. If $\bbog$ is generically rigid on $\Tor^3$, then so is $\bbogp$. 
\label{thm:bbPinches}
\end{thm}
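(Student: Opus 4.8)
The plan is to prove Theorem \ref{thm:bbPinches} by passing to induced bar-joint periodic orbit frameworks and realizing the gain-modified edge pinch $K^*(6,n,j)$ as a finite sequence of the rigidity-preserving moves of Section \ref{sec:barJointInductions}: periodic vertex additions, periodic edge splits, and triangle exchanges (Proposition \ref{prop:triangleReplacement}). By definition $\bbog$ is generically rigid on $\Tor^3$ precisely when its induced bar-joint orbit framework is, and likewise for $\bbogp$, so it suffices to move between $\langle G_H, m_H\rangle$ and its counterpart for $H^*$. Since we are free to choose the isostatic framework replacing each body, I would replace the new body $B_0$ by a generic isostatic bar-joint framework $F_0$ on $t$ vertices, with $t$ large enough that the six edges of $H^*$ incident to $B_0$, and the $n-j$ loops at $B_0$ (which become bars joining two vertices of $F_0$), can all be attached at distinct vertices of $F_0$, so that the induced graph stays simple. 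With this choice, the induced framework of $\bbogp$ is obtained from that of $\bbog$ by deleting the $j$ bars that are the pinched edges, adding the $t$ vertices and $3t-6$ gain-zero internal edges of $F_0$, and adding six further bars incident to $F_0$ — the $j$ re-routed pinched bars with gains summing as in condition (i), the $6-n$ new bars, and the $n-j$ loop-bars; the net effect is exactly $t$ new vertices and $3t$ new edges.

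I would then build the induced framework of $\bbogp$ from that of $\bbog$ one vertex of $F_0$ at a time. Each pinched edge is handled by a periodic edge split of the corresponding bar, placing the new vertex of $F_0$ on the line of that bar; a triangle exchange (Proposition \ref{prop:triangleReplacement}) then frees us to re-route the edge as in condition (i), the precise distribution of the gain $m_e$ between the two halves being immaterial since it can be altered by the standard switching equivalence of gain graphs, which does not affect generic rigidity. The two extra edges of each split, and the three edges of each subsequent periodic vertex addition used to introduce the remaining vertices of $F_0$, are drawn from the still-unused internal edges of $F_0$ (gain zero), the $6-n$ new bars, and the $n-j$ loop-bars (each carrying its prescribed nonzero gain). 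The arithmetic above leaves exactly three edges available at each step, and every move preserves generic rigidity on $\Tor^3$ by the propositions of Section \ref{sec:barJointInductions}, so the resulting framework is generically rigid.

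The substantive point is to order these moves so that the gain hypotheses of periodic vertex addition and edge split hold at every step: no two of the three new edges between a single pair of vertices may share a gain, and if all three join one vertex then their gain space must have dimension at least $2$. This is exactly where hypothesis (ii) of the gain-modified pinch — that every subset of the new edges satisfies $(\ast)$ — enters, since parallel loop-bars, or several edges between one pair of vertices with low-dimensional gain space, would violate $(\ast)$. In particular $(\ast)$ restricted to the loop set forces at most three loops at $B_0$, with a two-element subset spanning a gain space of dimension $\geq 1$ and a three-element subset of dimension $\geq 2$. A short preliminary lemma — add the loop-bars of $F_0$ one at a time and check, via $(\ast)$ on subsets of loops, that each raises the rank — shows that $F_0$ together with its loops behaves like a single rigid body on $\Tor^3$; this is precisely the single-vertex base case of Theorem \ref{thm:main}.

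I expect the ordering/bookkeeping step to be the main obstacle: one must show that the $t$ vertices of $F_0$ and the $3t$ new edges can always be dealt into a legal sequence of vertex additions and edge splits without getting stranded — for instance, needing to place a vertex all of whose three edges point to one already-placed vertex whose gain space has dimension $\leq 1$, or to add a loop-bar whose gain already lies in the span of the loops placed so far. Controlling this requires the full strength of hypothesis (ii), together with the freedom to enlarge $F_0$ where more room is needed. By contrast, the geometric content — that an edge split or a triangle exchange through a point on the line of an existing bar preserves generic rigidity on $\Tor^3$ — is already supplied by Section \ref{sec:barJointInductions}, so it is not where the difficulty lies.
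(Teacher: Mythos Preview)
Your overall strategy---pass to the induced bar-joint orbit framework and realize $K^*(6,n,j)$ as a sequence of periodic vertex additions, edge splits, and triangle exchanges---matches the paper. But the paper's proof is organized differently, and that reorganization is precisely what dissolves the ``ordering/bookkeeping'' obstacle you flag as unresolved.

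The paper splits the argument into two steps. In Step~1 ($j=0$, body addition) the new body is always realized by a \emph{triangle}: three vertices and three gain-zero internal edges. The four cases $n=0,1,2,3$ are handled explicitly by the fixed pattern (vertex addition, edge split, vertex addition), and the only nontrivial checks are that the relevant gains are nonzero or distinct---which follows from~$(\ast)$ applied to the loop set, exactly as you anticipated. In Step~2 ($1\le j\le 5$) the paper does \emph{not} perform edge splits on the $j$ pinched bars directly. Instead it first performs the body addition of Step~1, but chooses the geometric position of the new triangle so that $j$ of its attaching bars lie along the lines of the bars to be pinched (picking $j$ generic points on those lines and building the body on them). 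Then $j$ applications of the triangle exchange (Proposition~\ref{prop:triangleReplacement}) swap each collinear pair to produce the re-routed edges.

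The effect of this decomposition is that the gain hypotheses of the bar-joint moves only need to be verified in the four concrete $j=0$ cases; the $j\ge 1$ case inherits its validity from $j=0$ plus a purely geometric collinearity argument. Your plan, by contrast, interleaves the edge splits on the pinched bars with the construction of $F_0$, and this is what generates the scheduling problem you rightly identify but do not solve. So the gap in your proposal is real, and the missing idea is: do the body addition first in its entirety (with a minimal triangle body, case-by-case on $n$), and only then handle the pinched edges via triangle exchange rather than via further edge splits.
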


\begin{proof}
The proof follows the following two steps. It is related to the arguments appearing in the paper of Connelly, Jord\'{a}n and Whiteley \cite{genericGlobal}.

1. $j = 0$

2. $1 \leq j \leq 5$\\

{\bf Step 1. $j=0$ (body addition)} Note that when $j=0$, we are not splitting any edges. Instead, we are simply adding a new vertex to $H$ (a body), which has between 0 and 3 loops. For bar-joint frameworks, this type of move is usually called a {\it vertex addition} (see for example, \cite{GeneratingIsostaticFrameworks}), but for our purposes we will think of it as a {\it body addition}, since each vertex in $H$ represents a body in a periodic orbit framework $\bbofw$ on $\Tor^3$. 

Let $\bbog$ be the body-bar orbit graph, and suppose we are performing the edge pinch $K^*(6,n,0)$, where $0 \leq n \leq 3$.
We first convert $\bbog$ to the induced bar-joint framework $\pogind$. We now have four cases, for the four possible values of $n$, see Table \ref{tab:bodyAddition}. We perform rigidity-preserving moves on the bar-joint frameworks, as described in Section \ref{sec:barJointInductions}. The first column of Table \ref{tab:bodyAddition} is the ``target", meaning that it is this body-bar framework $\langle H', m' \rangle$ that we wish to create. The three other columns depict moves on the underlying bar-joint framework $\langle G_H, m_H \rangle$.

All four cases follow the same formula, (vertex addition, edge split, vertex addition) as illustrated in Table \ref{tab:bodyAddition}. In the case that $m=1$ or $2$, we can perform the pictured edge split, since we know that $m_6 \neq (0,0,0)$ (as the gain on the loop in our target). In the case that $m=3$, and we are performing the pinch $K^*(6,3,0)$, we note that among the three loops with gains $m_4, m_5, m_6$, we must have at least two distinct gains. That is, we may assume without loss of generality that $m_4 \neq m_5 \neq (0,0,0)$. This makes the edge split we perform a valid one. Let the bar-joint periodic orbit graph resulting from this sequence of moves be denoted $\langle G_H', m_H' \rangle$. 

The final step in all cases is to recognize the triangle with all edges having zero gains as an isostatic graph in $\mathbb R^3$ spanning a two-dimensional affine subspace, which permits us to treat this triangle as a body, and convert $\langle G_H', m_H' \rangle$ back to a body-bar framework $\langle H', m' \rangle$. That is, we add a new vertex to the the vertices of the graph $H$ to form $V(H')$, and any edges with non-zero gains that are adjacent only to this triangle become loops on the new body. Since we preserved the generic rigidity of the induced bar-joint framework at every stage, the resulting body-bar orbit graph $\langle H', m' \rangle$ is generically rigid on $\Tor^3$ too. \\

\begin{table}[htdp]
\caption{Body addition (edge pinches on zero edges). Unlabeled, undirected edges have gain $(0,0,0)$.}
\begin{center}
\begin{tabular}{|cc||c|c|c|}
\hline
\multicolumn{2}{|c||}{Body-bar orbit graph $\bbog$} & \multicolumn{3}{c|}{Bar-joint orbit graphs}\\
\hline
$K^*(6,0,0)$ & \includegraphics[width=1in]{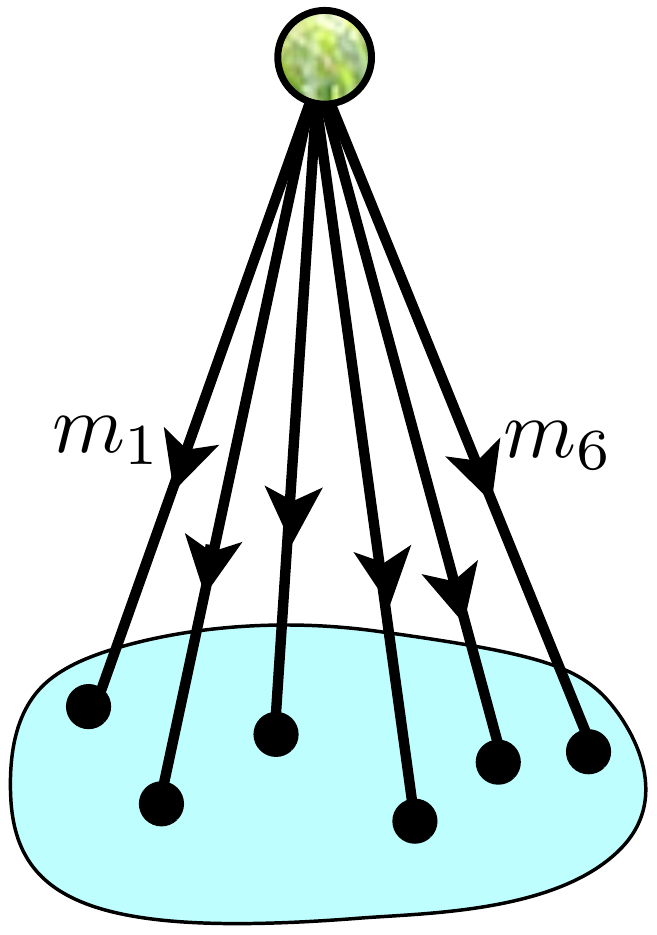} & \includegraphics[width=1in]{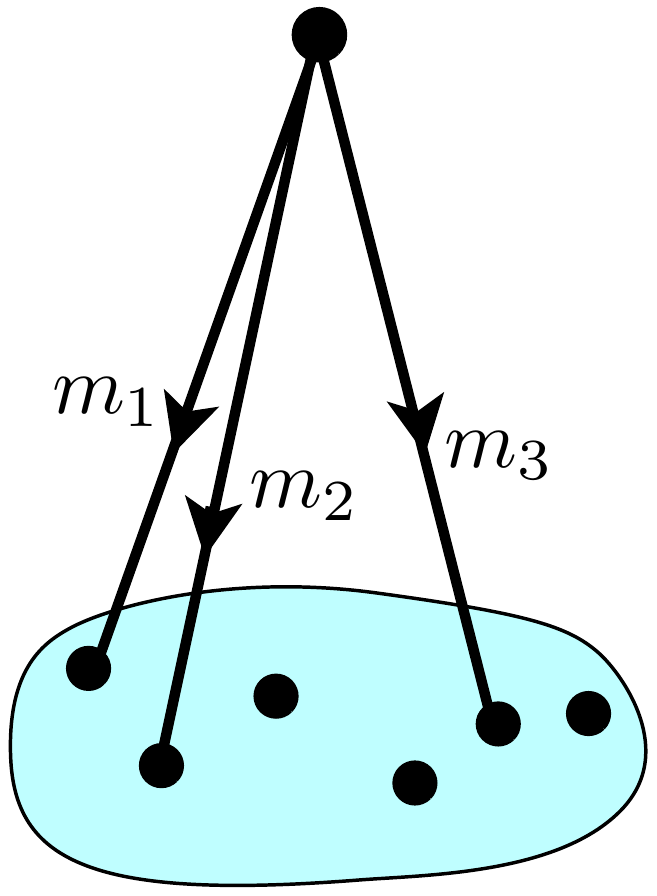} & \includegraphics[width=1in]{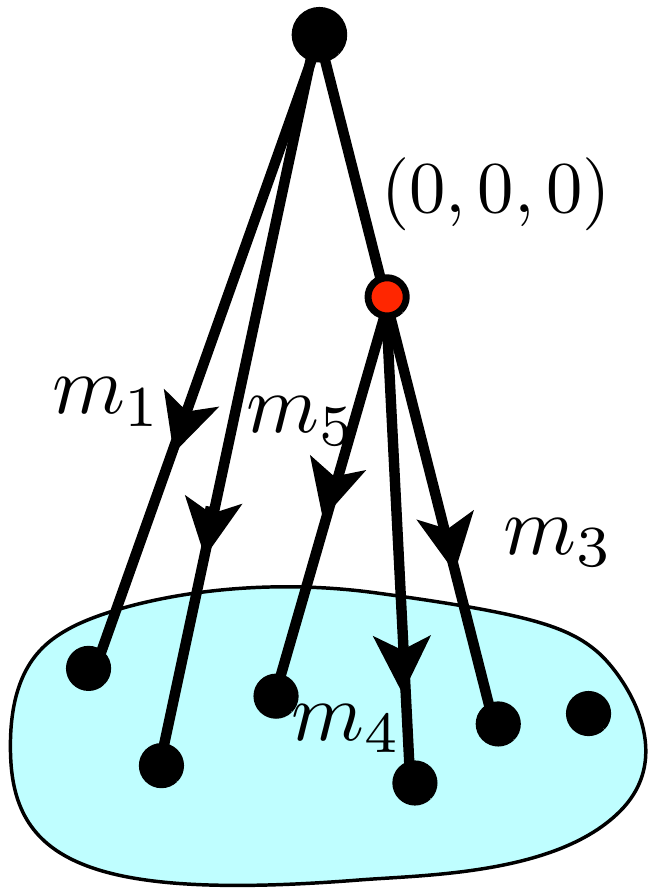} & \includegraphics[width=1in]{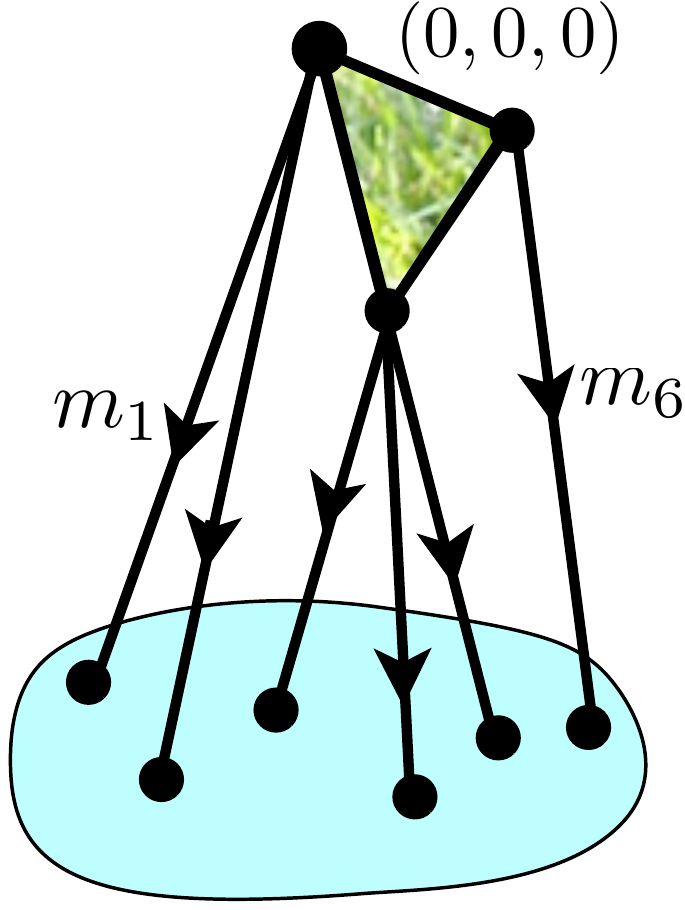} \\ 
& target & vertex addition & edge split & vertex addition \\
\hline
$K^*(6,1,0)$ & \includegraphics[width=1in]{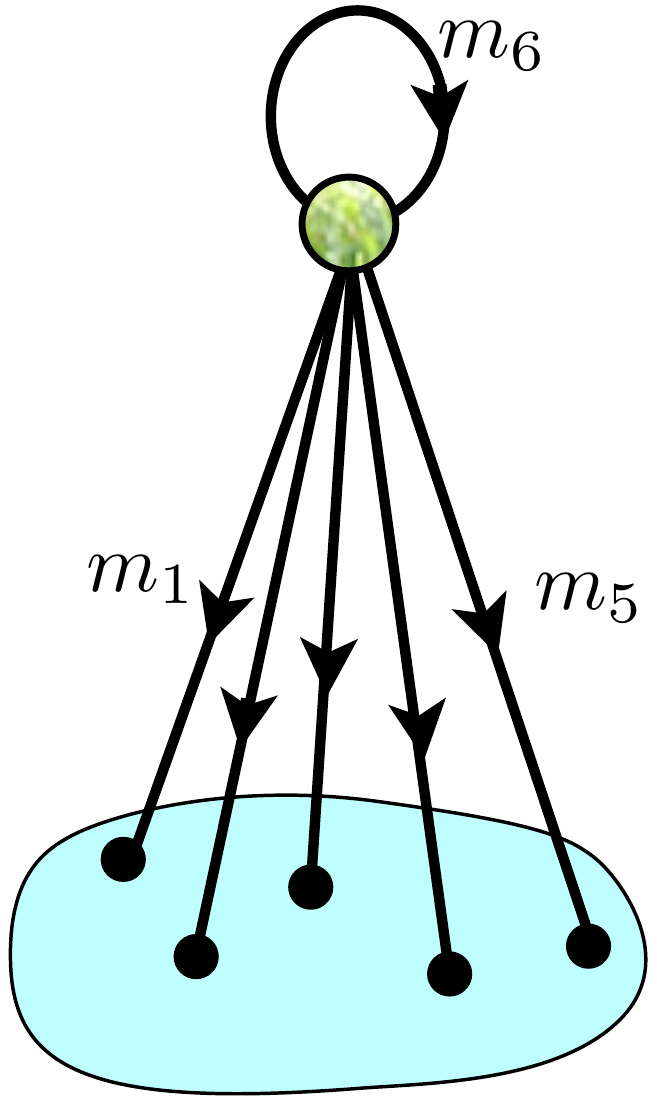} & \includegraphics[width=1in]{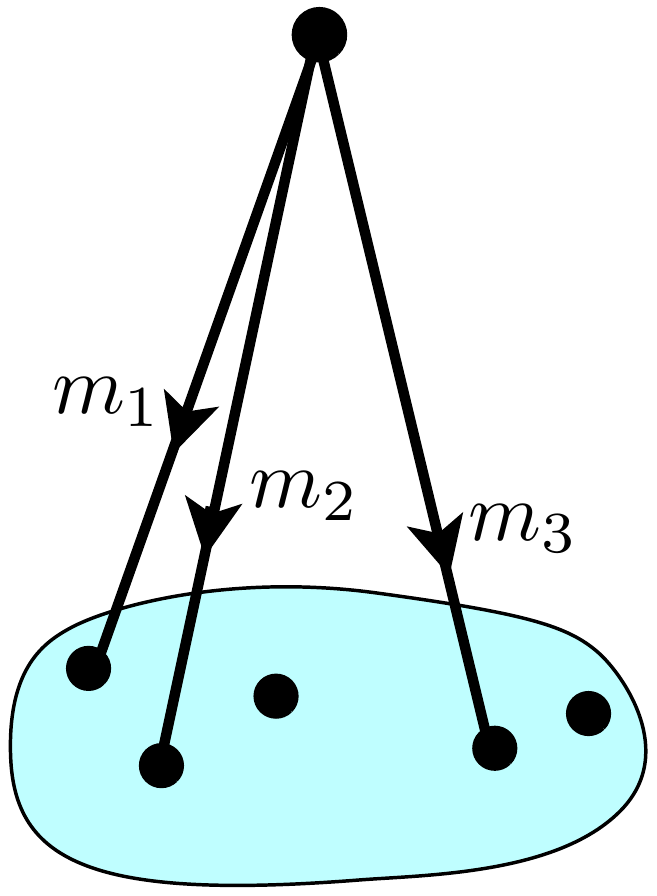} & \includegraphics[width=1in]{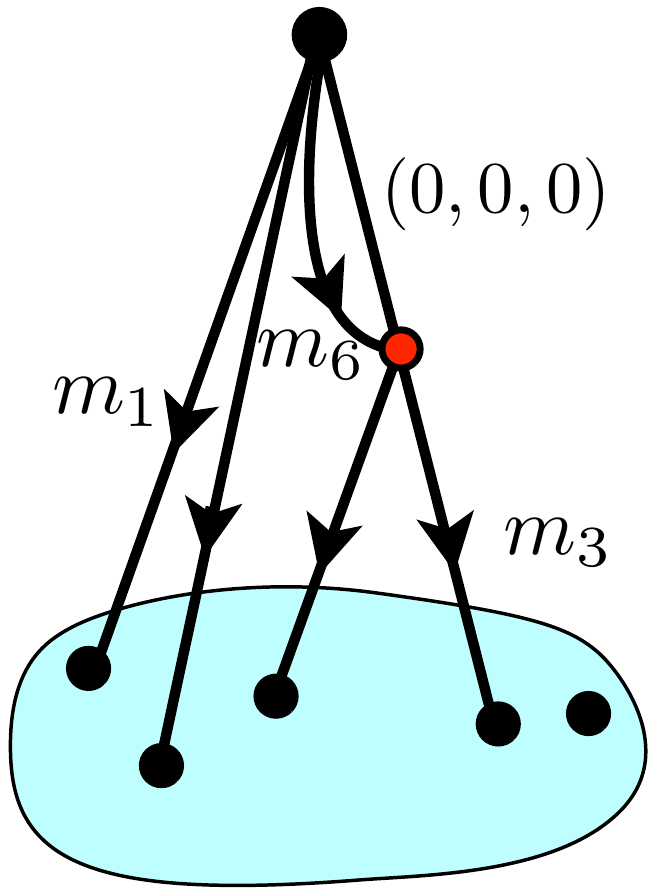} & \includegraphics[width=1in]{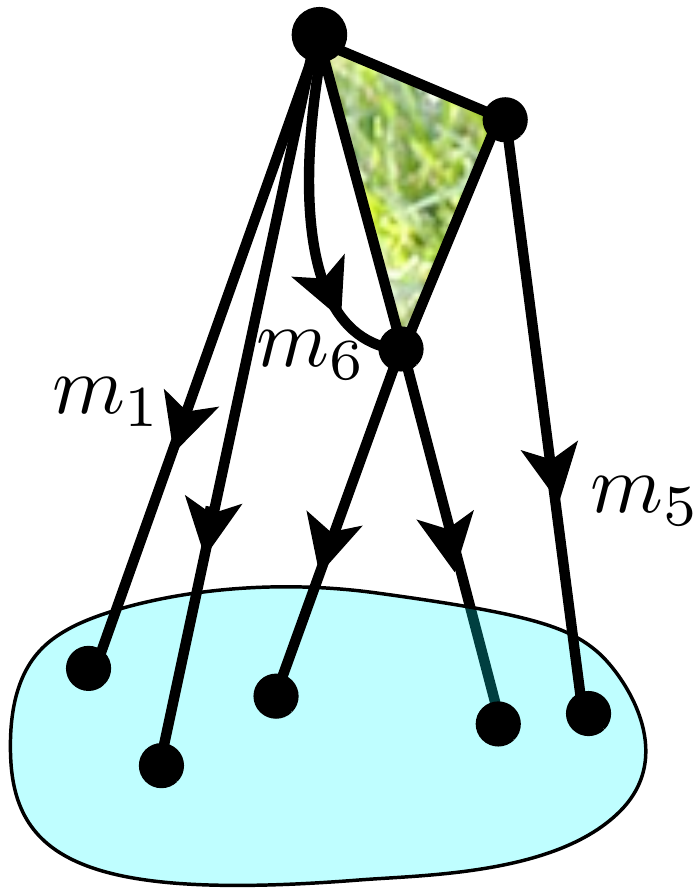} \\ 
& target & vertex addition & edge split, $m_6 \neq 0$ & vertex addition \\
\hline
$K^*(6,2,0)$ & \includegraphics[width=1in]{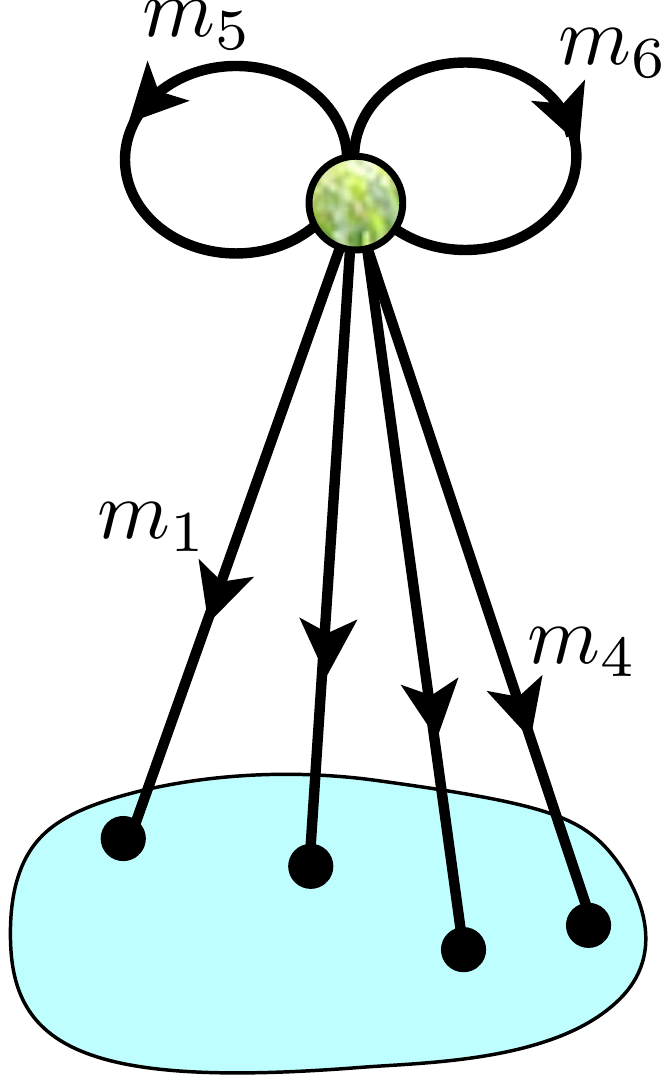} & \includegraphics[width=1in]{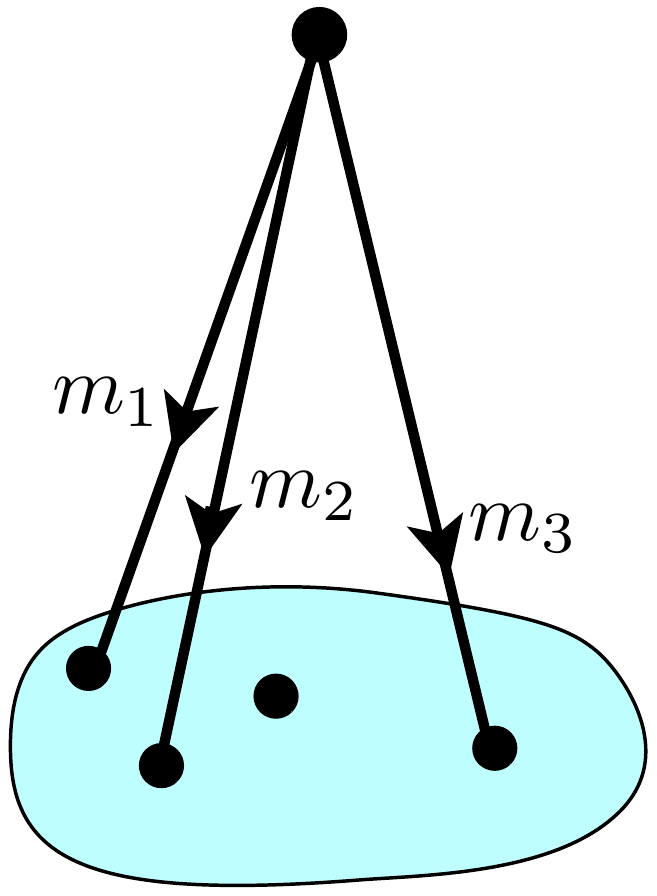} & \includegraphics[width=1in]{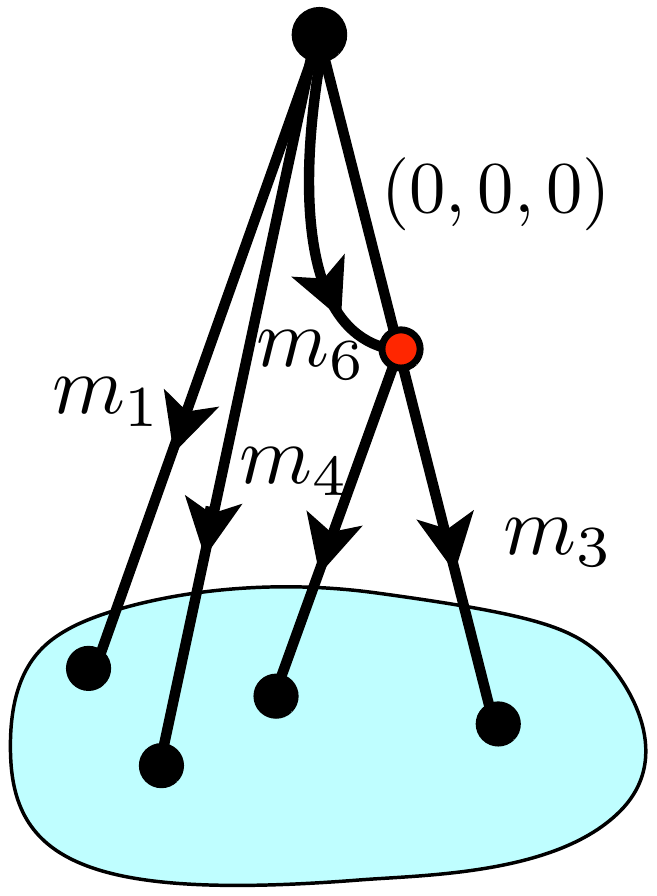} & \includegraphics[width=1in]{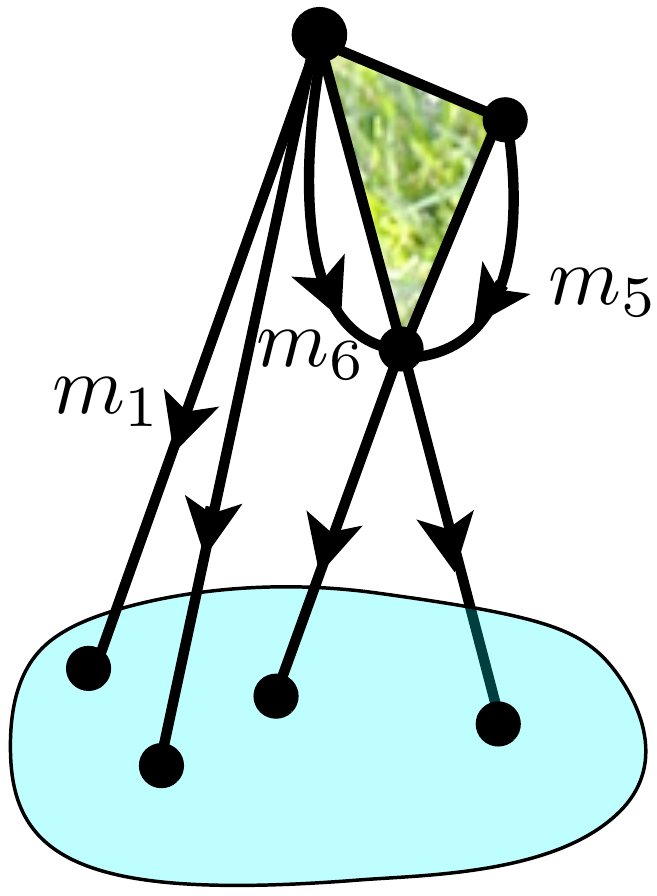} \\ 
& target & vertex addition & edge split, $m_6 \neq 0$ & vertex addition \\
\hline
$K^*(6,3,0)$ & \includegraphics[width=1in]{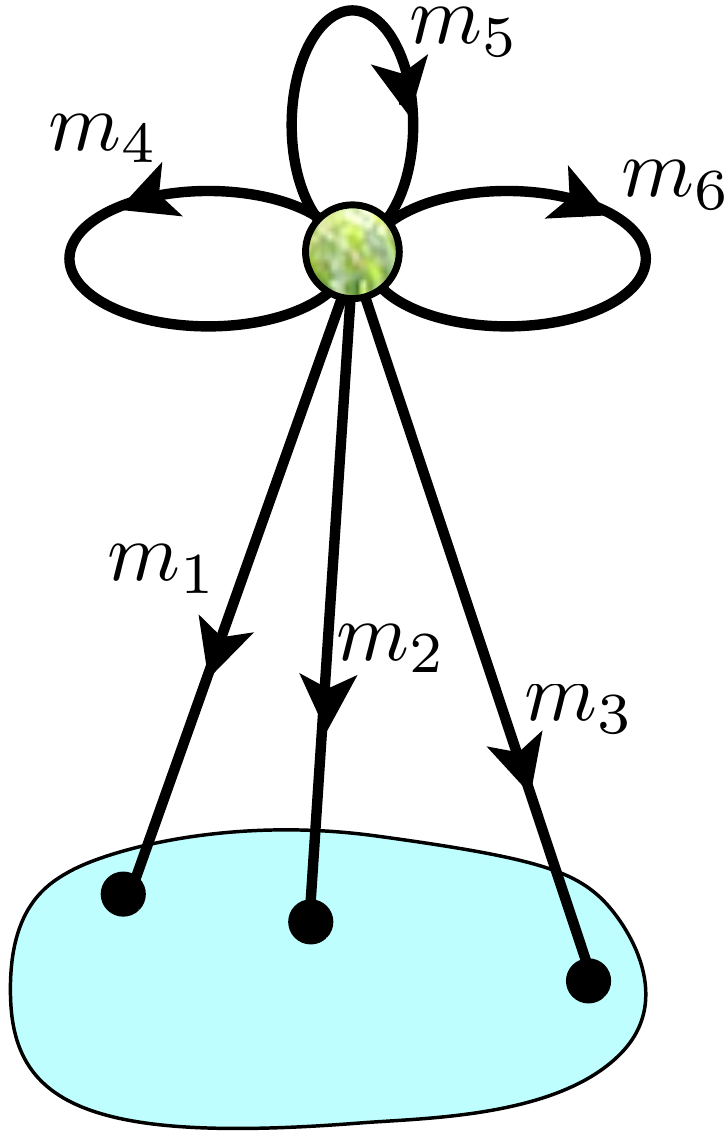} & \includegraphics[width=1in]{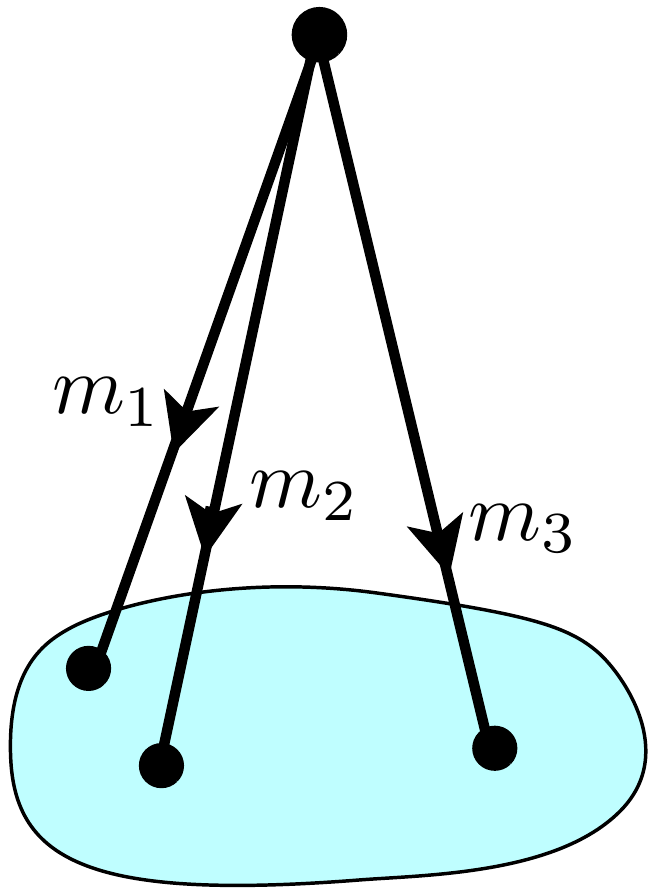} & \includegraphics[width=1in]{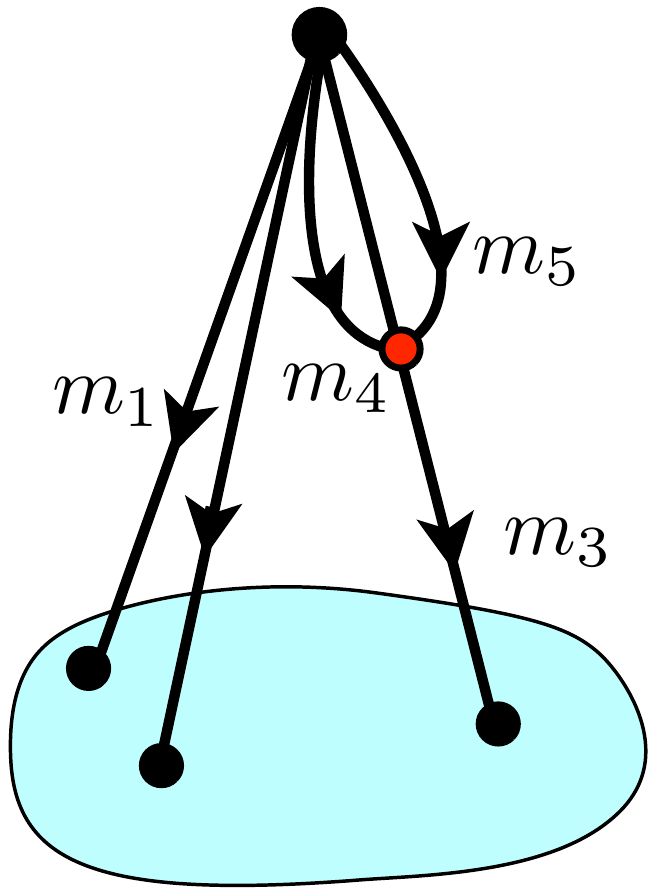} & \includegraphics[width=1in]{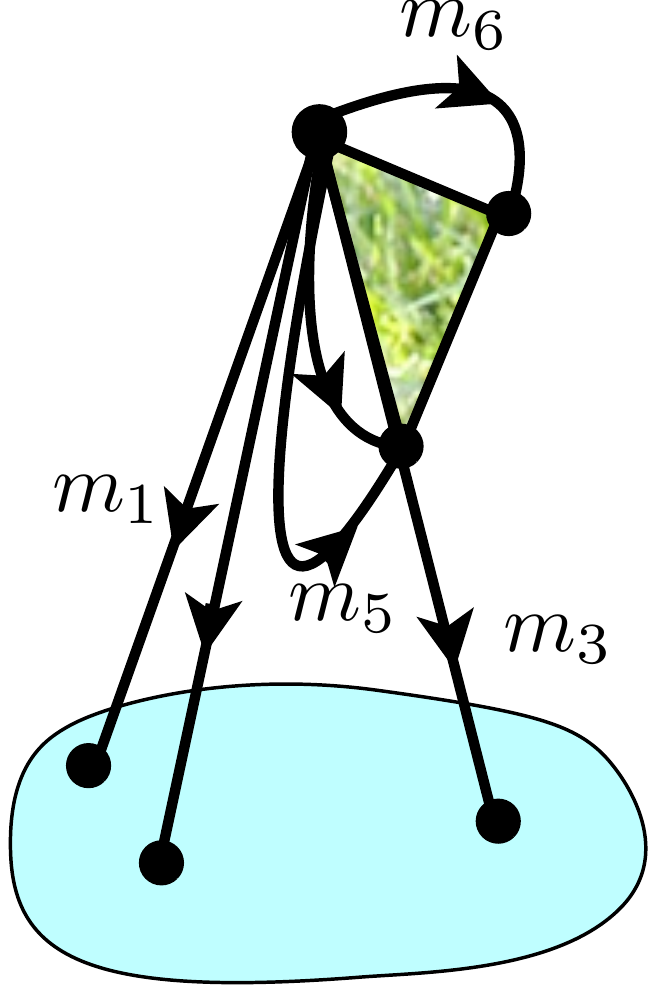} \\
& target & vertex addition & edge split,  & vertex addition \\
&  &  & $m_4 \neq m_5 \neq 0$ &  \\

\hline
\end{tabular}
\end{center}
\label{tab:bodyAddition}
\end{table}%

{\bf Step 2. $1 \leq j \leq 5$ (all other edge pinches)} 

Suppose we are splitting the edges $\{B_{\alpha_1}, B_{\beta_1}; m_1\}, \dots, \{B_{\alpha_j}, B_{\beta_j}; m_j\}$, $1 \leq j \leq 5$, and performing the edge split $K^*(6, n, j)$. We will think of this operation geometrically in terms of the derived periodic framework $(H^m, q^m)$. That is, we are splitting the following orbits of edges in $H^m$:
\[\{(B_{\alpha_1}, y), (B_{\beta_1}, z+m_1)\}, \dots, \{(B_{\alpha_j}, y), (B_{\beta_j}, z+m_j)\}, \ y, z \in \mathbb Z^3.\]

Performing a body addition (as in Step 1) to $\bbofw$, corresponds to adding a new orbit of bodies $(B_{\alpha_0}, z), z \in \mathbb Z^3$ to $(H^m, q^m)$. We claim that we can perform a body addition so that:
\begin{enumerate}
	\item the new body lies within a translate of the unit cell, and 
	\item $j$ of the edges adjacent to the new body lie along the lines of the edges we wish to split. 
\end{enumerate}
To see this, select $j$ generic points, one from along the line of each edge to be split. Because the set of generic points is dense, we may ensure that these $j$ points lie within one cell. Since the $j$ points are generic, we may define an isostatic bar-joint framework on these vertices (possibly adding other vertices if $j = 1, 2$), which corresponds to a body in $\bbofw$. This body lies within the unit cell, since all edges have gain $(0,0,0)$.

Let this body addition add $6-n$ additional edges that are not among those to be split, and $n-j$ loops, which are labeled as desired (see Figure \ref{fig:edgePinches}).  The $j$ edges which lie along the lines of the edges to be split need not have the same gains as the edges to be split, as illustrated. In fact, if we wish to create a graph with edges 
\[\{B_1, B_0; m_{a}\}, \{B_0, B_2; m_{b}\}\] we may always obtain this as an edge pinch of a graph with the edge $\{B_1, B_2; m_{a}+m_{b}\}$.

We complete the edge pinch by noting that the pair of edges in $\bbog$
\[\{B_{\alpha_1}, B_{\beta_1}; m_1\}, \{B_{\alpha_1}, B_0; m_{1,a}\}\]
is equivalent to the pair 
\[\{B_{0}, B_{\beta_1}; m_1-m_{1,a}\}, \{B_{\alpha_1}, B_0; m_{1,a}\},\]
by Proposition \ref{prop:triangleReplacement} applied to the corresponding orbits of edges in $(H^m, q^m)$.
Replacing every such pair in $\bbog$ (there are $j$ of them), we obtain the desired edge pinch, and the claim holds. 

\begin{figure}[htbp]
\begin{center}
\includegraphics[width=4in]{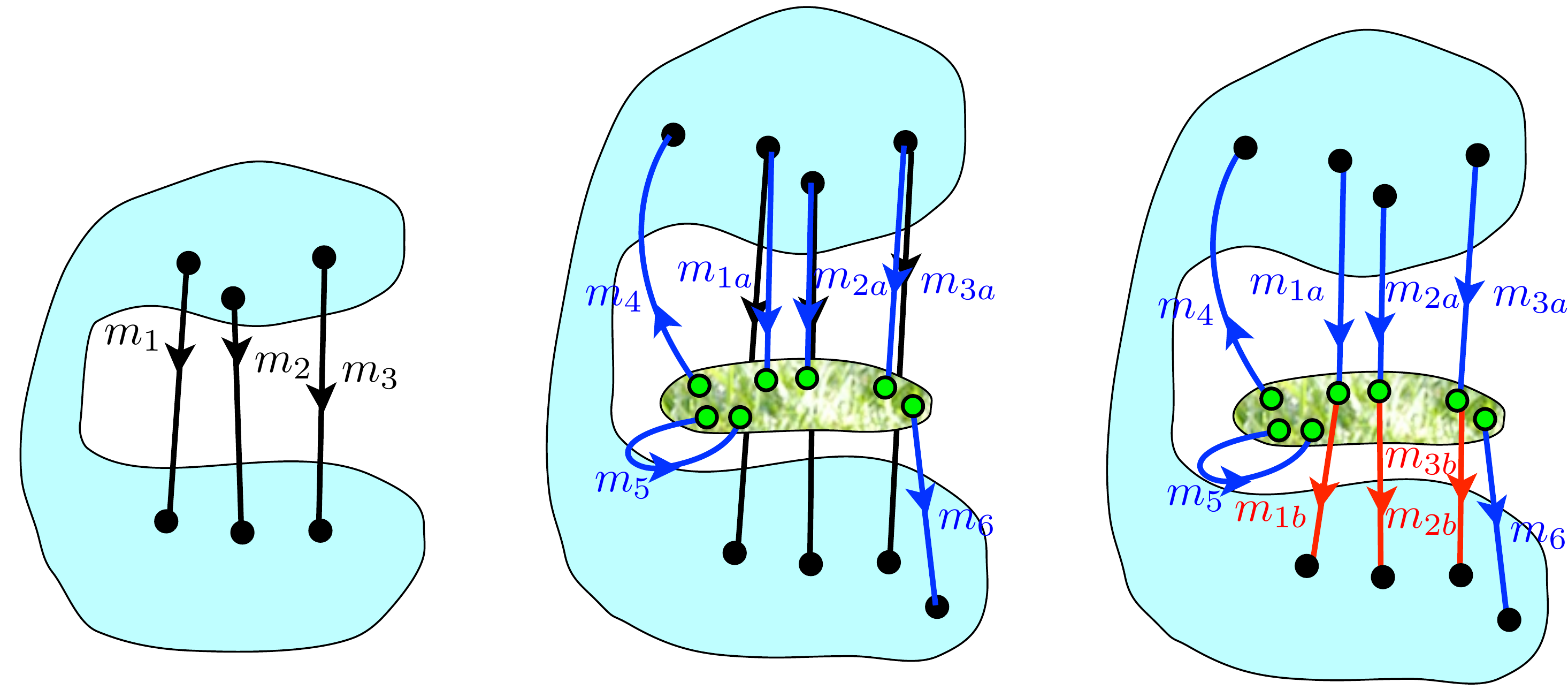}
\caption{Edge pinches $K^*(k,n,j)$, $j>0$.}
\label{fig:edgePinches}
\end{center}
\end{figure}

\end{proof}

%Combinatorial step
\subsection{Modified edge pinches generate the correct class of graphs}
\label{sec:combinatorialStep}

The main result of this section is as follows: 
\begin{thm}
Let $\bbog$ be a $[6,3]$-graph with the property that all non-empty subsets $Y \subset E$ of edges satisfy
		\[|Y| \leq 6|V(Y)| - 6 + \sum_{i=1}^{|\gs(Y)|}(3-i).\]
Then $\bbog$ can be obtained from $\langle P_3, m^*\rangle$, where $|\gs(P_3)|\geq 2$, by a sequence of gain-modified edge pinches. 
\label{thm:16mod}
\end{thm}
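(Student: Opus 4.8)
The plan is to follow the strategy of Fekete and Szeg\H{o} (Theorem~\ref{thm:fekete}), but carried out entirely within the subclass of gain graphs cut out by the sparsity condition~(\ref{eq:gainSparse}). Since $\bbog$ is a $[6,3]$-graph, the underlying multigraph $H$ can by Theorem~\ref{thm:fekete} be reduced to $P_3$ by a reverse sequence of edge pinches $K(6,n,j)$ with $j\le n\le 5$ and $n-j\le 3$; the whole problem is to show that one such reduction step can always be performed while \emph{respecting the gains}, i.e. so that the predecessor graph also satisfies~(\ref{eq:gainSparse}) and so that the pinch, read forwards, is a legal $K^*(6,n,j)$. So I would argue by induction on $|V(H)|$: pick a vertex $z$ of minimum degree (which as in~\cite{fekete} has degree between $6$ and $9$, corresponding to the forms $K(6,n,j)$ above — $6$ non-loop edges plus up to $3$ loops), and try to ``un-pinch'' it, distributing its non-loop edges back onto its neighbours and relabelling gains so that the merged edges carry the summed gains.

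First I would establish the admissibility of the reverse move at the level of counts: removing $z$ and adding back the $j$ merged edges changes $|E|$ by exactly $-6$ and $|V|$ by $-1$, so condition~1 is preserved; the delicate part is showing that a choice of how to merge the edges (and how to partition the $n$ incidences into $j$ "through" edges and $n-j$ loops) exists for which \emph{every} edge subset $Y$ of the smaller graph still satisfies~(\ref{eq:gainSparse}). This is where the gain space enters: when we merge two edges at $z$ into a single edge $e'$, the net gain on $e'$ is forced to be the sum of the two partial gains, and one must verify this cannot shrink $|\gs(Y)|$ in a way that violates the bound. I would handle a would-be violating set $Y'$ in the reduced graph by pulling it back to a set $Y$ in $\bbog$ (adding $z$ and the relevant edges incident to it), applying~(\ref{eq:gainSparse}) to $Y$, and bookkeeping the change in $|V(\cdot)|$, $|\cdot|$ and $|\gs(\cdot)|$; the term $\sum_{i=1}^{|\gs|}(3-i)$ is non-increasing and "saturates" at $|\gs|\ge 2$, which is exactly what makes the slack available. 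This is essentially the gain-graph analogue of the case analysis in~\cite{fekete}, and I expect it to be the main obstacle — one must rule out the bad configurations where no legal redistribution of edges exists, and show that in those configurations the minimum-degree vertex can instead be removed via a different pinch type. The technique of contracting a tight set and inducting, as in Fekete–Szeg\H{o}, should resolve these.

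Second, I would check the base case: the reduction terminates at a single-vertex graph, which as a $[6,3]$-graph is $P_3$ (three loops). The sparsity condition forces the three loop gains $m_1,m_2,m_3$ to satisfy $|\gs(\{m_i,m_j\})|\ge 1$ and $|\langle m_1,m_2,m_3\rangle|\ge 2$, since otherwise a two- or three-edge subset would violate~(\ref{eq:gainSparse}); this is precisely the requirement $|\gs(P_3)|\ge 2$ in the statement, so the terminal graph is of the asserted form $\langle P_3, m^*\rangle$. Finally I would note that each reverse step, reversed, is by construction a gain-modified edge pinch $K^*(6,n,j)$ in the sense of Section~\ref{sec:modEdgePinches}: the "through" edges split as $m_{e_1}+m_{e_2}=m_e$ by our relabelling choice, and condition~(ii) (every new edge subset satisfies~(\ref{eq:gainSparse})) holds because the larger graph $\bbog$ did and the new edges form a subset of its edge set. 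Assembling the reverse sequence in order then expresses $\bbog$ as the result of gain-modified edge pinches applied to $\langle P_3, m^*\rangle$, which is the theorem.
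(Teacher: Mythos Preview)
Your overall inductive strategy---pick a low-degree vertex, split off edges to obtain a smaller $[6,3]$-$\Tor^3$-graph, and induct---is exactly the route the paper takes, and your treatment of the base case $\langle P_3, m^*\rangle$ is fine. The gap is in the middle step: you have not supplied the key lemma that makes the Fekete--Szeg\H{o} argument go through in the gain-graph setting.

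Concretely, the ``pull back a violating $Y'$ to a set $Y$ in $\bbog$ and bookkeep'' idea does not close. If $Y'$ contains $k$ of the merged edges, the pullback $Y$ has $|Y|=|Y'|+k$ and $|V(Y)|=|V(Y')|+1$ with the same gain space, so~(\ref{eq:gainSparse}) for $Y$ only yields $|Y'|\leq 6|V(Y')|-k+\sum(3-i)$; since $k\leq j\leq 5$ this is weaker than the bound you need. The real obstruction is that a would-be split-off can be blocked by a \emph{tight} set $X$ (one with $b_H(X)=0$) containing both endpoints, and to show that not \emph{every} pair is blocked one needs the uncrossing step: if $X$ and $Y$ are both tight, so is $X\cup Y$. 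In Fekete--Szeg\H{o} this is immediate from submodularity of the count $6|V(\cdot)|-|\cdot|$, but here $b_H$ carries the extra term $\sum_{i=1}^{|\gs(\cdot)|}(3-i)$, and it is not at all automatic that tightness is preserved under union---one must control how $|\gs(X\cup Y)|$ and $|\gs(X\cap Y)|$ relate to $|\gs(X)|$ and $|\gs(Y)|$. The paper handles this (Propositions~\ref{prop:union} and~\ref{prop:unionNeg}) by a case analysis on the pair $(|\gs(X)|,|\gs(Y)|)$, using connectivity of the intersection to decompose any new cycle in $X\cup Y$ into cycles lying entirely in $X$ or entirely in $Y$; this is the genuine new content, and your phrase ``the technique of contracting a tight set and inducting, as in Fekete--Szeg\H{o}, should resolve these'' is precisely where that argument would need to appear. (A minor point: the minimum-degree vertex has degree at most $11$, not $9$; with $\deg_H(s)=6+n$ and $n\leq 5$ one can have up to $11$.)
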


The proof of this result is based on the proof of Theorem \ref{thm:fekete}, as recorded in \cite{fekete}, with a few small modifications. 

By {\it splitting off} edges of the gain graph $\bbog$ we mean the operation of replacing the edges $e = \{u, v; m_e\}$ and $f = \{u, w; m_f\}$ with the new edge $ h = \{v, w; m_f - m_e\}$, which we call the {\it split edge}. The resulting graph we denote by $\langle G^{ef}, m^{ef} \rangle$, where $G^{ef} = (V, E-e-f+h)$, and $m^{ef}$ is the original gain assignment modified to include the new gain on the added edge $h$. 

\begin{thm}[analogous to \cite{fekete}, Theorem 2.2]
Let $\bbog$, $H =(V+s, E)$ be a $[6,3]$-graph satisfying (\ref{eq:gainSparse}), and let $n, j$ be integers such that $deg_H(s) = 6+n$, $i_H(s) = n-j$, where $j \leq n \leq 6$, and $n-j \leq 3$. Then we can split off $j$ pairs of edges so that after deleting $s$, the resulting graph is a $[6,3]$-graph which also satisfies (\ref{eq:gainSparse}). 
\label{thm:22mod}
\end{thm}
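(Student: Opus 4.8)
The plan is to follow the argument of Fekete and Szeg\H{o} for ordinary $[6,3]$-graphs, but to track the gain space throughout and verify that the splitting-off operations can be chosen compatibly with the gains. Let $H = (V+s, E)$ be as in the hypothesis. First I would set up the counting machinery: call a subset $X \subseteq V$ \emph{tight} (for the ambient bar-joint-type bound) if $i_H(X) = 6|X| - 6 + \sum_{i=1}^{|\gs(X)|}(3-i)$, and more generally keep track of the ``deficiency'' $6|X| - 6 + \sum_{i=1}^{|\gs(X)|}(3-i) - i_H(X) \geq 0$. The key structural fact, exactly as in \cite{fekete}, is that the tight sets are closed under intersection and union when the two sets overlap in at least one vertex --- the only new point is that the gain-space correction term $\sum_{i=1}^{|\gs|}(3-i)$ is submodular in the appropriate sense along such intersections/unions (it is a concave function of $|\gs|$, and $|\gs(X \cap X')| + |\gs(X \cup X')| \le |\gs(X)| + |\gs(X')|$ when $X \cap X' \ne \emptyset$, since the gain spaces are nested by span containment on the union side). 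This submodularity is what makes the whole Fekete--Szeg\H{o} induction go through verbatim.

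Next I would run the splitting-off induction on $j$. Given that $\deg_H(s) = 6+n$ and $s$ carries $n-j$ loops (contributing $2(n-j)$ to the degree, so $s$ has $6+n - 2(n-j) = 6 + 2j - n$ non-loop edge-ends, i.e.\ $6+2j-n$ edges to neighbours counted with multiplicity --- note $6+2j-n \ge 2j$ since $n \le 6$, so there are always at least $2j$ non-loop ends and the requested $j$ pairs exist numerically), I want to find $j$ disjoint pairs of edges at $s$ whose splitting-off keeps the graph $[6,3]$-sparse and keeps (\ref{eq:gainSparse}) satisfied. The obstruction to splitting off a single pair $e = \{u,s;m_e\}$, $f = \{s,w;m_f\}$ is the existence of a set $X$ with $u, w \in X$, $s \notin X$, that is ``too full'' to absorb the new edge $h = \{u,w; m_f - m_e\}$ --- i.e.\ $i_H(X) + 1$ would violate the bound, taking into account that adding $h$ might also enlarge $\gs(X)$ by $\langle m_f - m_e\rangle$. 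One shows as in \cite{fekete} that if \emph{every} admissible pair is blocked, the blocking sets (one per pair) can be combined, via the submodularity above, into a single set contradicting the sparsity of $H$ itself or the degree hypothesis on $s$. The gain bookkeeping enters precisely here: a blocking set $X$ either already has $m_f - m_e \in \gs(X)$ (so $h$ consumes a genuine $6|X|-6$-type slot and the argument is identical to the gainless case), or $m_f - m_e \notin \gs(X)$, in which case $\gs(X)$ grows and the correction term increases --- but it increases by at most $(3 - |\gs(X)|) \le 2$, and one checks this is never enough to create a violation unless $X$ was already at the very edge, which again feeds into the combination argument. I would organize this as: (a) the case $j=1$ (a single split-off), handled by the blocking-set dichotomy just described; (b) the inductive step, where after performing one valid split-off we have reduced $j$ by one and $\deg(s)$ by two, and must check the hypotheses of the theorem still hold for the smaller configuration --- in particular that no previously-performed split has destroyed the sparsity margin needed for the next one, which is the standard ``greedy admissibility is maintained'' lemma.

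Finally, after all $j$ pairs are split off, $s$ has degree $6+n - 2j$ with $n - j$ loops, hence $6+n-2j - 2(n-j) = 6 - n \ge 0$ non-loop edges remaining, and since $n \le 6$ and each such edge would have to be a second edge from $s$... actually after the split-offs $s$ is incident only to its $n-j$ loops (the $6 - n$ count: we started with $6 + 2j - n$ non-loop ends and removed $2j$ of them), so deleting $s$ removes exactly $n-j$ loops and the remaining graph $H'$ has $|E(H')| = |E| - (6+n) + j = |E| - 6 - (n-j) $... I would just verify directly that $|E(H')| = 6|V(H')| - 3$ from $|E(H)| = 6|V(H)| - 3$ and $|V(H')| = |V(H)| - 1$, using that the net effect of ``delete $s$ and its $6+n$ incident edge-ends, add $j$ split edges'' is $-6$ edges, i.e.\ $|E(H')| = |E(H)| - 6 = 6(|V(H)|-1) - 3 + 6 - 6 = 6|V(H')| - 3$. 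Sparsity and (\ref{eq:gainSparse}) for all subsets of $H'$ follow because every subset of $H'$ corresponds to a subset of $H$ (on $V$) with at most as many edges after accounting for the split edges, by the blocking-set analysis; here one uses that the split edge $h$ lies only in sets containing both its endpoints, and those were exactly the sets we verified could absorb it. \textbf{The main obstacle} I anticipate is the combination step in (a)--(b): verifying that the submodular inequality for the gain-corrected count is strong enough that a family of blocking sets for all candidate pairs really does contradict the global sparsity of $H$ --- the gainless Fekete--Szeg\H{o} argument uses submodularity of $i_H(\cdot)$ together with $k|X| - \ell$ being modular, and I need the concave correction term not to spoil this. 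I expect it works because concavity of $\sum_{i=1}^{t}(3-i)$ in $t$ goes \emph{the right way} (it makes the target function submodular, aligned with $i_H$ being submodular), but this requires a careful case analysis on whether the relevant gain spaces coincide or strictly nest, and that case analysis is the technical heart of the proof.
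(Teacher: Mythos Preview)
Your high-level plan---follow Fekete--Szeg\H{o}, track gains, and show blocking sets can be combined---is exactly the paper's strategy. But the mechanism you propose for the combination step is wrong, and this is not a detail: it is, as you yourself say at the end, the technical heart.

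You claim that $|\gs(X \cap X')| + |\gs(X \cup X')| \le |\gs(X)| + |\gs(X')|$ whenever $X \cap X' \ne \emptyset$, justifying this by ``nesting by span containment on the union side.'' This inequality is false. Take edge sets $X = \{\{a,b;0\}, \{b,c;(1,0,0)\}\}$ and $Y = \{\{a,b;0\}, \{a,c;0\}\}$: both are trees, so $|\gs(X)| = |\gs(Y)| = 0$, and $|\gs(X\cap Y)| = 0$, but $X \cup Y$ contains a triangle with net gain $(1,0,0)$, so $|\gs(X\cup Y)| = 1$. The same example shows the concave correction $c(t) = \sum_{i=1}^t(3-i)$ composed with $|\gs|$ is not submodular either ($c(0)+c(0)=0 < 2 = c(1)+c(0)$). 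The point is that $\gs(X\cup Y)$ can strictly contain $\gs(X)+\gs(Y)$: new cycles appear in the union that live in neither piece, and nothing about span containment controls their gains.

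What the paper actually proves (Propositions~\ref{prop:union} and~\ref{prop:unionNeg}) is the much more restricted statement that \emph{if $b_H(X)=b_H(Y)=0$} then $b_H(X\cup Y)=0$, and similarly for the blocking condition. The argument is a genuine case analysis on the pair $(|\gs(X)|,|\gs(Y)|)$. The key geometric idea---which has no counterpart in your submodularity framing---is that tightness of $X$ and $Y$ forces the intersection $X\cap Y$ to be \emph{connected}, and then any cycle in $X\cup Y$ can be rerouted through the intersection and decomposed as a sum of cycles lying entirely in $X$ or entirely in $Y$. This is why no new gain directions appear in the union; it uses the hypothesis $b_H=0$ essentially and is not a general set-function inequality.

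Two smaller points. First, the paper works with \emph{edge} subsets $Y\subseteq E(H)$ rather than vertex subsets; this matters because $V(X\cap Y)$ need not equal $V(X)\cap V(Y)$, and the counting in the union lemma is cleaner in the edge formulation. Second, your greedy induction on $j$ (find one admissible pair, recurse) is not quite what the paper does: following Fekete--Szeg\H{o}, the paper assumes a maximal number $p<j$ of splittings have been performed, takes the union of all blocking sets to get a single maximal obstacle $X$, and then finds a previously split edge lying outside $X$ to rearrange the splittings and contradict maximality of $p$. Your greedy version would work \emph{if} one admissible pair always exists, but establishing that is essentially the $p=0$ case of the same argument and still needs Propositions~\ref{prop:union} and~\ref{prop:unionNeg}.
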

For brevity, we say that a $[6,3]$-graph which satisfies (\ref{eq:gainSparse}) is a {\it $[6,3]$-$\Tor^3$-graph}.
If $\bbog$, where $H=(V+s, E)$, is a $[6,3]$-$\Tor^3$-graph and $e, f$ are edges in $E\bbog$, then splitting off $e$ and $f$ is called {\it admissible} if $\langle H^{ef}, m^{ef} \rangle$ is also a $[6,3]$-$\Tor^3$-graph. Finally we define \[b_H(Y) = 6|V(Y)| - 6 + \sum_{i=1}^{|\gs(Y)|}(3-i) - |Y|.\]
Some authors have used a similar measure with the name ``deficiency", since $b_H$ describes how many edges can be added while maintaining independence. 
Note that a body-bar orbit graph $\bbog$ is a $[6,3]$-$\Tor^3$-graph if and only if $g_H(Y) \geq 0$ for all subsets of edges $Y \neq \emptyset$, $Y \subseteq E(H)$. 

To prove Theorem \ref{thm:22mod}, we make use of the following lemma:
\begin{lem}[analogous to \cite{fekete}, Lemma 2.3]
Let $\bbog$, $H=(V+s, E+\{e,f\})$ be a $[6,3]$-$\Tor^3$-graph, where $e = \{s, u; m_e\}$, $f=\{s, v; m_f\}$ are edges incident to $s$, $(u, v \in V)$. Then the pair $e, f$ is admissible if and only if $\nexists X \subseteq E$ with $u, v \in V(X)$, such that
\[b_H(X) = 0 \textrm{\ and \ } b_H(X+h) = -1,\]
where $h = \{u,v; m_f - m_e\}$ is the split edge. 
\label{lem:23mod5}
\end{lem}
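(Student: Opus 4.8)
The plan is to follow the structure of the analogous result in \cite{fekete}, adapting it to track the gain-space term in the sparsity function $b_H$. Recall that splitting off $e,f$ produces $\langle H^{ef}, m^{ef}\rangle$ with $H^{ef} = (V, E + h)$ where $h = \{u,v; m_f - m_e\}$; since $H$ is a $[6,3]$-$\Tor^3$-graph with $|V(H)| = |V| + 1$ and $\deg_H(s) = 2$ in this minimal incarnation (only $e,f$ are at $s$), a direct count shows $|E(H^{ef})| = 6|V(H^{ef})| - 3$, so the splitting is admissible exactly when the sparsity inequality~($\ast$) is preserved for every nonempty subset of $E + h$.

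First I would observe that the only subsets whose sparsity could be violated in $H^{ef}$ are those containing $h$: any $Y \subseteq E$ already satisfied~($\ast$) in $H$, and deleting $s$ together with $e,f$ cannot hurt a set avoiding $h$. So suppose $Z \subseteq E + h$ with $h \in Z$ violates~($\ast$), and write $Z = X + h$ with $X \subseteq E$. The key point is the behaviour of $b_H$ under adding $h$: since $V(X+h) \supseteq V(X)$ and $\gs(X+h) \supseteq \gs(X)$ while $|X+h| = |X|+1$, we always have $b_H(X+h) \geq b_H(X) - 1$ (adding one edge drops the deficiency by at most one, and can only be offset by the vertex and gain terms). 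Hence a violation $b_H(X+h) = -1$ forces $b_H(X) = 0$; conversely, if no such $X$ exists with $u,v \in V(X)$, then $b_H(X+h) \geq 0$ for all candidate $X$, because when $b_H(X) \geq 1$ we get $b_H(X+h) \geq 0$ automatically, and when $b_H(X) = 0$ the hypothesis rules out the drop to $-1$. One must also handle the cases where $h$ is added to a set $X$ with only one (or neither) of $u,v$ already present in $V(X)$: then $|V(X+h)|$ increases, contributing $+6$ or $+12$, which far outweighs the $-1$ from the new edge, so no violation arises. This reduces the whole question to subsets $X$ with $u, v \in V(X)$, which is exactly the statement of the lemma.

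The two directions are then essentially the contrapositive of each other. For necessity: if such an $X$ exists, then $X + h$ violates~($\ast$) in $H^{ef}$, so the splitting is not admissible. For sufficiency: if the splitting is not admissible, some subset of $E+h$ violates~($\ast$); by the reduction above it must be of the form $X+h$ with $u,v \in V(X)$ and $b_H(X+h) = -1$, which by the inequality $b_H(X+h) \geq b_H(X) - 1$ forces $b_H(X) = 0$, giving the desired $X$.

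The main obstacle I anticipate is the careful bookkeeping of the gain-space term $\sum_{i=1}^{|\gs(X)|}(3-i)$ under the operations involved: one must verify that $|\gs(X+h)| \in \{|\gs(X)|, |\gs(X)|+1\}$ and that in either case the contribution to $b_H$ changes by the expected amount (the increment $3 - |\gs(X+h)|$ when a new independent gain appears), and similarly that after deleting $s$ the gain space of a subset of $E$ is unchanged. This is the place where the argument genuinely departs from \cite{fekete}, and it relies on $h$ being a single edge so that $\gs$ can grow by at most one dimension; the fact that the coefficients $3 - i$ are nonnegative for $i \leq 3$ (guaranteed by $|\gs| \leq 3$ in dimension~3, which follows from condition~($\ast$) itself as noted after Theorem~\ref{thm:main}) is what keeps $b_H(X+h) \geq b_H(X) - 1$ honest.
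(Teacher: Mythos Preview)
The paper omits the proof of this lemma (``We omit the proof''), so there is no argument to compare against; your approach is the natural one and is essentially correct. The reduction to edge sets $X \subseteq E$ with $u, v \in V(X)$, together with the inequality $b_H(X+h) \geq b_H(X) - 1$ (using that the gain-space contribution $\sum_{i=1}^{|\gs|}(3-i)$ is monotone non-decreasing in $|\gs|$ for $|\gs| \leq 3$, and that adding a single edge can raise $|\gs|$ by at most one), is exactly what is needed, and your handling of the cases where $u$ or $v$ lies outside $V(X)$ is fine.

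One correction to your opening paragraph: the global edge-count claim is both wrong and unnecessary. The notation $H=(V+s,\,E+\{e,f\})$ does \emph{not} assert that $e,f$ are the only edges at $s$; in general $E$ contains further edges incident to $s$ (indeed, in a $[6,3]$-tight graph every vertex is incident to at least six edges, by applying sparsity to $V(H)\setminus\{s\}$). Moreover, splitting off a single pair always drops the edge count by one while keeping the vertex set, so $H^{ef}$ is never $[6,3]$-tight in the strict sense. The paper's use of ``admissible'' here should be read as requiring that $H^{ef}$ remain $[6,3]$-$\Tor^3$-\emph{sparse} (i.e.\ that~($\ast$) hold for all nonempty edge subsets), which is precisely what the rest of your argument establishes; just drop the count and proceed directly to the sparsity analysis. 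As a very minor aside, the bound $|\gs|\leq 3$ holds simply because gains live in $\mathbb Z^3$, not as a consequence of~($\ast$).
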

We omit the proof. See Figure \ref{fig:admissible} for examples of admissible and non-admissible pairs of edges. 
\begin{figure}[htbp]
\begin{center}
\includegraphics[width=2in]{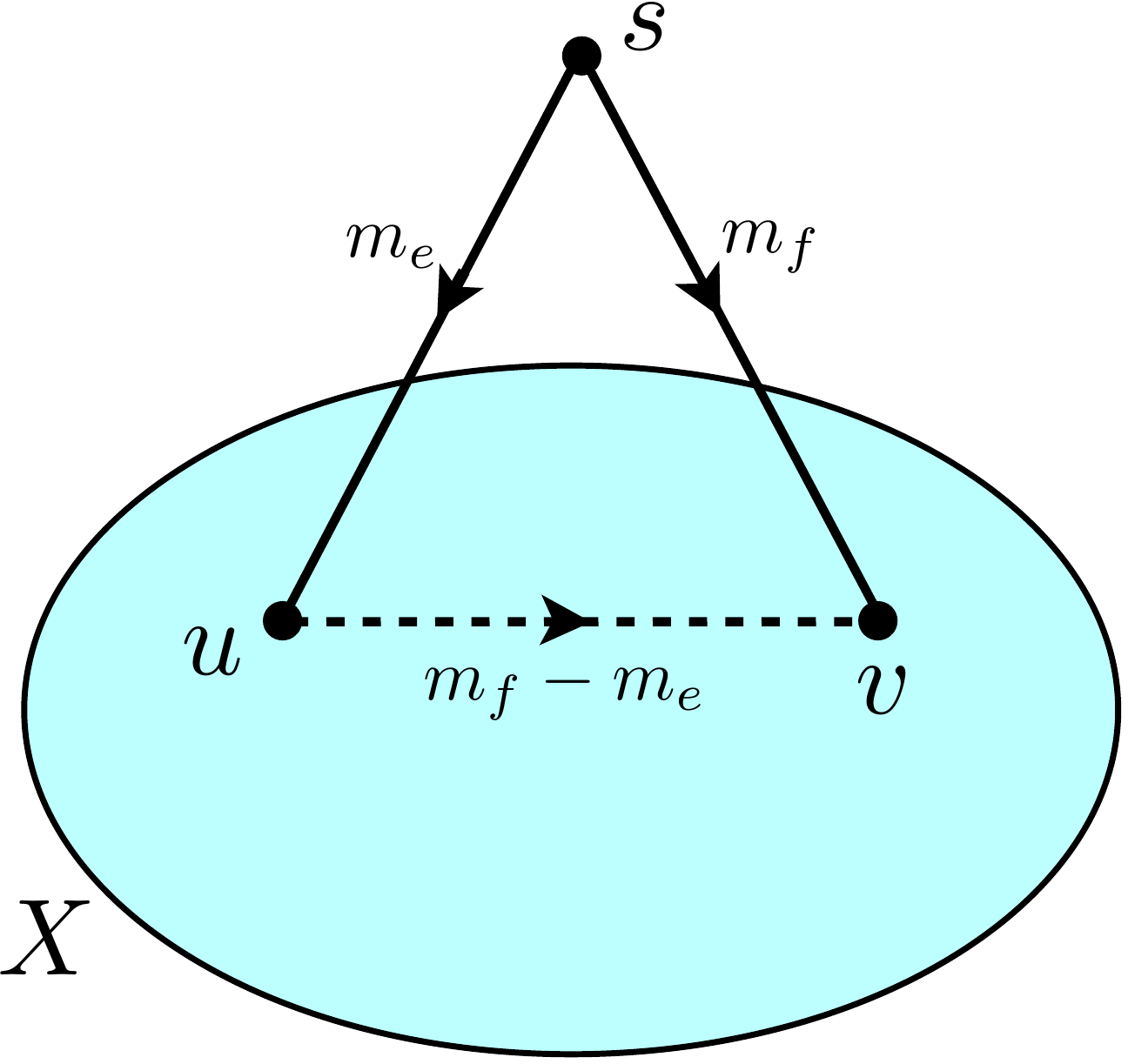}
\caption{Suppose that all edges in $X$ have trivial gains, and $|X| = 6|V(X)| - 6$ (i.e. $|\gs(X)| = 0$ and $b_H(X) = 0$). If $m_e = m_f$, then $b_H(X +h) = -1$, and the candidate pair (edge $e$ and $f$) is not admissible. However, if $m_e \neq m_f$ (for example, $m_e = (0,0,0)$, and $m_f = (1,0,0)$), then $|\gs(X+h)| = 1$, and therefore $b_H(X+h) = 1$, and the candidate pair is admissible.}
\label{fig:admissible}
\end{center}
\end{figure}

The following proposition is central to the proof of Theorem \ref{thm:main}.
\begin{prop}
Let $\bbog$ be a $[6,3]$-$\Tor^3$-graph, and let $X$ and $Y$ be subsets of edges of $H$. If $b_H(X) = b_H(Y) = 0$,  then $b_H(X\cup Y) = 0$ too. 
\label{prop:union}
\end{prop}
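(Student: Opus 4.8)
\textbf{The plan} is to deduce the proposition from the submodular-type inequality
\[
b_H(X) + b_H(Y) \ \geq\ b_H(X\cup Y) + b_H(X\cap Y),
\]
which I would prove whenever $X\cap Y\neq\emptyset$ (so that all four edge sets occurring are non-empty), combined with the fact that $b_H\geq 0$ on every non-empty edge set --- precisely the hypothesis that $H$ is a $[6,3]$-$\Tor^3$-graph. Granting the inequality the proposition is immediate: the right-hand side is $\geq 0+0$ while the left-hand side equals $0+0$, so $b_H(X\cup Y)=b_H(X\cap Y)=0$. I would assume $X\cap Y\neq\emptyset$ throughout --- it is genuinely needed (two vertex-disjoint tight sets, for instance three independent loops on each of two distinct bodies, already violate the statement without it), and it holds automatically in every application of the proposition in Section~\ref{sec:combinatorialStep}.

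\textbf{Decomposing the inequality.} Write $b_H(Y)=6|V(Y)|-6+f(|\gs(Y)|)-|Y|$ where $f(k)=\sum_{i=1}^k (3-i)$, so that $f$ takes the values $0,2,3,3$ on $k=0,1,2,3$; in particular $f$ is non-decreasing and concave, and is already maximal once its argument reaches $\dim\Z^3=3$. The edge count is modular, $|X|+|Y|=|X\cup Y|+|X\cap Y|$, so those terms cancel. The vertex count is submodular: $V(X\cup Y)=V(X)\cup V(Y)$ and $V(X\cap Y)\subseteq V(X)\cap V(Y)$ give $|V(X)|+|V(Y)|-|V(X\cup Y)|-|V(X\cap Y)|=|V(X)\cap V(Y)|-|V(X\cap Y)|\geq 0$. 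Hence the whole inequality reduces to bounding the gain term: it suffices to show
\[
f(|\gs(X\cup Y)|)+f(|\gs(X\cap Y)|)-f(|\gs(X)|)-f(|\gs(Y)|)\ \leq\ 6\bigl(|V(X)\cap V(Y)|-|V(X\cap Y)|\bigr).
\]

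\textbf{The gain term --- the main obstacle.} This is where all the real work lies, and it is subtler than the corresponding step in Fekete and Szeg\H{o} because $Y\mapsto\gs(Y)$ is \emph{not} a matroid rank function: combining $X$- and $Y$-edges can create cycles of $X\cup Y$ whose net gains lie outside $\gs(X)+\gs(Y)$ --- a collinear-triangle-type configuration already makes the left side of the last display strictly positive. My approach: start from $\gs(X)+\gs(Y)\subseteq\gs(X\cup Y)$ and $\gs(X\cap Y)\subseteq\gs(X)\cap\gs(Y)$, so $|\gs(X\cup Y)|\geq\max(|\gs(X)|,|\gs(Y)|)$ and $|\gs(X\cap Y)|\leq\min(|\gs(X)|,|\gs(Y)|)$; then show that the excess $|\gs(X\cup Y)|-\dim(\gs(X)+\gs(Y))$ --- the number of genuinely new net-gain directions produced across the cut --- is bounded by how badly the subgraph $X\cap Y$ fails to connect up the shared vertex set $V(X)\cap V(Y)$, which is exactly what the right-hand quantity $|V(X)\cap V(Y)|-|V(X\cap Y)|$ measures (here one uses the standard gain-graph fact that each extra component or isolated shared vertex glued between the two pieces adds at most one dimension). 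Feeding this into the concavity, monotonicity and saturation of $f$ --- together with a short finite case check over $|\gs(X)|,|\gs(Y)|\in\{0,1,2,3\}$, where the point is that a gain space of dimension $\geq 2$ already contributes the maximal $f$-value $3$ --- yields the displayed bound. I expect this balancing of the gain term against the vertex term to be the only delicate point; everything else is routine bookkeeping plus the final appeal to $b_H\geq 0$.
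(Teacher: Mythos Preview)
Your submodular-type inequality is \emph{false} in general, so the plan cannot work as written. Here is a counterexample sitting inside any $[6,3]$-$\Tor^3$-graph large enough to contain it. Take vertices $a,b,c,d$ and edges
\[
e_1=\{a,b;0\},\quad e_2=\{c,d;0\},\quad e_3=\{b,c;0\},\quad e_4=\{b,c;(1,0,0)\}.
\]
Let $X=\{e_1,e_2,e_3\}$ and $Y=\{e_1,e_2,e_4\}$, both spanning trees on $\{a,b,c,d\}$, so $|\gs(X)|=|\gs(Y)|=0$. Then $X\cap Y=\{e_1,e_2\}$ is non-empty with $V(X\cap Y)=\{a,b,c,d\}=V(X)\cap V(Y)$, so your vertex-slack term $|V(X)\cap V(Y)|-|V(X\cap Y)|$ vanishes. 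But $X\cup Y$ contains the $2$-cycle $e_3,e_4$ with net gain $(1,0,0)$, so $|\gs(X\cup Y)|=1$. Direct computation gives $b_H(X)=b_H(Y)=15$ while $b_H(X\cup Y)=b_H(X\cap Y)=16$, and $30\not\geq 32$. Equivalently, your displayed gain-term bound reads $f(1)+f(0)-f(0)-f(0)=2\leq 0$, which fails.

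The error is in your identification of what controls the excess $|\gs(X\cup Y)|-\dim(\gs(X)+\gs(Y))$. The quantity $|V(X)\cap V(Y)|-|V(X\cap Y)|$ only counts shared vertices not covered by shared edges; it says nothing about the number of \emph{components} of $X\cap Y$, and it is precisely the disconnection of $X\cap Y$ that allows new gain directions to appear across the cut. In the example, $X\cap Y$ covers every shared vertex yet has two components, and the new cycle runs between them. So the ``standard gain-graph fact'' you invoke is misapplied.

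The paper's proof proceeds differently and does not attempt a general submodular inequality. It works directly under the hypothesis $b_H(X)=b_H(Y)=0$, taking cases on $(|\gs(X)|,|\gs(Y)|)$. In the delicate cases (both gain spaces $0$, or both $1$, or $0$ and $1$) the counting forced by $b_H(X)=b_H(Y)=0$ together with the ambient $[6,3]$-$\Tor^3$ sparsity yields a lower bound on $|X\cap Y|$ strong enough to prove that $X\cap Y$ is \emph{connected}. Once connectedness is in hand, any cycle of $X\cup Y$ can be rewritten as a sum of cycles each lying entirely in $X$ or entirely in $Y$ (routing through the connected intersection), so no new gain directions can appear and $|\gs(X\cup Y)|$ is pinned down exactly. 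The tightness hypothesis $b_H=0$ is therefore not merely used at the end; it is what drives the structural fact about $X\cap Y$ that makes the gain-space argument go through. Your counterexample above has $b_H(X)=b_H(Y)=15$, comfortably far from tight, which is why $X\cap Y$ is allowed to be disconnected there.
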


\begin{proof}
We first note that if $X \cap Y = \emptyset$, then the claim is trivial. We therefore assume that $X \cap Y \neq \emptyset$. We also remark that 
\[\sum_{i=1}^2(3-i) = \sum_{i=1}^3(3-i) = 3,\]
and therefore we need not consider the case $|\gs(Y)| =2$ separately from $|\gs(Y)| = 3$. We thus take cases as follows:
\begin{table}[htdp]
%\caption{default}
\begin{center}
\begin{tabular}{|c|c|c|}
\hline
Case & $|\gs(X)|$ & $|\gs(Y)|$ \\
\hline
a & 0 & 0 \\
b & 1 & 1 \\
c & 2 & 2 \\
d & 0 & 1 \\
e & 0 & 2 \\
f & 1 & 2\\
\hline
\end{tabular}
\end{center}
\label{default}
\end{table}%

\noindent {\bf Case a:} $|\gs(X)| = |\gs(Y)| = 0$.\\
Since $b_H(X) = b_H(Y) = 0$, we have $|X| = 6|V(X)| - 6$ and $|Y| = 6|V(Y)| - 6$. It must also be the case that $|\gs(X \cap Y)| = 0$, and therefore 
\begin{equation} 
|X \cap Y| \leq 6|V(X \cap Y)| - 6. 
\label{eq:caseA1}
\end{equation}
We have 
\begin{eqnarray}
	|X \cup Y| + |X \cap Y| & = & |X| + |Y|  \nonumber \\
					& = & (6|V(X)| - 6) + (6|V(Y) - 6) \nonumber \\
					& = & 6|V(X \cup Y)| + 6|V(X \cap Y)| - 12. 
\label{eq:caseA2}
\end{eqnarray}
Together, (\ref{eq:caseA1}) and (\ref{eq:caseA2}) imply that 
\[6|V(X \cup Y)|-6 \leq |X \cup Y|.\]
Since $\bbog$ is a $[6,3]$-$\Tor^3$-graph, $6|V(X \cup Y)|-6 < |X \cup Y|$ if and only if $|\gs(X \cup Y)|>0$. We will show that $|\gs(X \cup Y)| = 0$, and therefore $6|V(X \cup Y)|-6 = |X \cup Y|$, and $b_H(X \cup Y) = 0$, as desired. 

Suppose toward a contradiction that $|\gs(X \cup Y)| = 1$. Then $|X \cup Y| \leq 6|V(X \cup Y)| - 4$, and by (\ref{eq:caseA2}), 
\begin{eqnarray}6|V(X \cap Y)| - 8 \leq |X \cap Y|.
\label{eq:caseA3}
\end{eqnarray}
For a cycle to be created with non-trivial gain, the intersection $V(X \cap Y)$ must contain at least two vertices, since any such cycle must contain edges from both $X$ and $Y$. It follows that $4 \leq |X \cap Y|$, and the intersection is non-trivial. Furthermore, the intersection must be connected. (Suppose that $X\cap Y$ is disconnected, into say $E_1$ and $E_2$. We must have $|E_i| \leq 6|V(E_i)| - 6$, and therefore $|E_1| + |E_2| \leq 6(|V(E_1)| + |V(E_2)|)-12$, but this contradicts (\ref{eq:caseA3}).)

Suppose that a non-zero cycle is created containing $a, b \in V(X\cap Y)$. In the simplest case, the cycle consists of a single sequence of edges from $X$, followed by a single sequence of edges from $Y$ (see Figure \ref{fig:caseA1}). We will treat other cases separately. Suppose that the cycle is as follows $C = a, C_X, b, C_Y, a$, where $C_X$ is a path of vertices and edges in $X$, and $C_Y$ is a path of vertices and edges in $Y$. Suppose further that the path $C_X$ has gain $m_X$, and the path $C_Y$ has gain $m_Y$. Since the intersection $(V(X\cap Y), X \cap Y)$ is connected, there is a path from $a$ to $b$ within the intersection. Suppose this path has gain $m_{int}$. Then we may write the cycle $a, C_X, b, C_Y, a$ as $a, C_X, b, -C_{int}, a, C_{int}, b, C_Y, a$. 
But note that the cycle $a, C_X, b, -C_{int}, a$ is completely contained in the edge set $X$, hence must have net gain zero. Similarly, $a, C_{int}, b, C_Y, a$ is completely contained in the edge set $Y$, hence must have net gain zero. It follows that the net gain of the cycle $a, C_X, b, C_Y, a$ is:
\[m_X + m_Y = (m_X - m_{int}) + (m_{int} +m_Y) = 0 + 0 = 0.\]

\begin{figure}[htbp]
\begin{center}
\includegraphics[width=3in]{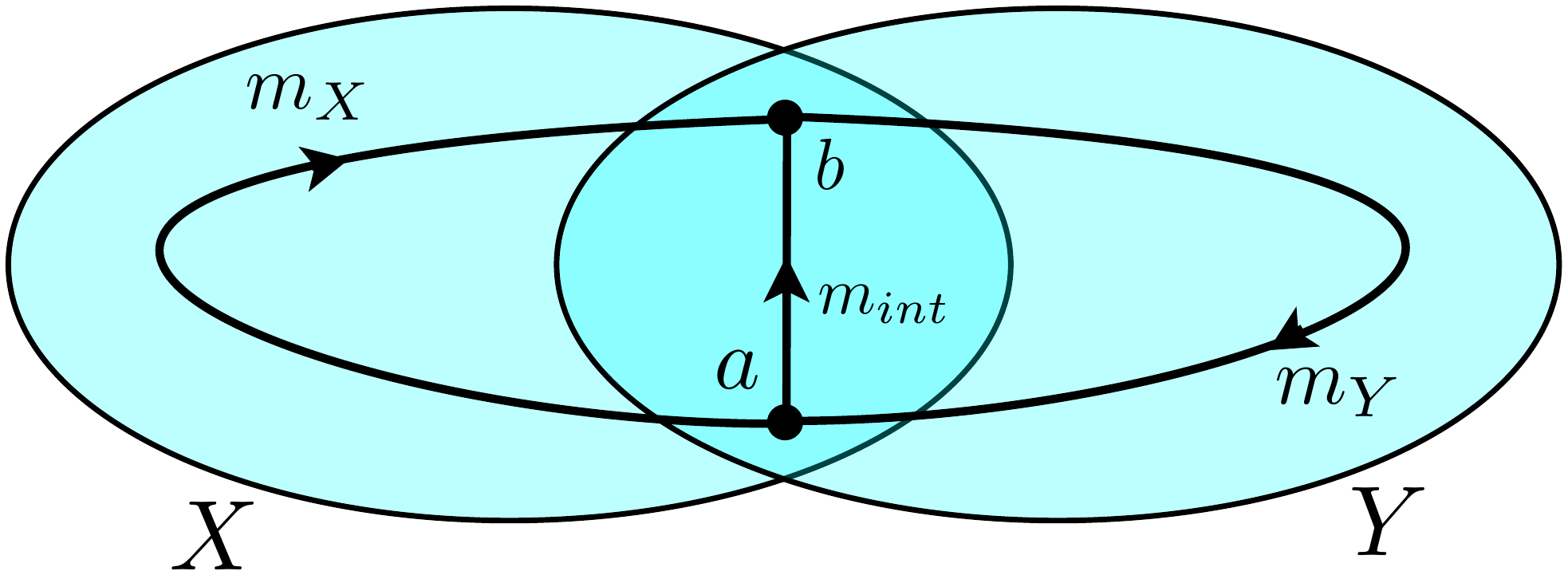}
\caption{A simple cycle with net gain $m_X + m_Y$.}
\label{fig:caseA1}
\end{center}
\end{figure}

It is a small extension to include cases where the cycle alternates between edges in $X$ and edges in $Y$ more than once (see Figure \ref{fig:caseA2}). The cycle $a, C_1, c_1, C_2, c_2, C_3, b, C_Y, a$ can also be written as the sum of cycles in $X$ and cycles in $Y$: 
\[a \underbrace{\xrightarrow{m_1} c_1 \rightarrow}_{0} a \underbrace{\rightarrow c1 \xrightarrow{m_2} c_2 \rightarrow}_{0} a \underbrace{\rightarrow c_2 \xrightarrow{m_3} b \rightarrow}_{0} a \underbrace{\rightarrow b \xrightarrow{m_Y}}_{0} a.\]
Here the first and third closed cycles at $a$ have net gain $0$ since they are completely contained in $X$, and the second and fourth closed cycles at $a$ have net gain $0$ because they are completely contained in $Y$. We have shown that it is not possible to have any cycles in $X\cup Y$ with non-zero gain, hence we must have $|\gs(X \cup Y)|=0$, and therefore $|X\cup Y| = 6|V(X \cup Y)| - 6$ and $b_H(X \cup Y) = 0$, as desired.\\

\begin{figure}[htbp]
\begin{center}
\includegraphics[width=3in]{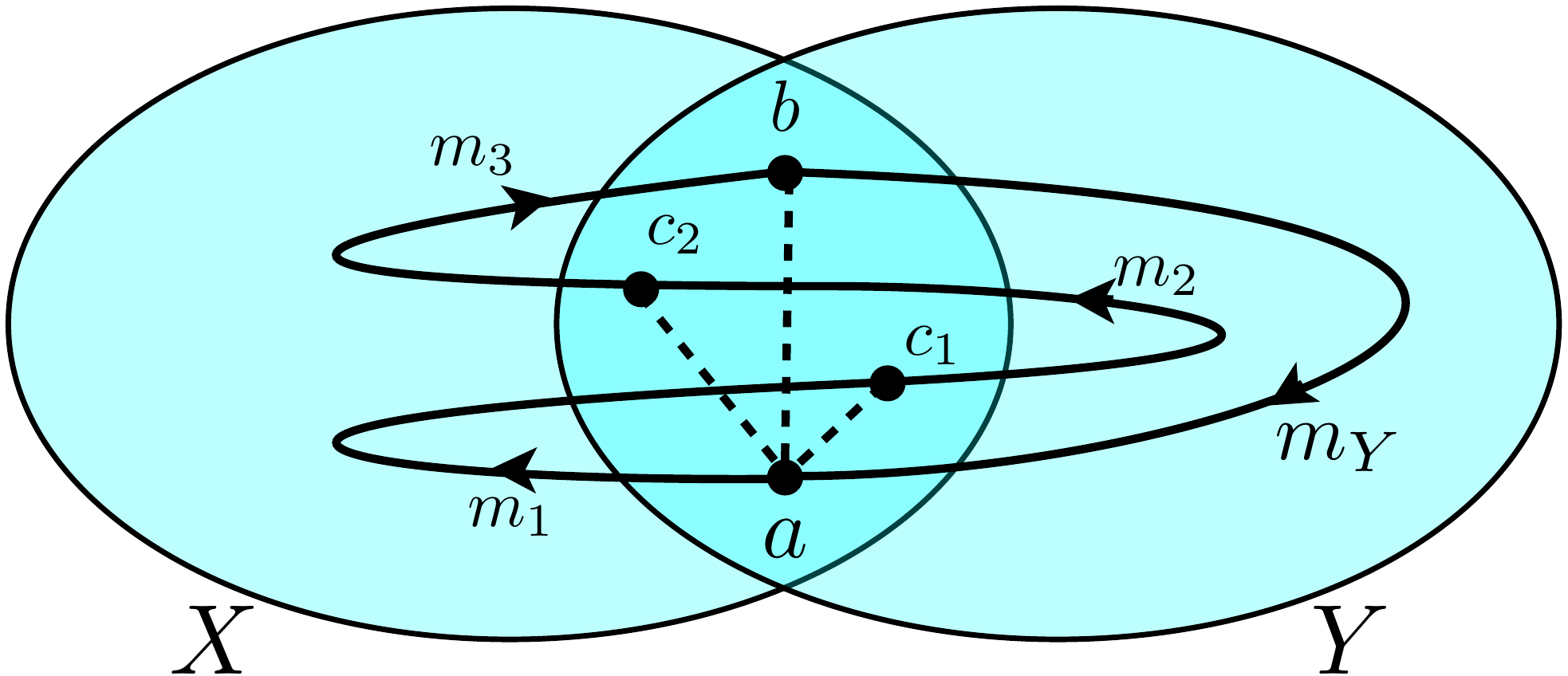}
\caption{A cycle containing edges of both edge sets $X$ and $Y$, but which alternates between paths in $X$ and paths in $Y$.}
\label{fig:caseA2}
\end{center}
\end{figure}

\noindent {\bf Case b:} $|\gs(X)| = |\gs(Y)| = 1$.\\
Since $b_H(X) = b_H(Y) = 0$, we have $|X| = 6|V(X)| - 4$ and $|Y| = 6|V(Y)| - 4$. Then 
\begin{equation} 
6|V(X \cup Y)| + 6|V(X \cap Y)| - 8 = |X \cap Y| + |X \cup Y|.
\label{eq:caseB1}
\end{equation}
Now suppose that $|\gs(X \cap Y)| = 0$. Then $|X \cap Y| \leq 6|V(X \cap Y)| - 6$. Then by (\ref{eq:caseB1}), $6|V(X \cup Y)| - 2 \leq |X \cup Y|$, which is a contradiction, since no subgraph of a $[6,3]$-tight graph may satisfy this. Therefore, it must be the case that $|\gs(X \cap Y)| = 1$ (it cannot be greater than $1$, since it is a subset of both $X$ and $Y$). Hence 
\[|X \cap Y| \leq 6|V(X \cap Y)| -4, \textrm{ \ and}\]
\[|X \cup Y| \geq 6|V(X \cup Y)| -4.\]
Is it possible that $|\gs(X \cup Y)|=2$ (or $3$)? We claim that it is not possible.

First note that since $|\gs(X \cap Y)| = 1$, we may write $\gs(X \cap Y) = \langle m_0 \rangle = \{km_0 | k \in \mathbb Z\}$. That is, the gain space of $X \cap Y$ is generated by the gain $m_0 \in \mathbb Z^3$. However, since $X \cap Y$ is a subset of both $X$ and $Y$, each of which also have gain spaces of dimension 1, it follows that $\gs(X) = \gs(Y) = \langle m_0 \rangle$ too. 

The rest of the argument is similar to that in Case a. Reconsidering Figure \ref{fig:caseA2} for this case, we obtain the cycle:  
\[a \underbrace{\xrightarrow{m_1} c_1 \rightarrow}_{k_1m_0} a \underbrace{\rightarrow c1 \xrightarrow{m_2} c_2 \rightarrow}_{k_2m_0} a \underbrace{\rightarrow c_2 \xrightarrow{m_3} b \rightarrow}_{k_3m_0} a \underbrace{\rightarrow b \xrightarrow{m_Y}}_{k_Ym_0} a,\]
where $k_i, k_Y \in \mathbb Z$. Hence the net gain on this cycle is $(k_1 + k_2 + k_3 + k_Y)m_0$, which is clearly contained in $\langle m_0 \rangle$. It is not possible, therefore that $|\gs(X \cup Y)| > 1$. It follows that $|\gs(X \cup Y)| = 1$, and thus $b_H(X \cup Y) = 0$, as desired. \\

\noindent {\bf Case c:} $|\gs(X)| = |\gs(Y)|= 2$.\\
Since $b_H(X) = b_H(Y) = 0$, we have $|X| = 6|V(X)| - 3$ and $|Y| = 6|V(Y)| - 3$. In this case, the fact that $b_H(X \cup Y) = b_H(X \cap Y)$ follows directly from a lemma of Fekete and Szeg\H{o}:
\begin{lem}[\cite{fekete}, Lemma 2.3]
If $H$ is a $[6,3]$-tight graph, then $b_H(X) = b_H(Y) = 0$, and $X \cap Y \neq \emptyset$ implies that $b_H(X \cup Y) = b_H(X \cap Y) = 0$. 
\end{lem}

\noindent {\bf Case d:} $|\gs(X)| = 0, |\gs(Y)| = 1$.\\
In this case, $|X| = 6|V(X)| - 6$, and $|Y| = 6|V(Y) - 4$. It follows that $|\gs(X \cap Y)| \leq |\gs(X)|= 0$, and $|\gs(X \cup Y)| \geq |\gs(Y)| = 1$. Hence $|X\cap Y| \leq 6|V(X\cap Y)| - 6$. 
We have
\begin{equation} 
6|V(X \cup Y)| + 6|V(X \cap Y)| - 10 = |X \cap Y| + |X \cup Y|.
\label{eq:caseD1}
\end{equation}
We claim that $|\gs(X\cup Y)| = 1$. Suppose toward a contradiction that $|\gs(X \cup Y)| = 2$. That is, a new cycle is created with a gain that is not generated by $\langle m_0 \rangle = \gs(Y)$. Then $|V(X \cap Y)| \geq 2$ (for a new cycle to be created), and the intersection is connected. Again we argue along the same lines as Cases a and b. Considering Figure \ref{fig:caseA2}, we write the cycle as:  
\[a \underbrace{\xrightarrow{m_1} c_1 \rightarrow}_{0} a \underbrace{\rightarrow c1 \xrightarrow{m_2} c_2 \rightarrow}_{k_2m_0} a \underbrace{\rightarrow c_2 \xrightarrow{m_3} b \rightarrow}_{0} a \underbrace{\rightarrow b \xrightarrow{m_Y}}_{k_Ym_0} a.\]
Hence the net gain on this cycle is $(k_2 + k_Y)m_0$, which is clearly contained in $\langle m_0 \rangle= \gs(Y)$. It is not possible, therefore that $|\gs(X \cup Y)| > 1$. It follows that $|\gs(X \cup Y)| = 1$, and thus $b_H(X \cup Y) = 0$, as desired. \\

\noindent {\bf Case e:} $|\gs(X)| = 0, |\gs(Y)| = 2$ (or 3).\\
In this case, $|X| = 6|V(X)| - 6$, and $|Y| = 6|V(Y) - 3$. It follows that $|\gs(X \cap Y)| \leq |\gs(X)|= 0$, and $|\gs(X \cup Y)| \geq |\gs(Y)| = 2$ (or 3). Hence $|X\cap Y| \leq 6|V(X\cap Y)| - 6$. 
But since 
\begin{equation} 
6|V(X \cup Y)| + 6|V(X \cap Y)| - 9 = |X \cap Y| + |X \cup Y|,
\label{eq:caseE1}
\end{equation}
it follows that $|X\cap Y| = 6|V(X\cap Y)| - 6$, and $|X\cup Y| = 6|V(X\cup Y)|-3$. Hence $b_H(X \cap Y) = b_H(X\cup Y) = 0$, as desired.\\

\noindent {\bf Case f:} $|\gs(X)| = 1, |\gs(Y)| = 2$ (or 3).\\
In this case, $|X| = 6|V(X)| - 4$, and $|Y| = 6|V(Y) - 3$. It follows that $|\gs(X \cap Y)| \leq |\gs(X)|= 1$, and $|\gs(X \cup Y)| \geq |\gs(Y)| = 2$ (or 3). Hence $|X\cap Y| \leq 6|V(X\cap Y)| - 4$. Again, since
\begin{equation} 
6|V(X \cup Y)| + 6|V(X \cap Y)| - 7 = |X \cap Y| + |X \cup Y|,
\label{eq:caseF1}
\end{equation}
it follows that $|X\cap Y| = 6|V(X\cap Y)| - 4$, and $|X\cup Y| = 6|V(X\cup Y)|-3$. Hence $b_H(X \cap Y) = b_H(X\cup Y) = 0$, as desired.\\
\end{proof}

We also require the following proposition, which essentially says that if subsets $X,Y\subseteq E(H)$ prevent the splitting off of either of two pairs of edges, then so does the subset $X \cup Y$. 
\begin{prop}
Let $\bbog$, $H=(V(H)+s, E(H)+\{e,f,g\})$ be a $[6,3]$-$\Tor^3$-graph, where $e = \{s, u; m_e\}$, $f=\{s, v; m_f\}$, $g=\{s,w; m_g\}$ are edges incident to $s$. Let $X$ and $Y$ be subsets of $E(H)$ such that $u, v \in V(X)$, and $v, w \in V(Y)$. Suppose $h_1 = \{u, v; m_f-m_e\}$ and $h_2 = \{v, w; m_g - m_f\}$ (See Figure \ref{fig:neitherEdge}). If 
\[b_H(X+h_1) = b_H(Y+h_2) = -1,\] then $b_H(X \cup Y+h_1)= b_H(X \cup Y+h_2) = -1$ too.
\label{prop:unionNeg}
\end{prop}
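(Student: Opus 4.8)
The plan is to push everything onto the single edge set $X\cup Y$, exploiting that the deficiency $b_H$ is additive on deficiency-zero sets (Proposition~\ref{prop:union}). Throughout write $\phi(k)=\sum_{i=1}^{k}(3-i)$, so $b_H(Z)=6|V(Z)|-6+\phi(|\gs(Z)|)-|Z|$, and note $\phi(0)=0$, $\phi(1)=2$, $\phi(2)=\phi(3)=3$. First I would unpack the hypothesis $b_H(X+h_1)=-1$. Since $u,v\in V(X)$ we have $V(X+h_1)=V(X)$, so $b_H(X+h_1)=b_H(X)+\bigl(\phi(|\gs(X+h_1)|)-\phi(|\gs(X)|)\bigr)-1$; the bracketed "gain increment" is $\ge 0$ because $\gs(X)\subseteq\gs(X+h_1)$, and $b_H(X)\ge 0$ because $X$ is a non-empty subset of $E(H)$ and $\bbog$ (on the full edge set $E(H)+\{e,f,g\}$) is a \bbgood. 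Hence $b_H(X+h_1)=-1$ forces \emph{both} $b_H(X)=0$ \emph{and} that adding $h_1$ leaves the gain term $\phi(|\gs(\cdot)|)$ unchanged. The identical argument on $f,g$ gives $b_H(Y)=0$ and that adding $h_2$ leaves the gain term of $Y$ unchanged.

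Next I would observe that $b_H(Z)=0$ forces the subgraph $(V(Z),Z)$ to be connected. Indeed, if $Z$ split into vertex-disjoint parts $Z_1,Z_2$, then using $b_H(Z_i)\ge 0$ one gets $b_H(Z)\ge 6+\phi(|\gs(Z)|)-\phi(|\gs(Z_1)|)-\phi(|\gs(Z_2)|)$, and this cannot equal $0$ since it would require $\phi(|\gs(Z_i)|)=3$ for both $i$ (so $|\gs(Z_i)|\ge 2$, hence $|\gs(Z)|\ge 2$) while simultaneously $\phi(|\gs(Z)|)=0$. So $X$ and $Y$ are connected, and since $v\in V(X)\cap V(Y)$ so is $X\cup Y$. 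Proposition~\ref{prop:union} then immediately gives $b_H(X\cup Y)=0$.

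The crux is to transfer the gain-term statement from $X$ to $X\cup Y$. The key is that a fundamental cycle for $h_1$ can be taken \emph{inside $X$}: choose a $u$--$v$ path $P_1$ in the connected graph $X$ and let $\nu_1$ be the net gain of the circuit $P_1+h_1$; then $\gs(X+h_1)=\gs(X)+\langle\nu_1\rangle$, and since $P_1\subseteq X\subseteq X\cup Y$ also $\gs(X\cup Y+h_1)=\gs(X\cup Y)+\langle\nu_1\rangle$ with the \emph{same} $\nu_1$. A short case analysis then finishes it: if $\nu_1\in\gs(X)$ then $\nu_1\in\gs(X\cup Y)$ and the gain term is unchanged; otherwise the conclusion of the first step (no change in $\phi$ for $X$, yet $\dim\gs(X)$ increases) forces $|\gs(X)|=2$, because the successive increments of $\phi$ from dimensions $0,1,2$ are $2,1,0$; then $|\gs(X\cup Y)|\in\{2,3\}$, and in either case adjoining $\nu_1$ to $\gs(X\cup Y)$ leaves $\phi$ fixed (using $\phi(2)=\phi(3)=3$, and that $\gs(X)\subseteq\gs(X\cup Y)$ are equal when both have dimension $2$). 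So $b_H(X\cup Y+h_1)=b_H(X\cup Y)-1=-1$, and the verbatim argument with $(Y,h_2,P_2)$ in place of $(X,h_1,P_1)$ gives $b_H(X\cup Y+h_2)=-1$.

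The step I expect to be the genuine obstacle is the transfer in the third paragraph: "adding the split edge does not change the gain term" is \emph{not} inherited by supersets in general, since a larger edge set can carry a genuinely new cycle gain, so one must use that the witnessing cycle already lives in $X$ — which is why establishing connectivity of $X$ (hence of $X\cup Y$) in the second step is load-bearing, not cosmetic. The only other thing to be careful with is the arithmetic of $\phi$, in particular $\phi(2)=\phi(3)=3$: this is precisely what makes a gain space jumping from dimension $2$ to $3$ invisible to $b_H$, so that all the cases collapse to the same answer.
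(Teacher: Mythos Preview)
Your argument is correct and takes a genuinely different route from the paper. The paper reproduces the six-case analysis of Proposition~\ref{prop:union} (one case for each pair of gain-space dimensions of $X$ and $Y$), and in each case re-runs the cycle-decomposition argument of that proposition to show that adjoining $h_1$ (resp.\ $h_2$) cannot raise $|\gs(X\cup Y)|$. You instead first extract from the hypothesis the two consequences $b_H(X)=b_H(Y)=0$ and $\phi(|\gs(X+h_1)|)=\phi(|\gs(X)|)$ (and likewise for $Y$), then use connectivity to produce a fundamental cycle for $h_1$ that already lies in $X$, so the \emph{same} gain $\nu_1$ witnesses the effect of $h_1$ on both $\gs(X)$ and $\gs(X\cup Y)$. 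This collapses all six cases into the single dichotomy $\nu_1\in\gs(X)$ versus $|\gs(X)|=2$, and in the latter case the identity $\phi(2)=\phi(3)$ does all the work. The payoff is a uniform argument that does not revisit the internals of Proposition~\ref{prop:union}; the paper's approach, by contrast, makes the parallel with Proposition~\ref{prop:union} explicit at the cost of repetition.

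One small correction: your parenthetical that ``$\gs(X)\subseteq\gs(X\cup Y)$ are equal when both have dimension $2$'' is not literally true over $\mathbb Z$ (a rank-$2$ subgroup of $\mathbb Z^3$ can sit properly inside another rank-$2$ subgroup). But you do not need it: once $|\gs(X\cup Y)|\ge 2$ you have $\phi(|\gs(X\cup Y)|)=3=\phi(|\gs(X\cup Y+h_1)|)$ regardless of whether $\nu_1$ lies in $\gs(X\cup Y)$, so the conclusion stands unchanged.
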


\begin{figure}[htbp]
\begin{center}
\includegraphics[width=2.4in]{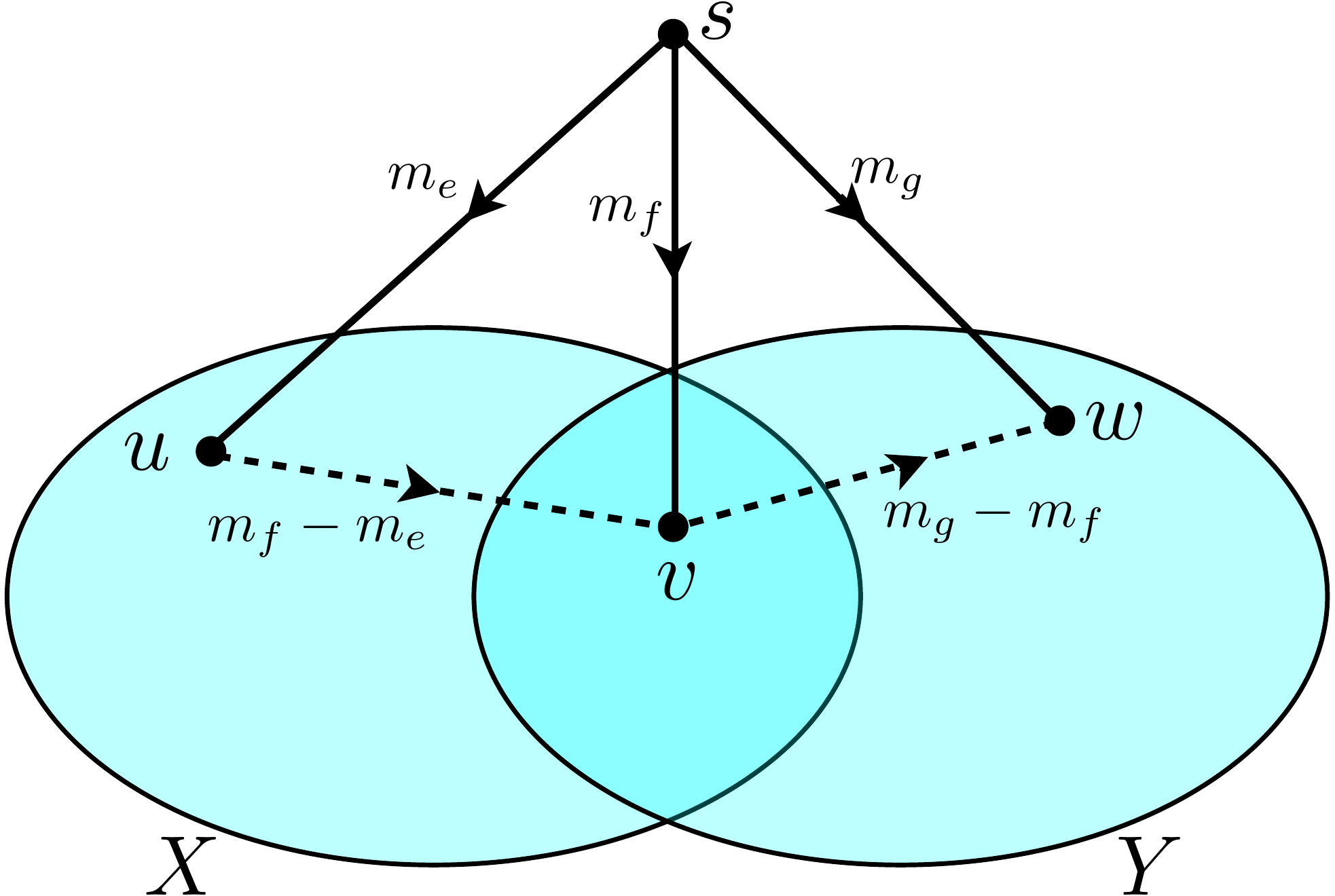}
\caption{Edge sets $X$ and $Y$ prevent the addition of either of the edges $\{u, v; m_f - m_e\}$ or $\{v, w; m_g - m_f\}$. See Proposition \ref{prop:unionNeg}. }
\label{fig:neitherEdge}
\end{center}
\end{figure}

\begin{proof}
The proof of this proposition proceeds along the lines of the proof of Proposition \ref{prop:union}. In particular, we use Cases a -- f. We will prove one case in detail, the remaining cases are easily checked using the same methods.  \\

\noindent {\bf Case d:} $|\gs(X)| = 0, |\gs(Y)| = 1$.\\
What we must confirm is that adding either of the edges $h_1$ or $h_2$ to $X\cup Y$ results in an overbraced subgraph of $H$. In other words, we prove that adding $h_1$ or $h_2$ cannot increase the dimension of the gain space of $X \cup Y$. 

\begin{figure}[htbp]
\begin{center}
\includegraphics[width=2.4in]{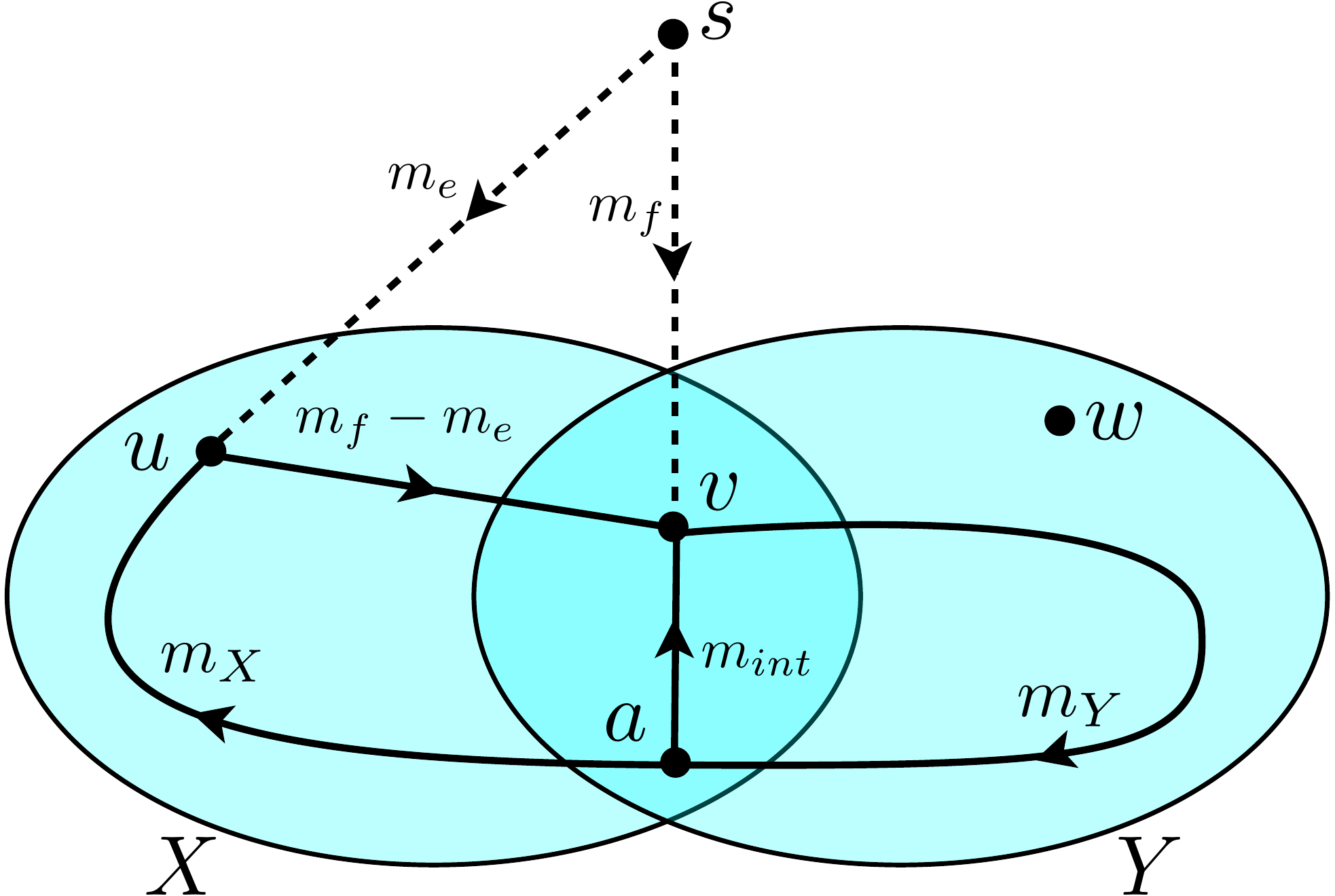}
\caption{Suppose the edge $h_1 = \{u, v; m_f - m_e\}$ participates in a cycle which raises the rank of $X \cup Y +h_1$ (proof of Proposition \ref{prop:unionNeg}).}
\label{fig:addedEdgeCycle}
\end{center}
\end{figure}

From the proof of Proposition \ref{prop:union} we know that $|\gs(X \cup Y)| = 1$. Suppose toward a contradiction, that $|\gs(X \cup Y + h_1)| = 2$. Then it must be the case that the added edge $h_1$ participates in a cycle whose net gain is not contained in $\gs(X \cup Y) = \langle m_0 \rangle$ (see Figure \ref{fig:addedEdgeCycle}). Furthermore, such a cycle must contain edges from both edge sets $X$ and $Y$, and thus $|V(X \cap Y)|\geq 2$.  However, from the proof of Proposition \ref{prop:union} we know that the intersection $X \cap Y$ is connected, and therefore we may break the cycle up into the sum of simple cycles, each of which is contained only in $X$ or only in $Y$ (see Figure \ref{fig:addedEdgeCycle}). But we easily see that such a cycle has net gain $km_0$, $k \in \mathbb Z$, which is the contradiction. 

A similar argument shows that $|\gs(X \cup Y + h_2)| = 1$ too, and therefore
\[b_H(X \cup Y+h_1)= b_H(X \cup Y+h_2) = -1\]
as desired. 
\end{proof}

With Propositions \ref{prop:union} and \ref{prop:unionNeg} in place, the proofs of Theorems \ref{thm:22mod} and \ref{thm:16mod} follow verbatim from the work of Fekete and Szeg\H{o} (see \cite{fekete}, Theorems 2.2 and 1.6 respectively. Lemma \ref{lem:23mod5} and Propositions \ref{prop:union} and \ref{prop:unionNeg} are analogous to Lemma 2.3 in \cite{fekete}). We sketch the principal ideas of both proofs, identifying the areas which require modification. \\

\noindent {\it Proof of Theorem \ref{thm:22mod} (sketch).}\\
First note that if $j = 0$, then $G-s$ must be a \bbgood. So we assume that $j \geq 1$. Toward a contradiction, suppose that we cannot split off $j$ edges so that the resulting graph is a \bbgood. We split off as many edges as possible, say $p<j$ to obtain the graph $\langle H', m' \rangle$. Let $e_1 = \{s, v_1; m_1\}, \dots, e_{\alpha} = \{s, v_{\alpha}; m_{\alpha}\}$ be the non-loop edges incident to $s$ in $\langle H', m' \rangle$. By Lemma \ref{lem:23mod5} we know that for each pair of vertices $v_{\nu}, v_{\mu}$, $1\leq \nu < \mu \leq \alpha$, there exists an edge set $X_{\nu \mu} \subset E(H')$ such that  
\begin{enumerate}[a)]
\item $v_{\nu}, v_{\mu} \in V(X_{\nu \mu})$
\item $b_H(X_{\nu \mu}) = 0$, and 
\item $b_H(X_{\nu \mu} + \{v_{\nu}, v_{\mu}; m_{\mu} - m_{\nu} \}) = -1$.
\end{enumerate}
Using Propositions \ref{prop:union} and \ref{prop:unionNeg} we may find such an edge set $X_{\langle H', m' \rangle}$ that is maximal. 

We consider all such $\langle H', m' \rangle$ obtained by splitting off different sets of $p$ pairs of edges, and we take $\langle \overline{H}, \overline m \rangle$ so that $|E(\overline H)|$ is maximal. Let $X = E\langle \overline H, \overline m \rangle$.

The remainder of the proof exactly follows \cite{fekete}. We claim that there is a split edge $e = \{v, w; m_e\}$ in $\langle \overline H, \overline m \rangle$ which is not in $X$, which is shown using a basic combinatorial argument that we do not reproduce here. 

Fekete and Szeg\H{o} then use this fact to find a pair of edges which form an admissible splitting off, which contradicts the maximality of $p$.  \qed\\

\noindent {\it Proof of Theorem \ref{thm:16mod} (sketch).}\\
The ``if" part of this result is seen from the definition of gain-modified edge pinches. 

To see the other direction, we proceed by induction on the number of vertices. The \bbgood \ with only one vertex is $\langle P_3, m^*\rangle$, which we know is a \bbgood \ by definition. Let $\bbog$ be an arbitrary \bbgood \ with more than one vertex. Then it is not hard to show that there exists a node $s$ with degree at most 11 (since $H$ is $[6,3]$-tight). 

Using a simple combinatorial argument, we find that letting $n = 6 - deg_H(s)$, and $j= n-i_H(s)$ ($i_H(s)$ is the number of loops on $s$), we obtain $n$ and $j$ which satisfy the conditions of Theorem \ref{thm:22mod} (see \cite{fekete} for details). 

Theorem \ref{thm:22mod} demonstrates that $\bbog$ is obtained from a smaller body-bar orbit graph $\langle H', m' \rangle$ by a gain modified edge pinch $K^*(6, n, j)$. By induction, we know that  $\langle H', m' \rangle$ can be constructed from $\langle P_3, m^*\rangle$, and hence $\bbog$ can be too. \qed

\subsection{Proof of main result (Theorem \ref{thm:main})}
Theorem \ref{thm:dNecessary} establishes the necessity of the two conditions for generic rigidity. Suppose $\bbog$ satisfies conditions 1 and 2 of our main theorem. By Theorem \ref{thm:16mod}, it can be built up from a single vertex with three looks by a sequence of gain-modified edge pinches. By Theorem \ref{thm:bbPinches}, these gain-modified edge pinches preserve generic rigidity on $\Tor^3$, which completes the proof. \qed. 

%FURTHER WORK
\section{Further work}
As mentioned in the introduction, it was previously conjectured that the results of the present paper hold for $d$-dimensional body-bar orbit frameworks \cite{myThesis}. That is, we claim 
\begin{conj}
$\bbog$ is a generically minimally rigid body-bar framework on $\Tor^d$ if and only if 
\begin{enumerate}
	\item $|E(H)| = {d+1 \choose 2}|V(H)| - d$
	\item for all non-empty subsets $Y \subset E(H)$ of edges
		\begin{equation}
			|Y| \leq {d+1 \choose 2}|V(Y)| - {d+1 \choose 2} + \sum_{i=1}^{|\gs(Y)|}(d-i).
		\label{eq:gainSparseConj}
		\end{equation}	
\end{enumerate}
\label{conj:main}
\end{conj}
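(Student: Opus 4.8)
The plan is to run the three-part argument that proves Theorem~\ref{thm:main}, now with $\binom{d+1}{2}$ in place of $6$, $d$ in place of $3$, and base graph $P_{\binom{d+1}{2}-d} = P_{\binom{d}{2}}$ in place of $P_3$ (the cases $d=1,2$ already follow from the bar-joint characterizations in \cite{myThesis}, so the content is for $d\ge 3$). Necessity is immediate and is essentially already recorded: pass from $\bbog$ to its induced bar-joint periodic orbit graph $\pogind$ via Proposition~\ref{prop:bbToBarJointComb}, observe that this operation leaves the gain space unchanged, and apply Theorem~\ref{thm:dNecessary}; the resulting inequality is exactly~(\ref{eq:gainSparseConj}).

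For sufficiency, consider first the combinatorial step, the $d$-dimensional analogue of Theorems~\ref{thm:22mod} and~\ref{thm:16mod}. Set
\[
  b_H(Y) = \binom{d+1}{2}|V(Y)| - \binom{d+1}{2} + \sum_{i=1}^{|\gs(Y)|}(d-i) - |Y| ,
\]
so that $\bbog$ satisfies conditions 1 and 2 precisely when $b_H(E(H))=0$ and $b_H(Y)\ge 0$ for all $\emptyset\ne Y\subseteq E(H)$. The engine is the pair of statements that the tight sets $\{Y : b_H(Y)=0\}$ are union-closed (Proposition~\ref{prop:union}) and that they obstruct admissible splittings coherently (Proposition~\ref{prop:unionNeg}). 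Their proofs amount to a finite case analysis over the value of $(|\gs(X)|,|\gs(Y)|)$, of which there are $\binom{d+1}{2}$ essentially distinct ones once one collapses $|\gs|=d-1$ with $|\gs|=d$ — legitimate because $\sum_{i=1}^{d-1}(d-i)=\sum_{i=1}^{d}(d-i)=\binom{d}{2}$, just as cases $(2)$ and $(3)$ collapsed for $d=3$. In each case one derives from $b_H(X)=b_H(Y)=0$ a lower bound on $|X\cap Y|$ that forces $X\cap Y$ to be connected and to span the bridging vertices; the cycle-decomposition argument of the $d=3$ proof then yields $\gs(X\cup Y)=\gs(X)+\gs(Y)$, and the ranks combine as required. (Note that $Y\mapsto|\gs(Y)|$ is not submodular, so it is precisely this forced connectivity, not abstract matroid theory, that drives the argument.) With Propositions~\ref{prop:union} and~\ref{prop:unionNeg} available, Theorems~\ref{thm:22mod} and~\ref{thm:16mod} follow word-for-word from \cite{fekete} with $[k,\ell]=[\binom{d+1}{2},d]$: every graph satisfying conditions 1 and 2 can be built from $\langle P_{\binom{d}{2}}, m^*\rangle$ (with $|\gs(P_{\binom{d}{2}})|\ge d-1$, readily realized, e.g.\ by loop gains $\be_1,\be_1,\be_2,\be_2,\dots$) by a sequence of gain-modified edge pinches $K^*(\binom{d+1}{2},n,j)$ with $j\le n\le \binom{d+1}{2}-1$ and $n-j\le\binom{d}{2}$.

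The geometric step — the $d$-dimensional version of Theorem~\ref{thm:bbPinches}, that such an edge pinch preserves generic rigidity of body-bar periodic orbit frameworks on $\Tor^d$ — carries the real weight. As when $d=3$ it reduces to two pieces. For a general pinch $K^*(\binom{d+1}{2},n,j)$ with $1\le j$, one places the new body inside a single translate of the unit cell with $j$ of its attaching edges running along the lines of the $j$ orbits of edges to be pinched (possible since generic points are dense) and then applies Proposition~\ref{prop:triangleReplacement} to each pinched pair in the derived framework; this leaves only the body-addition case $j=0$ (attach a new body with $c=n\le\binom{d}{2}$ loops and $\binom{d+1}{2}-c$ ordinary edges). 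For body addition one works in the induced bar-joint framework on $\Tor^d$: a body is a generic isostatic block, and the $\binom{d+1}{2}-c$ ordinary edges together with the $c$ loops (whose gains have rank $c$ and avoid degenerate geometry by condition~(ii) of the gain-modified pinch) remove exactly $\binom{d+1}{2}$ degrees of freedom from the new block, leaving the enlarged framework rigid on $\Tor^d$. This can be established either by a direct genericity/gluing argument, or, as in Table~\ref{tab:bodyAddition} for $d=3$, by exhibiting for each of the $\binom{d}{2}+1$ values of $c$ a short sequence of periodic bar-joint vertex additions, edge splits and triangle exchanges; crucially, the bar-joint moves needed here only ever glue an isostatic block onto an already-rigid framework, which sidesteps the genuinely hard open problem about Henneberg-type moves for general bar-joint rigidity in dimension $\ge 3$.

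The main obstacle is exactly this body-addition step in arbitrary dimension: it requires developing $d$-dimensional periodic bar-joint inductive constructions generalizing Section~\ref{sec:barJointInductions}, and checking that for every $0\le c\le\binom{d}{2}$ there is a valid loop-gain assignment — one in which every sub-multiset of the $c$ loops satisfies~(\ref{eq:gainSparseConj}) — which is a short combinatorial lemma. Once that lemma and the body-addition proposition are in hand, the remainder of the argument is a routine transcription of the $d=3$ proof.
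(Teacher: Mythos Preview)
This statement is Conjecture~\ref{conj:main} in the paper: it has no proof there, and the paper says immediately after stating it that ``the approach of the present paper does not immediately extend to this more general conjecture, since we have taken cases in the proofs of Propositions~\ref{prop:union} and~\ref{prop:unionNeg}.'' There is thus no proof in the paper to compare your attempt against.

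What you have written is a programme, not a proof. You correctly identify the three pieces (necessity via Theorem~\ref{thm:dNecessary}; a $d$-dimensional Fekete--Szeg\H{o} reduction driven by analogues of Propositions~\ref{prop:union} and~\ref{prop:unionNeg}; rigidity preservation under gain-modified pinches, reducing to body addition), and your outline is precisely the route the paper's own discussion points toward. But the two load-bearing steps are only asserted. For the union-closure of tight sets you write that ``in each case one derives \dots\ a lower bound on $|X\cap Y|$ that forces $X\cap Y$ to be connected'' and then the cycle decomposition finishes; this is exactly the case analysis the author flags as the obstruction, and you have not carried it out --- the number of $(|\gs(X)|,|\gs(Y)|,|\gs(X\cap Y)|)$ sub-cases grows with $d$, and each needs its own numerical verification of connectivity and of $\gs(X\cup Y)\subseteq\gs(X)+\gs(Y)$ before the ranks ``combine as required.'' For body addition, Table~\ref{tab:bodyAddition} exhibits explicit three-move bar-joint sequences for each of the four $d=3$ targets; you propose ``a direct genericity/gluing argument'' or an analogous table for general $d$ together with a ``short combinatorial lemma'' on loop-gain realizability, but supply none of these. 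Until those steps are actually executed (or replaced by a uniform argument that avoids the case split), the conjecture remains open and your proposal remains a plausible plan of attack rather than a proof.
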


It is evident that the approach of the present paper does not immediately extend to this more general conjecture, since we have taken cases in the proofs of Propositions \ref{prop:union} and \ref{prop:unionNeg}, which are central to the proof of the main result. It is possible that some other method may prove useful in establishing the relevant results. Another possibility is that we abandon the approach of Fekete and Szeg\H{o}, and prove the result using an entirely different technique. In any case, we believe that the established result is a promising indication that the conjecture is true. 

It is possible to check that the expression (\ref{eq:gainSparseConj}) gives rise to a submodular function on the set of oriented edges labeled by elements of $\mathbb Z^d$. It then follows that the body-bar periodic orbit graphs $\bbog$ satisfying (\ref{eq:gainSparseConj}) form the independent sets of a matroid. We omit the details.

Other areas for further research may be the extension of these results to body-hinge frameworks, which provide yet another tool for studying molecular frameworks. The recent proof of the molecular conjecture \cite{MolecularConj} provides further motivation for this avenue of research. Finally, the development of algorithms, based on the pebble game algorithm \cite{pebbleGame} for checking whether a given body-bar orbit graph satisfies the sparsity condition of Theorem \ref{thm:main} would be of interest. 

\subsection{Tubes and slabs}
The paper of Walter Whiteley in this volume \cite{WWtubesSlabs} addresses the question of ``fragments" of periodic frameworks. That is, he considers finite pieces of periodic structures. As part of this set of questions he describes two types of fragments: {\it tubes} and {\it slabs}. Tubes are three-dimensional structures which are periodic in one direction. Slabs are three-dimensional structures which are periodic in two dimensions. 

From the result presented in this paper, it is possible to obtain extensions to tubes and slabs which are not fragments. Tubes can be seen to correspond to periodic frameworks $\bbog$ with $|\gs(E)|=1$, and slabs correspond to those frameworks with $|\gs(E)|=2$. Adapting the present results to these contexts is the subject of a forthcoming joint paper.  \\

\noindent {\it Acknowledgements.} The majority of this research was carried out at the Fields Institute for Research in Mathematical Sciences in Toronto, Canada. The author also wishes to thank Bernd Schulze and Walter Whiteley for their feedback.

%
%\bibliographystyle{abbrv} 
%\bibliography{royalSociety}

\end{document}